\theoremstyle{plain}
\newtheorem{theorem}{Theorem}[section]
\newtheorem{proposition}[theorem]{Proposition}
\newtheorem{lemma}[theorem]{Lemma}
\newtheorem{corollary}[theorem]{Corollary}
\theoremstyle{definition}
\newtheorem{definition}[theorem]{Definition}
\newtheorem{example}[theorem]{Example}
\newtheorem{remark}[theorem]{Remark}
\numberwithin{equation}{section}
\DeclareMathOperator{\Shape}{Shape}
\DeclareMathOperator{\Res}{Res}
\newcommand{\half}{\frac{1}{2}}
\renewcommand{\ge}{\geqslant}
\renewcommand{\ge}{\geqslant}
\renewcommand{\le}{\leqslant}
\renewcommand{\unrhd}{\trianglerighteqslant}
\begin{document}
\title[Partition Algebras]
{A Seminormal form for Partition Algebras}

\author[J. Enyang]{John Enyang}
\address{Department of Mathematics and Statistics, University of Melbourne, Parkville VIC 3010, Australia}
\curraddr{School of Mathematics and Statistics F07, University of Sydney, NSW 2006, Australia}
\email{John.Enyang@sydney.edu.au}

%\subjclass{Primary 47A15; Secondary 46A32, 47D20}

\keywords{Partition algebra; cellular algebra; Jucys--Murphy element; seminormal form; Gram determinants.}

\begin{abstract}
Using a new presentation for partition algebras (J.~Algebraic Combin.~37(3):401--454, 2013), we derive explicit combinatorial formulae for the seminormal representations of the partition algebras. These results generalise to the partition algebras the classical formulae given by Young for the symmetric group. 
\end{abstract}

\thanks{The author is grateful to Arun Ram for several helpful conversations related to this research, and to Fred Goodman for his collaboration in~\cite{EG:2012}. The author is also indebted to Andrew Mathas, Chris Bowman and Ge Li for comments on a previous version of this article, and to the two anonymous referees for their suggestions. This research was supported by the Australian Research Council (grant ARC DP--0986774) at the University of Melbourne and (grants ARC DP--0986349, ARC DP--110103451) at the University of Sydney.}

\maketitle

%\setcounter{tocdepth}{1}
%\tableofcontents

\section{Introduction}
The partition algebras $A_k(n)$, for $k,n\in\mathbb{Z}_{\ge0},$ are a family of algebras defined in the work of Martin~\cite{MR1103994, MR1265453, MR1768036} and Jones~\cite{MR1317365} in connection with the Potts model and higher dimensional statistical mechanics. Jones~\cite{MR1317365} showed that the even partition algebra $A_{2k}(n)$ is in Schur--Weyl duality with the symmetric group $\mathfrak{S}_n$ acting diagonally on the $k$--fold tensor product $V^{\otimes k}$ of its $n$--dimensional permutation representation $V$. The odd partition algebra $A_{2k-1}(n)$ was defined by Martin~\cite{MR1768036} as the centraliser of the subgroup $\mathfrak{S}_{n-1}\subseteq \mathfrak{S}_n$ acting on $V^{\otimes k}$. Including the  algebras $A_{2k-1}(n)$ in the tower 
\begin{align}\label{tower}
A_0(n)\subseteq A_1(n)\subseteq A_{2}(n)\subseteq \cdots 
\end{align}
allowed for the simultaneous analysis of the whole tower of algebras~\eqref{tower} using the Jones Basic construction by Martin~\cite{MR1768036} and Halverson and Ram~\cite{MR2143201}. The partition algebras have connections to Deligne's category $\underline{\rm{Re}}{\rm{p}}(S_t)$~\cite{MR2737787}, and are important examples of cellular algebras~\cite{MR1711582, MR1779601, MR2794027}. 

Halverson and Ram~\cite{MR2143201} used Schur--Weyl duality to show that a certain family of diagrammatically defined elements in the partition algebras commute and play an analogous role to the Jucys--Murphy elements in the group algebra of the symmetric group. Following Mathas~\cite[Sect.~3]{MR2414949}, a seminormal form for $A_k(n)$ may be defined as an irreducible matrix representation of $A_k(n)$ over $\mathbb{Q}$, relative to a basis of eigenvectors for the Jucys--Murphy family in $A_k(n)$. Seminormal forms appeared classically in Young's construction~\cite{Yo:1932} of irreducible representations for the symmetric group $\mathfrak{S}_k$ over $\mathbb{Q}$; a modern treatment of Young's construction realises a seminormal form for $\mathfrak{S}_k$ as an irreducible representation of $\mathfrak{S}_k$ by matrices over $\mathbb{Q}$, relative to a basis of eigenvectors for the Jucys--Murphy elements in $\mathbb{Q}\mathfrak{S}_k$ (see, for example~\cite{MR1444315, MR1711316, MR2165457, MR2050688}). 

Using the presentation for the partition algebras in~\cite{MR2143201, MR2811310}, Kosuda~\cite{MR2583260} has constructed seminormal representations for the partition algebra $A_{6}(n)$. The representations of the subalgebra of the partition algebra $A_{2k}(n)$ that acts as centraliser of $G(r,1,n)$ on the tensor space $V^{\otimes k}$, for $n\ge k$ and $r>k$, have been constructed by  Kosuda~\cite{MR2262344}. 

This paper provides explicit combinatorial formulae for the seminormal representations of each algebra $A_k(n)$ in the tower~\eqref{tower}, thereby solving a problem which was highlighted in~\cite{MR2143201}. For the representations of $A_k(n)$ which factor through the symmetric group, the formulae we derive coincide with those given by Young~\cite{Yo:1932}. The approach to the seminormal representations of the partition algebras in this paper exploits Graham and Lehrer's machinery of cellular algebras~\cite{MR1376244} and the interaction between cellularity and  Jucys--Murphy families in the work of  Mathas~\cite{MR2414949} and Goodman and Graber~\cite{MR2774622}. The main innovation in this paper therefore, is the application of the presentation of partition algebras in~\cite{MR3035512} to computing the seminormal representations. We remark that although it is possible to construct seminormal representations of the diagram algebras without appeal to the machinery of cellular algebras (see, for example, the work of Nazarov~\cite{MR1398116} and Leduc and Ram~\cite{MR1427801} on Brauer algebras), by using a Murphy type cellular basis~\cite{EG:2012}, and exploiting the inner product on cell modules, it is possible to derive explicit rational expressions for the off-diagonal structure constants associated with the contraction elements in the partition algebras (see Theorems~\ref{contractioneven} and~\ref{contractionodd}).   A Young orthogonal form has been given for the Brauer algebras by Nazarov~\cite{MR1398116}, for the BMW and Brauer algebras by Leduc and Ram~\cite{MR1427801}, and for cyclotomic Nazarov--Wenzl algebras by  Ariki, Mathas and Rui~\cite{MR2235339}. Thus, up to a normalisation of seminormal basis vectors, the main construction in this paper (see Sect.~\ref{m-r}) provides a partition algebra analogue to the results of~\cite{MR1398116, MR1427801, MR2235339}.

Nazarov introduced a remarkable recursion for special central elements in the Brauer algebras and established the relation between these central elements and seminormal representations ~\cite[Corollary~3.10, Proposition~4.2]{MR1398116}. A similar relation was established by Beliakova and Blanchet for the BMW algebras~\cite[Lemma~7.4]{MR1866492}. In Sect.~\ref{c-e-r-s}, we obtain analogous recursions for central elements in the partition algebras and explain the relation between these central elements and the seminormal representations of the partition algebras. 

A new recursion formula for the Gram determinant of the symmetric bilinear form~\cite{MR1376244} on cell modules of the partition algebras is derived in Sect.~\ref{det-proof}. This recursion for Gram determinants, which is analogous to the branching rule for discriminants associated with cell modules of Brauer algebras given by Rui and Si~\cite{MR2369064}, is used to derive the off-diagonal structure constants for the contraction elements in the partition algebras (see Propositions~\ref{offdiag:1} and~\ref{offdiag:2}). 

This paper concludes with tables of representing matrices for generators of the partition algebra $A_k(n)$ relative to seminormal bases for cell modules for $A_k(n)$, up to $k=6$.

\section{Preliminaries}\label{p-r-s}
\subsection{Combinatorics}\label{c-t-s}
Let $k$ denote a non--negative integer and $\mathfrak{S}_k$ be the symmetric group acting on ${\lbrace}1,\dots,k{\rbrace}$ on the right. For $i$ an integer, $1\le i<k$, let $s_i$ denote the transposition $(i,i+1)$. Then $\mathfrak{S}_k$ is presented as a Coxeter group by generators $s_1,s_2,\dots,s_{k-1}$, with the relations 
\begin{align*}
&s_i^2=1,&&\text{for $i=1,\ldots,k-1,$}\\
&s_is_j=s_js_i,&&\text{for $|i-j|\ge 2$.}\\
&s_is_{i+1}s_i=s_{i+1}s_is_{i+1},&&\text{for $i=1,\ldots,k-2$.}
\end{align*}
For $1\le i\le j\le k$, let
\begin{align*}
w_{i,j}=s_is_{i+1}\cdots s_{j-1}=(j,j-1,\ldots,i),
\end{align*}
and let $w_{j,i}=w_{i,j}^{-1}$. If $i=0$ or $j=0$, we define $w_{i,j}=1$. 

If $k\ge 0$, a \emph{partition} of $k$ is a weakly decreasing sequence $\lambda=(\lambda_1,\lambda_2,\dots)$ of non-negative integers such that $\sum_{r\ge 1}\lambda_r=k$. We denote by $\emptyset$ the unique partition of $0$. The notation $\lambda\vdash k$ indicates that $\lambda$ is a partition of $k$. If $\lambda$ is a partition, we will also write $|\lambda|=\sum_{r\ge 1}\lambda_r$. 

The \emph{diagram} of a partition $\lambda$ is the set
\begin{align*}
[\lambda]={\lbrace}(i,j)\,|\,\text{$\lambda_i\ge j\ge1$ and $i\ge
1$}\,{\rbrace}\subseteq \mathbb{N}\times\mathbb{N}.
\end{align*}
The elements of $[\lambda]$ are the \emph{nodes} of $\lambda$ and more generally a node is a pair $(i,j)\in\mathbb{N}\times\mathbb{N}$. The diagram $[\lambda]$ is traditionally represented as an array of boxes with $\lambda_i$ boxes on the $i$--th row. For example, if $\lambda=(3,2)$, then $[\lambda]=\text{\tiny\Yvcentermath1$\yng(3,2)$}$\,. We will usually identify the partition $\lambda$ with its diagram and write $\lambda$ in place of $[\lambda]$.

Let $\lambda$ be  a partition. A node $(i,j)$ is an \emph{addable} node of $\lambda$ if $(i,j)\not\in\lambda$ and 
$\mu=\lambda \cup{\lbrace}(i,j){\rbrace}$ is a partition; in this case $(i,j)$ is also referred to as a \emph{removable} node of $\mu$. We let $A(\lambda)$ and $R(\lambda)$ respectively denote the set of addable nodes and removable nodes of $\lambda$.

The dominance $\unrhd$ on partitions of $k$ is defined as follows: if $\lambda\vdash k$ and $\mu\vdash k$, then $\lambda\unrhd\mu$ if
\begin{align*}
\textstyle\sum_{r=1}^j\lambda_r\ge\sum_{r=1}^j\mu_r \qquad\text{for all $j\ge 1$.}
\end{align*}
We write $\lambda\rhd\mu$ to mean that $\lambda\unrhd\mu$ and $\lambda\ne\mu$. 

Let $\lambda\vdash k$. A $\lambda$--tableau $\mathfrak{t}$ is a bijective map between the nodes of  $\lambda$ and the integers $\{1,2,\dots,k\}$.
A  $\lambda$--tableau  can be represented by labelling the nodes of $\lambda$ with the integers $1,2,\dots,k$. For example, if $k=6$ and $\lambda=(3,2,1)$,
\begin{align}\label{tabex0.0}
\mathfrak{t}=\text{\tiny\Yvcentermath1$\young(146,23,5)$}
\end{align}
represents a $\lambda$--tableau. 
If $\lambda\vdash k$, let $\mathfrak{t}^\lambda$ denote the
$\lambda$--tableau in which $1,2,\dots,k$ are
entered in increasing order from left to right along the rows of
$[\lambda]$. Thus in the previous example where $k=6$ and
$\lambda=(3,2,1)$,
\begin{align}\label{tabex1}
\mathfrak{t}^\lambda=\text{\tiny\Yvcentermath1$\young(123,45,6)$}\,.
\end{align}
The tableau $\mathfrak{t}^\lambda$ is the \emph{row reading tableau} of shape $\lambda$. 
The symmetric group $\mathfrak{S}_k$ acts on the set of
$\lambda$--tableaux on the right by permuting
the integer labels of the nodes of $\lambda$. For example,
\begin{align*}
\text{\tiny\Yvcentermath1$\young(123,45,6)$}\,\text{\small$(2,4)(3,6,5)$}\,
=\text{\tiny\Yvcentermath1$\young(146,23,5)$} \,.
\end{align*}
If $\lambda\vdash k$, the \emph{Young subgroup} $\mathfrak{S}_\lambda$ is defined to be the
row stabiliser of $\mathfrak{t}^\lambda$ in $\mathfrak{S}_{k}$.
For instance, when $k=6$ and $\lambda=(3,2,1)$, as
in~\eqref{tabex1}, then $\mathfrak{S}_\lambda=\langle
s_1,s_2,s_4\rangle$. 
\subsection{Partition algebras}\label{p-a-s}
For $k=1,2,\ldots,$ define 
\begin{align*}
{P}_{2k}&={\lbrace}\text{set partitions of ${\lbrace}1,2,\dots,k,1',2',\dots,k'{\rbrace}$}{\rbrace},\qquad\text{and,}\\
{P}_{2k-1}&={\lbrace}d\in P_{2k}\mid\text{$k$ and $k'$ are in the same block of $d$}{\rbrace}.
\end{align*}
Any element $\rho\in{P}_{2k}$ may be represented as a graph with $k$ vertices in the top row, labelled from left to right, by $1,2,\dots,k$ and $k$ vertices in the bottom row, labelled, from left to right by $1',2',\dots,k'$, with vertex $i$ joined to vertex $j$ if $i$ and $j$ belong to the same block of $\rho$. The representation of a partition by a diagram is not unique; for example the partition 
\begin{align*}
\rho={\lbrace}{\lbrace}1,1',3,4',5',6{\rbrace},{\lbrace}2,2',3',4,5,6'{\rbrace}{\rbrace}\in P_{12}
\end{align*}
can be represented by the diagrams:
\begin{align*}
\begin{matrix}
\begin{tikzpicture}[scale=0.9]
\node at (-7.1,-0.45) {$\displaystyle \rho= \color{black}$};
\filldraw [black] (-6.6,0) circle (1.2pt);
\filldraw [black] (-6.6,-0.8) circle (1.2pt);
\filldraw [black] (-5.8,0) circle (1.2pt);
\filldraw [black] (-5.8,-0.8) circle (1.2pt);
\filldraw [black] (-5.0,0) circle (1.2pt);
\filldraw [black] (-5.0,-0.8) circle (1.2pt);
\filldraw [black] (-4.2,0) circle (1.2pt);
\filldraw [black] (-4.2,-0.8) circle (1.2pt);
\filldraw [black] (-3.4,0) circle (1.2pt);
\filldraw [black] (-3.4,-0.8) circle (1.2pt);
\filldraw [black] (-2.6,-0.0) circle (1.2pt);
\filldraw [black] (-2.6,-0.8) circle (1.2pt);
\draw (-6.6,0) -- (-6.6,-0.8);
\draw (-5.8,0) -- (-5.8,-0.8);
\draw (-5.8,-0.8) -- (-5.0,-0.8);
\draw (-6.6,0) .. controls (-6.0,-0.3) and (-5.6,-0.3) .. (-5.0,0.0);
\draw (-5.0,-0.0) -- (-4.2,-0.8);
\draw (-5.0,-0.8) -- (-4.2,-0.0);
\draw (-3.4,-0.0) -- (-4.2,-0.0);
\draw (-3.4,-0.8) -- (-4.2,-0.8);
\draw (-2.6,-0.8) -- (-3.4,-0.0);
\draw (-2.6,-0.0) -- (-3.4,-0.8);
\node at (-1.3,-0.40) {$\displaystyle \text{and} \color{black}$};
\node at (0.1,-0.45) {$\displaystyle \rho= \color{black}$};
\filldraw [black] (0.6,0) circle (1.2pt);
\filldraw [black] (0.6,-0.8) circle (1.2pt);
\filldraw [black] (1.4,0) circle (1.2pt);
\filldraw [black] (1.4,-0.8) circle (1.2pt);
\filldraw [black] (2.2,0) circle (1.2pt);
\filldraw [black] (2.2,-0.8) circle (1.2pt);
\filldraw [black] (3.0,0.0) circle (1.2pt);
\filldraw [black] (3.0,-0.8) circle (1.2pt);
\filldraw [black] (3.8,0.0) circle (1.2pt);
\filldraw [black] (3.8,-0.8) circle (1.2pt);
\filldraw [black] (4.6,0) circle (1.2pt);
\filldraw [black] (4.6,-0.8) circle (1.2pt);
\draw (0.6,0) -- (0.6,-0.8);
\draw (0.6,-0.8) -- (2.2,-0.0);
\draw (1.4,0) -- (1.4,-0.8);
\draw (1.4,0) -- (2.2,-0.8);
\draw (1.4,0) .. controls (2.0,-0.3) and (2.4,-0.3) .. (3.0,-0.0);
\draw (2.2,-0.0) -- (3.0,-0.8);
\draw (3.0,-0.8) -- (3.8,-0.8);
\draw (3.0,-0.8) -- (4.6,-0.0);
\draw (3.8,-0.8) -- (4.6,-0.0);
\draw (3.0,-0.0) -- (4.6,-0.8);
\draw (3.8,-0.0) -- (4.6,-0.8);
\end{tikzpicture}
\end{matrix}\ .
\end{align*}
If $\rho_1,\rho_2\in{P}_{2k}$, then the composition $\rho_1\circ\rho_2$ is the partition obtained by placing $\rho_1$ above $\rho_2$ and identifying each vertex in the bottom row of $\rho_1$ with the corresponding vertex in the top row of $\rho_2$ and deleting any components of the resulting diagram which contains only elements from the middle row. The composition product makes $P_{2k}$ into an associative monoid with identity
\begin{align*}
\begin{matrix}
\begin{tikzpicture}
\node at (-7.4,-0.45) {$\displaystyle 1= \color{black}$};
\node at (-5.35,-0.45) {$\displaystyle \cdots \color{black}$};
\draw (-6.8,0) -- (-6.8,-0.8);
\draw (-6,0) -- (-6,-0.8);
\draw (-4.7,0) -- (-4.7,-0.8);
\draw (-3.9,0) -- (-3.9,-0.8);
\filldraw [black] (-6.8,0) circle (1.2pt);
\filldraw [black] (-6.8,-0.8) circle (1.2pt);
\filldraw [black] (-6.0,0) circle (1.2pt);
\filldraw [black] (-6.0,-0.8) circle (1.2pt);
\filldraw [black] (-4.7,0) circle (1.2pt);
\filldraw [black] (-4.7,-0.8) circle (1.2pt);
\filldraw [black] (-3.9,0) circle (1.2pt);
\filldraw [black] (-3.9,-0.8) circle (1.2pt);
\end{tikzpicture}
\end{matrix}\ .
\end{align*}
Let $z$ be an indeterminate and $R=\mathbb{Z}[z]$. The partition algebra $\mathcal{A}_{2k}=\mathcal{A}_{2k}(z)$ is the $R$--module freely generated by $P_{2k}$, equipped with the product 
\begin{align*}
\rho_1\rho_2=z^{c}\rho_1\circ\rho_2,&&\text{for $\rho_1,\rho_2\in{P}_{2k}$,}
\end{align*}
where $c$ is the number of blocks removed from the middle row in constructing the composition $\rho_1\circ\rho_2$. Let $\mathcal{A}_{2k-1}=\mathcal{A}_{2k-1}(z)$ denote the subalgebra of $\mathcal{A}_{2k}$ generated by $P_{2k-1}$.  Halverson and Ram~\cite[Theorem~1.11]{MR2143201} and East~\cite[Theorem~36]{MR2811310} give a presentation for $\mathcal{A}_{2k}$ in terms of generators $e_1,e_2, \ldots,e_{2k-1},s_1, s_2,\ldots, s_{k-1}$, where 
\begin{align*}
\begin{matrix}
\begin{tikzpicture}
\node at (-7.6,-0.45) {$\displaystyle e_{2i-1}= \color{black}$};
\node at (-6.3,-0.45) {$\displaystyle \cdots \color{black}$};
\draw (-6.8,0) -- (-6.8,-0.8);
\draw (-5.8,0) -- (-5.8,-0.8);
\draw (-4.2,0) -- (-4.2,-0.8);
\draw (-3.2,0) -- (-3.2,-0.8);
\filldraw [black] (-6.8,0) circle (1.2pt);
\filldraw [black] (-6.8,-0.8) circle (1.2pt);
\filldraw [black] (-5.8,0) circle (1.2pt);
\filldraw [black] (-5.8,-0.8) circle (1.2pt);
\filldraw [black] (-5.0,0) circle (1.2pt);
\filldraw [black] (-5.0,-0.8) circle (1.2pt);
\filldraw [black] (-4.2,0) circle (1.2pt);
\filldraw [black] (-4.2,-0.8) circle (1.2pt);
\filldraw [black] (-3.2,0) circle (1.2pt);
\filldraw [black] (-3.2,-0.8) circle (1.2pt);
\node at (-3.7,-0.45) {$\displaystyle \cdots \color{black}$};
\node at (-6.8,-1.1) {$\displaystyle 1 \color{black}$};
\node at (-5.0,-1.1) {$\displaystyle i \color{black}$};
\node at (-3.2,-1.1) {$\displaystyle k \color{black}$};
\node at (-1.9,-0.45) {$\displaystyle \text{and} \color{black}$};
\node at (-0.0,-0.45) {$\displaystyle e_{2i}= \color{black}$};
\node at (1.1,-0.45) {$\displaystyle \cdots \color{black}$};
\draw (0.6,0) -- (0.6,-0.8);
\draw (1.6,0) -- (1.6,-0.8);
\draw (2.4,0) -- (3.2,0);
\draw (2.4,0) -- (2.4,-0.8);
\draw (2.4,-0.8) -- (3.2,-0.8);
\draw (3.2,0) -- (3.2,-0.8);
\draw (4.0,0) -- (4.0,-0.8);
\draw (5.0,0) -- (5.0,-0.8);
\filldraw [black] (0.6,0) circle (1.2pt);
\filldraw [black] (0.6,-0.8) circle (1.2pt);
\filldraw [black] (1.6,0) circle (1.2pt);
\filldraw [black] (1.6,-0.8) circle (1.2pt);
\filldraw [black] (2.4,0) circle (1.2pt);
\filldraw [black] (2.4,-0.8) circle (1.2pt);
\filldraw [black] (3.2,0) circle (1.2pt);
\filldraw [black] (3.2,-0.8) circle (1.2pt);
\filldraw [black] (4,0) circle (1.2pt);
\filldraw [black] (4,-0.8) circle (1.2pt);
\filldraw [black] (5,0) circle (1.2pt);
\filldraw [black] (5,-0.8) circle (1.2pt);
\node at (4.5,-0.45) {$\displaystyle \cdots \color{black}$};
\node at (0.6,-1.1) {$\displaystyle 1 \color{black}$};
\node at (2.4,-1.1) {$\displaystyle i \color{black}$};
\node at (3.3,-1.1) {$\displaystyle i+1 \color{black}$};
\node at (5,-1.1) {$\displaystyle k \color{black}$};
\end{tikzpicture}
\end{matrix}\ ,
\end{align*}
and
\begin{align*}
\begin{matrix}
\begin{tikzpicture}
\node at (-7.4,-0.5) {$\displaystyle {s_i}= \color{black}$};
\node at (-6.3,-0.45) {$\displaystyle \cdots \color{black}$};
\draw (-6.8,0) -- (-6.8,-0.8);
\draw (-5.8,0) -- (-5.8,-0.8);
\draw (-5,0) -- (-4.2,-0.8);
\draw (-5,-0.8) -- (-4.2,-0.0);
\draw (-3.4,0) -- (-3.4,-0.8);
\draw (-2.4,0) -- (-2.4,-0.8);
\filldraw [black] (-6.8,0) circle (1.2pt);
\filldraw [black] (-6.8,-0.8) circle (1.2pt);
\filldraw [black] (-5.8,0) circle (1.2pt);
\filldraw [black] (-5.8,-0.8) circle (1.2pt);
\filldraw [black] (-5.0,0) circle (1.2pt);
\filldraw [black] (-5.0,-0.8) circle (1.2pt);
\filldraw [black] (-4.2,0) circle (1.2pt);
\filldraw [black] (-4.2,-0.8) circle (1.2pt);
\filldraw [black] (-3.4,0) circle (1.2pt);
\filldraw [black] (-3.4,-0.8) circle (1.2pt);
\filldraw [black] (-2.4,0) circle (1.2pt);
\filldraw [black] (-2.4,-0.8) circle (1.2pt);
\node at (-2.9,-0.45) {$\displaystyle \cdots \color{black}$};
\node at (-6.8,-1.1) {$\displaystyle 1 \color{black}$};
\node at (-5.0,-1.1) {$\displaystyle i \color{black}$};
\node at (-4.1,-1.1) {$\displaystyle i+1 \color{black}$};
\node at (-2.4,-1.1) {$\displaystyle k \color{black}$};
\end{tikzpicture}
\end{matrix}\ .
\end{align*}
The elements $s_i=(i,i+1)$ are the Coxeter generators in $\mathfrak{S}_k$ and ${\lbrace}s_i\mid i=1,\ldots,k-1{\rbrace}$ generates the group ring $R\mathfrak{S}_k\subseteq \mathcal{A}_{2k}$.

Let $*:\mathcal{A}_{2k}\to \mathcal{A}_{2k}$ denote the algebra anti--involution which reflects each element of the diagram basis for $\mathcal{A}_{2k}$ in the horizontal axis. Then $e_{i}^*=e_{i}$ for $i=1,\ldots,2k-1$ and $w^*=w^{-1}$ for all $w\in\mathfrak{S}_k\subseteq \mathcal{A}_{2k}$. Restricting the map $*$ from $\mathcal{A}_{2k}$ to $\mathcal{A}_{2k-1}$, gives an algebra anti-involution of $\mathcal{A}_{2k-1}$ which we also denote by $*$.  
\subsection{Jucys--Murphy elements}\label{j-m-s}
Jucys--Murphy (or JM) elements for the partition algebras were defined by Halverson and Ram~\cite[Sect.~3]{MR2143201}. An equivalent recursive definition for partition algebra JM elements, given in~\cite[Sect.~3]{MR3035512}, yields a set of involutions in $\mathcal{A}_{2k}$ that are local in the sense of~\cite{MR1109059}. 
Let 
\begin{align*}
L_1=0,&& L_2=e_1,&& \sigma_{2}=1,&&\text{and}&&\sigma_3=s_1.
\end{align*}
For $i=1,2,\dots,$ define 
\begin{align}\label{jm-i-1}
L_{2i+2}=-s_iL_{2i}e_{2i}-e_{2i}L_{2i}s_i+ e_{2i}L_{2i}e_{2i+1}e_{2i} +s_{i}L_{2i}s_{i}+\sigma_{2i+1},
\end{align}
where, for $i=2,3,\dots,$
\begin{equation}
\begin{split}\label{sigma-i-1}
\sigma_{2i+1}=s_{i-1}s_i\sigma_{2i-1}s_i&s_{i-1} +s_ie_{2i-2}L_{i-1}s_ie_{2i-2}s_i+e_{2i-2}L_{2i-2}s_ie_{2i-2}\\
&-s_ie_{2i-2}L_{2i-2}s_{i-1}e_{2i}e_{2i-1}e_{2i-2}-e_{2i-2}e_{2i-1}e_{2i}s_{i-1}L_{2i-2}e_{2i-2}s_i,
\end{split}
\end{equation}
and,
\begin{align}\label{jm-i-2}
L_{2i+1}=-L_{2i}e_{2i}-e_{2i}L_{2i}+(z-L_{2i-1})e_{2i}+s_iL_{2i-1}s_i+\sigma_{2i},
\end{align}
where, for $i=2,3,\dots,$
\begin{equation}
\begin{split}\label{sigma-i-2}
\sigma_{2i}=s_{i-1}s_i\sigma_{2i-2}s_is_{i-1}+&e_{2i-2}L_{i-1}s_ie_{2i-2}s_i+s_ie_{2i-2}L_{i-1}s_ie_{2i-2}\\
-&e_{2i-2}L_{i-1}s_{i-1}e_{2i}e_{2i-1}e_{2i-2}-s_ie_{2i-2}e_{2i-1}e_{2i}s_{i-1}L_{i-1}e_{2i-2}s_i.
\end{split}
\end{equation}

It is shown in~\cite[Theorem~5.5]{MR3035512} that the family $\{L_i\mid i\ge1\}$ determined by the recursions above coincides with the family of JM elements defined by Halverson and Ram~\cite{MR2143201}. For $k\in\mathbb{Z}_{>0}$, we let $\mathscr{L}_k$ denote the JM subalgebra of $\mathcal{A}_k$ generated by $L_1,\ldots,L_k$. 

\begin{remark}
In~\cite{MR3035512}, the elements $\sigma_{2i}$ and $ \sigma_{2i+1}$  are respectively denoted by $\sigma_{i+\half}$ and $\sigma_{i+1}$, while $e_{2i}$ and $e_{2i+1}$ are denoted by $p_{i+\half}$ and $p_{i+1}$ respectively. 
\end{remark}
\begin{example}
In terms of the diagram basis,
\begin{align*}
&\begin{matrix}
\begin{tikzpicture}
\foreach \x in {0,0.8,1.6,...,11.6} 
\filldraw [black] (\x,0) circle (1.1pt);
\foreach \x in {0,0.8,1.6,...,11.6} 
\filldraw [black] (\x,-0.8) circle (1.1pt);
\foreach \x in {0.0, 1.6, 4.0,...,11.6} 
\draw (\x,0) -- (\x,-0.8);
\draw (0,-0.8) -- (0.8,-0.8);
\draw (2.4,-0.8) -- (2.4,-0.0) -- (3.2,-0.0); 
\draw (4.8,0.0) -- (5.6,0.0); 
\draw (4.8,-0.8) -- (5.6,-0.8); 
\draw (7.2,0) -- (7.2,-0.8);
\draw (9.6,0.0) -- (10.4,-0.8);
\draw (9.6,-0.8) -- (10.4,0.0);
\node at (-0.8,-0.4) {$\displaystyle L_4=- \color{black}$};
\node at (2.0,-0.4) {$\displaystyle - \color{black}$};
\node at (4.4,-0.4) {$\displaystyle + \color{black}$};
\node at (6.8,-0.4) {$\displaystyle + \color{black}$};
\node at (9.2,-0.4) {$\displaystyle + \color{black}$};
\end{tikzpicture}
\end{matrix}
\intertext{and}
&\begin{matrix}
\begin{tikzpicture}
\foreach \x in {0,0.8,1.6,...,11.6} 
\filldraw [black] (\x,0) circle (1.1pt);
\foreach \x in {0,0.8,1.6,...,11.6} 
\filldraw [black] (\x,-0.8) circle (1.1pt);
\foreach \x in {1.6,4.0,...,11.6} 
\draw (-0.8+\x,0) -- (\x,-0.8);
\foreach \x in {1.6,4.0,...,11.6} 
\draw (-0.8+\x,-0.8) -- (\x,-0.0);
\draw (0,-0.8) -- (0.8,-0.8);
\draw (2.4,-0.8) -- (2.4,-0.0) -- (3.2,-0.0); 
\draw (4.8,0.0) -- (5.6,0.0); 
\draw (4.8,-0.8) -- (5.6,-0.8); 
\draw (7.2,0) -- (7.2,-0.8);
\draw (9.6,0.0) -- (10.4,-0.8);
\draw (9.6,-0.8) -- (10.4,0.0);
\draw (0,0) -- (0,-0.8);
\node at (-0.8,-0.4) {$\displaystyle \sigma_5=- \color{black}$};
\node at (2.0,-0.4) {$\displaystyle - \color{black}$};
\node at (2.0,-0.4) {$\displaystyle - \color{black}$};
\node at (4.4,-0.4) {$\displaystyle + \color{black}$};
\node at (6.8,-0.4) {$\displaystyle + \color{black}$};
\node at (9.2,-0.4) {$\displaystyle + \color{black}$};
\end{tikzpicture}
\end{matrix}\,\,.
\end{align*}
\end{example}
\begin{example}
The elements $L_6$ and $\sigma_7$ may be represented respectively by the diagrams 
\begin{align*}
&\begin{matrix}
\begin{tikzpicture}
%%Lay down a grid of vertices
\foreach \x in {0,0.8,1.6,...,15.6} 
	{
	\foreach \y in {0,-1.2,...,-4.8}
		{
		\filldraw [black] (\x,\y) circle (1.1pt);
		}
	}
\foreach \x in {0,0.8,1.6,...,15.6} 
	{
	\foreach \y in {-0.8,-2.0,...,-5.6}
		{
		\filldraw [black] (\x,\y) circle (1.1pt);
		}
	}	
%\foreach \x in {1.6,4.8,...,14.4} 
%	{
%	\foreach \y in {-0.0,-1.2,...,-4.8}
%		{
%		\draw (\x,\y) -- (0.8+\x,-0.8+\y);
%		}
%	}	
\foreach \x in {1.6,4.8,...,14.4} 
	{
	\foreach \y in {-0.8,-2.0,...,-5.6}
		{
		\draw (0.8+\x,\y) -- (0.8+\x,0.8+\y);
		}
	}	
%%Extra crosses on last row
\foreach \x in {0.8,4.0,...,13.6}
	{
	\draw (\x,-4.8) -- (0.8+\x,-5.6);
	}
\foreach \x in {0.8,4.0,...,13.6}
	{
	\draw (\x,-5.6) -- (0.8+\x,-4.8);
	}
%%Horizontal edges on middle row
\foreach \x in {0.8,4.0,...,13.6}
	{
	\draw (\x,-2.4) -- (0.8+\x,-2.4);
	}
\foreach \x in {0.8,4.0,...,13.6}
	{
	\draw (\x,-3.2) -- (0.8+\x,-3.2);
	}
%%Horizontal edges on second row
\foreach \x in {0.8,4.0,...,13.6}
	{
	\draw (\x,-1.2) -- (0.8+\x,-1.2);
	}
%%Horizontal edges on first row
\foreach \x in {0.8,4.0,...,13.6}
	{
	\draw (\x,-0.8) -- (0.8+\x,-0.8);
	}
%%Vertical edges in the first column of vertices
\foreach \y in {-0,-1.2,...,-4.8}
	{
	\draw (0,\y) -- (0,-0.8+\y);
	}
%%Vertical edges in the first row
\foreach \x in {0.8,4.0,...,13.6}
	{
	\draw (\x,0) -- (\x,-0.8);
	}
%%Vertical edges in the second row
\foreach \x in {0.8,4.0,...,13.6}
	{
	\draw (\x,-1.2) -- (\x,-2.0);
	}
%%Vertical edges on fourth row
\foreach \x in {0.8,4.0,...,13.6}
	{
	\draw (\x,-3.6) -- (\x,-4.4);
	}
%%Vertical edges in the "9.6" column of vertices
\foreach \y in {-0,-1.2,...,-4.8}
	{
	\draw (9.6,\y) -- (9.6,-0.8+\y);
	}
\node at (-0.3,-1.6) {$+\color{black}$};
\node at (-0.3,-2.8) {$-\color{black}$};
\node at (-0.3,-4.0) {$-\color{black}$};
\node at (-0.3,-5.2) {$+\color{black}$};
%%Second column of signs	
\node at (2.8,-0.4) {$+ \color{black}$};
\node at (2.8,-1.6) {$+ \color{black}$};
\node at (2.8,-2.8) {$- \color{black}$};
\node at (2.8,-4.0) {$- \color{black}$};
\node at (2.8,-5.2) {$+ \color{black}$};
%%Third column of signs	
\node at (6,-0.4) {$- \color{black}$};
\node at (6,-1.6) {$- \color{black}$};
\node at (6,-2.8) {$+ \color{black}$};
\node at (6,-4.0) {$+ \color{black}$};
\node at (6,-5.2) {$+ \color{black}$};
%%Fourth column of signs	
\node at (9.2,-0.4) {$- \color{black}$};
\node at (9.2,-1.6) {$+ \color{black}$};
\node at (9.2,-2.8) {$+ \color{black}$};
\node at (9.2,-4.0) {$+ \color{black}$};
\node at (9.2,-5.2) {$- \color{black}$};
%%Fifth column of signs	
\node at (12.4,-0.4) {$- \color{black}$};
\node at (12.4,-1.6) {$- \color{black}$};
\node at (12.4,-2.8) {$+ \color{black}$};
\node at (12.4,-4.0) {$+ \color{black}$};
\node at (12.4,-5.2) {$- \color{black}$};
%%%Now we draw a few random lines to fill the picture out a bit.
\draw (0.0,0.0) -- (0.8,0.0);
\draw (3.2,0.0) -- (3.2,-0.8);
\draw (0.0,-0.8) -- (0.8,-0.8);
\draw (6.4,-0.8) -- (7.2,-0.8);
\draw (3.2,-1.2) -- (3.2,-2.0);
\draw (3.2,-1.2) -- (4.0,-1.2);
\draw (3.2,-2.0) -- (4.0,-2.0);
\draw (6.4,-1.2) -- (7.2,-1.2);
\draw (12.8,-0.0) -- (13.6,-0.0);
\draw (12.8,-2.0) -- (13.6,-2.0);
\draw (12.8,-0.8) -- (14.4,0);
\draw (12.8,-1.2) -- (14.4,-2.0);
\draw (12.8,-2.4) --(13.6,-3.2);
\draw (12.8,-3.2) --(13.6,-2.4);
\draw (0,-3.2) -- (0.8,-3.2);
\draw (3.2,-3.2) -- (3.2,-2.4) -- (4.0,-2.4);
\draw (6.4,-2.4) -- (7.2,-2.4);
\draw (6.4,-3.2) -- (7.2,-3.2);
\draw (3.2,-3.6) -- (3.2,-4.4);
\draw (3.2,-4.8) -- (4.0,-5.6);
\draw (3.2,-5.6) -- (4.0,-4.8);
\draw (6.4,-4.8) -- (7.2,-4.8);
\draw (6.4,-5.6) -- (7.2,-5.6);
\draw (12.8,-5.6) -- (12.8,-4.8) -- (13.6,-4.8);
\draw (9.6,-5.6) -- (10.4,-5.6);
%%Now add some curves to complete the picture
\draw (3.2,0) .. controls (3.8,-0.3) and (4.2,-0.3) .. (4.8,-0.0);
\draw (6.4,0) .. controls (7.0,-0.3) and (7.4,-0.3) .. (8.0,-0.0);
\draw (0.0,-2.0) .. controls (0.6,-1.7)  and (1.0,-1.7).. (1.6,-2.0);
\draw (6.4,-2.0) .. controls (7.0,-1.7) and (7.4,-1.7) .. (8.0,-2.0);
\draw (0.0,-4.4) .. controls (0.6,-4.1) and (1.0,-4.1) .. (1.6,-4.4);
\draw (3.2,-3.6) .. controls (3.8,-3.9) and (4.2,-3.9) .. (4.8,-3.6);
\draw (6.4,-3.6) .. controls (7.0,-3.9) and (7.4,-3.9) .. (8.0,-3.6);
\draw (6.4,-4.4) .. controls (7.0,-4.1)  and (7.4,-4.1) .. (8.0,-4.4);
\draw (12.8,-3.6) .. controls (13.8,-3.95) .. (14.4,-4.4);
\draw (12.8,-4.4) .. controls (13.8,-4.05) .. (14.4,-3.6);
\end{tikzpicture}
\end{matrix}
\intertext{and}
&\begin{matrix}
\begin{tikzpicture}
%%Lay down a grid of vertices
\foreach \x in {0,0.8,1.6,...,15.6} 
	{
	\foreach \y in {0,-1.2,...,-4.8}
		{
		\filldraw [black] (\x,\y) circle (1.1pt);
		}
	}
\foreach \x in {0,0.8,1.6,...,15.6} 
	{
	\foreach \y in {-0.8,-2.0,...,-5.6}
		{
		\filldraw [black] (\x,\y) circle (1.1pt);
		}
	}	
\foreach \x in {1.6,4.8,...,14.4} 
	{
	\foreach \y in {-0.0,-1.2,...,-4.8}
		{
		\draw (\x,\y) -- (0.8+\x,-0.8+\y);
		}
	}	
\foreach \x in {1.6,4.8,...,14.4} 
	{
	\foreach \y in {-0.8,-2.0,...,-5.6}
		{
		\draw (\x,\y) -- (0.8+\x,0.8+\y);
		}
	}	
%%Extra crosses on last row
\foreach \x in {0.8,4.0,...,13.6}
	{
	\draw (\x,-4.8) -- (0.8+\x,-5.6);
	}
\foreach \x in {0.8,4.0,...,13.6}
	{
	\draw (\x,-5.6) -- (0.8+\x,-4.8);
	}
%%Horizontal edges on middle row
\foreach \x in {0.8,4.0,...,13.6}
	{
	\draw (\x,-2.4) -- (0.8+\x,-2.4);
	}
\foreach \x in {0.8,4.0,...,13.6}
	{
	\draw (\x,-3.2) -- (0.8+\x,-3.2);
	}
%%Horizontal edges on second row
\foreach \x in {0.8,4.0,...,13.6}
	{
	\draw (\x,-1.2) -- (0.8+\x,-1.2);
	}
%%Horizontal edges on first row
\foreach \x in {0.8,4.0,...,13.6}
	{
	\draw (\x,-0.8) -- (0.8+\x,-0.8);
	}
%%Vertical edges in the first column of vertices
\foreach \y in {-0,-1.2,...,-4.8}
	{
	\draw (0,\y) -- (0,-0.8+\y);
	}
%%Vertical edges in the first row
\foreach \x in {0.8,4.0,...,13.6}
	{
	\draw (\x,0) -- (\x,-0.8);
	}
%%Vertical edges in the second row
\foreach \x in {0.8,4.0,...,13.6}
	{
	\draw (\x,-1.2) -- (\x,-2.0);
	}
%%Vertical edges on fourth row
\foreach \x in {0.8,4.0,...,13.6}
	{
	\draw (\x,-3.6) -- (\x,-4.4);
	}
%%Vertical edges in the "9.6" column of vertices
\foreach \y in {-0,-1.2,...,-4.8}
	{
	\draw (9.6,\y) -- (9.6,-0.8+\y);
	}
\node at (-0.3,-1.6) {$+\color{black}$};
\node at (-0.3,-2.8) {$-\color{black}$};
\node at (-0.3,-4.0) {$-\color{black}$};
\node at (-0.3,-5.2) {$+\color{black}$};
%%Second column of signs	
\node at (2.8,-0.4) {$+ \color{black}$};
\node at (2.8,-1.6) {$+ \color{black}$};
\node at (2.8,-2.8) {$- \color{black}$};
\node at (2.8,-4.0) {$- \color{black}$};
\node at (2.8,-5.2) {$+ \color{black}$};
%%Third column of signs	
\node at (6,-0.4) {$- \color{black}$};
\node at (6,-1.6) {$- \color{black}$};
\node at (6,-2.8) {$+ \color{black}$};
\node at (6,-4.0) {$+ \color{black}$};
\node at (6,-5.2) {$+ \color{black}$};
%%Fourth column of signs	
\node at (9.2,-0.4) {$- \color{black}$};
\node at (9.2,-1.6) {$+ \color{black}$};
\node at (9.2,-2.8) {$+ \color{black}$};
\node at (9.2,-4.0) {$+ \color{black}$};
\node at (9.2,-5.2) {$- \color{black}$};
%%Fifth column of signs	
\node at (12.4,-0.4) {$- \color{black}$};
\node at (12.4,-1.6) {$- \color{black}$};
\node at (12.4,-2.8) {$+ \color{black}$};
\node at (12.4,-4.0) {$+ \color{black}$};
\node at (12.4,-5.2) {$- \color{black}$};
%%%Now we draw a few random lines to fill the picture out a bit.
\draw (0.0,0.0) -- (0.8,0.0);
\draw (3.2,0.0) -- (3.2,-0.8);
\draw (0.0,-0.8) -- (0.8,-0.8);
\draw (6.4,-0.8) -- (7.2,-0.8);
\draw (3.2,-1.2) -- (3.2,-2.0);
\draw (3.2,-1.2) -- (4.0,-1.2);
\draw (3.2,-2.0) -- (4.0,-2.0);
\draw (6.4,-1.2) -- (7.2,-1.2);
\draw (12.8,-0.0) -- (13.6,-0.0);
\draw (12.8,-2.0) -- (13.6,-2.0);
\draw (12.8,-0.8) -- (14.4,0);
\draw (12.8,-1.2) -- (14.4,-2.0);
\draw (12.8,-2.4) --(13.6,-3.2);
\draw (12.8,-3.2) --(13.6,-2.4);
\draw (0,-3.2) -- (0.8,-3.2);
\draw (3.2,-3.2) -- (3.2,-2.4) -- (4.0,-2.4);
\draw (6.4,-2.4) -- (7.2,-2.4);
\draw (6.4,-3.2) -- (7.2,-3.2);
\draw (3.2,-3.6) -- (3.2,-4.4);
\draw (3.2,-4.8) -- (4.0,-5.6);
\draw (3.2,-5.6) -- (4.0,-4.8);
\draw (6.4,-4.8) -- (7.2,-4.8);
\draw (6.4,-5.6) -- (7.2,-5.6);
\draw (12.8,-5.6) -- (12.8,-4.8) -- (13.6,-4.8);
\draw (9.6,-5.6) -- (10.4,-5.6);
%%Now add some curves to complete the picture
\draw (3.2,0) .. controls (3.8,-0.3) and (4.2,-0.3) .. (4.8,-0.0);
\draw (6.4,0) .. controls (7.0,-0.3) and (7.4,-0.3) .. (8.0,-0.0);
\draw (0.0,-2.0) .. controls (0.6,-1.7)  and (1.0,-1.7).. (1.6,-2.0);
\draw (6.4,-2.0) .. controls (7.0,-1.7) and (7.4,-1.7) .. (8.0,-2.0);
\draw (0.0,-4.4) .. controls (0.6,-4.1) and (1.0,-4.1) .. (1.6,-4.4);
\draw (3.2,-3.6) .. controls (3.8,-3.9) and (4.2,-3.9) .. (4.8,-3.6);
\draw (6.4,-3.6) .. controls (7.0,-3.9) and (7.4,-3.9) .. (8.0,-3.6);
\draw (6.4,-4.4) .. controls (7.0,-4.1)  and (7.4,-4.1) .. (8.0,-4.4);
\draw (12.8,-3.6) .. controls (13.8,-3.95) .. (14.4,-4.4);
\draw (12.8,-4.4) .. controls (13.8,-4.05) .. (14.4,-3.6);
\end{tikzpicture}
\end{matrix}
\end{align*}
\end{example}
\begin{theorem}[{\cite[Theorem~4.1]{MR3035512}}]\label{n-pres}
If $k=1,2,\ldots,$ then $\mathcal{A}_{2k}$ is the unital associative algebra presented by the generators $e_1,e_2,\ldots,e_{2k-1},\sigma_3,\sigma_4,\ldots,\sigma_{2k-1}$ 
and the relations:
\begin{enumerate}
\item (Involutions)\label{inv} $\sigma_i^2=1$ for $i=1,\ldots,2k-1$. 
\item (Braid relations) \label{brd}
\begin{enumerate}
\item $\sigma_{i}\sigma_{j}=\sigma_{j}\sigma_{i}$ if $|i-j|\ge 2$. \label{brd-1}
\item $s_is_{i+1}s_i=s_{i+1}s_is_{i+1}$, for $i=1,\ldots,k-2$, where \label{brd-4}
\begin{align*}
s_j=
\begin{cases}
\sigma_{2j+1},&\text{if $j=1$,}\\
\sigma_{2j}\sigma_{2j+1},&\text{if $1<j<k$,}
\end{cases}
\end{align*}
are the Coxeter generators for $\mathfrak{S}_k$.
\end{enumerate}
\item (Idempotent relations)\label{ide}
\begin{enumerate}
\item $e_{2i-1}^2=ze_{2i-1}$, for $i=1,\ldots,k$.\label{ide-1}
\item $e_{2i}^2=e_{2i}$, for $i=1,\ldots,k-1$.\label{ide-2}
\item $\sigma_{2i+1}e_{2i}=e_{2i}\sigma_{2i+1}=e_{2i}$, for $i=1,\ldots,k-1$.\label{ide-3}
\item $\sigma_{2i}e_{2i}=e_{2i}\sigma_{2i}=e_{2i}$, for $i=1,\ldots,k-1$. \label{ide-4}
\item $\sigma_{2i}e_{2i-1}e_{2i+1}=\sigma_{2i+1}e_{2i-1}e_{2i+1}$, for $i=1,\ldots,k-1$.\label{ide-5}
\item $e_{2i-1}e_{2i+1}\sigma_{2i}=e_{2i-1}e_{2i+1}\sigma_{2i+1}$, for $i=1,\ldots,k-1$.\label{ide-6}
\end{enumerate}
\item (Commutation relations)\label{com}
\begin{enumerate}
\item $e_ie_j=e_je_i$, if $|i-j|\ge 2$.\label{com-1}
\item $\sigma_ip_j=p_j\sigma_i$ if $j\ne i-1,i$. \label{com-4}
\item $\sigma_{2i-1}e_{2j}=e_{2j}\sigma_{2i-1}$, if $j\ne i$. \label{com-5}
\item $\sigma_{2i}e_{2j-1}=e_{2j-1}\sigma_{2i}$, if $j\ne i,i+1$. \label{com-6}
\item $\sigma_{2i}e_{2j+1}=e_{2j+1}\sigma_{2i}$, if $j\ne i-1$. \label{com-7}
\item $\sigma_{2i}e_{2i-1}\sigma_{2i}=\sigma_{2i+1}e_{2i+1}\sigma_{2i+1}$, for $i=1,\ldots,k-1$.\label{com-8}
\item $\sigma_{2i}e_{2i-2}\sigma_{2i}=\sigma_{2i-1}e_{2i} \sigma_{2i-1}$, for $i=2,\ldots,k-1$. \label{com-9}
\end{enumerate}
\item (Contractions) $e_ie_{i+1}e_i=e_i$ and $e_{i+1}e_ie_{i+1}=e_{i+1}$, for $i=1,\ldots,k-2$. 
\end{enumerate}
\end{theorem}
The involution $\sigma_{2i+1}$, which maps to $s_i$ under the quotient map $\mathcal{A}_{2i+2}\to R\mathfrak{S}_{i+1}$, may be regarded as a partition algebra analogue of the Coxeter generator $s_i\in\mathfrak{S}_{i+1}$. 
\begin{proposition}[{\cite[Sect.~3]{MR3035512}}]\label{a-c-r}
For $i=1,2,\ldots,$ the following statements hold:
\begin{enumerate}[label=(\arabic{*}), ref=\arabic{*},leftmargin=0pt,itemindent=1.5em]
\item $L_i\in\mathcal{A}_i$ and $\sigma_{i}\in\mathcal{A}_{i+1}$.\label{a-c-r-1}
 \item $L_{i}^*=L_i$ and $\sigma_{i}^*=\sigma_i$. \label{a-c-r-2}
\item $L_{i}$ and $\sigma_{i+1}$ commute with $\mathcal{A}_{i-1}(z)$.\label{a-c-r-10}
\item $s_{i+1}\sigma_{2i+1}e_{2i+2}=e_{2i}s_{i+1}\sigma_{2i+1}$.\label{a-c-r-4}
\item $\sigma_{2i}e_{2i-1}e_{2i}=L_{2i}e_{2i}$.\label{a-c-r-5}
\item $\sigma_{2i+1}e_{2i+1}e_{2i}=L_{2i}e_{2i}$.\label{a-c-r-6}
%\item $e_{2i}L_ie_{2i}=e_{2i}$.\label{a-c-r-7}
%\item $\sigma_{2i}\sigma_{2i+1}=\sigma_{2i+1}\sigma_{2i}=s_i$, and $\sigma_{2i}^2=\sigma_{2i+1}^2=1$.\label{a-c-r-8}
\label{a-c-r-9}
\item $(L_{i}+L_{i+1})e_{i}=e_{i}(L_{i}+L_{i+1})=z e_{i}$.\label{a-c-r-12}
\item $e_{2i+1}\sigma_{2i+1}e_{2i+1}=L_ie_{2i+1}$.\label{a-c-r-15}
\item $e_{2i+1}\sigma_{2i}e_{2i+1}=(z-L_{2i-1})e_{2i+1}$.\label{a-c-r-16}
\item $e_{2i+2}\sigma_{2i+1}e_{2i+2}=e_{2i}e_{2i+2}$.\label{a-c-r-17}
\end{enumerate}
\end{proposition}
The next statement translates the recursions~\eqref{jm-i-1} and~\eqref{jm-i-2} into the language of Theorem~\ref{n-pres}.
\begin{proposition}\label{t-0}
For $i=1,2,\ldots,$ the following statements hold:
\begin{enumerate}[label=(\arabic{*}), ref=\arabic{*},leftmargin=0pt,itemindent=1.5em]
\item $\sigma_{2i+1}L_{2i+2}-L_{2i}\sigma_{2i+1}=-L_{2i}e_{2i}-e_{2i}e_{2i+1}+e_{2i}L_{2i}e_{2i+1}e_{2i}+1$. \label{t-0-1}
\item $\sigma_{2i}L_{2i+1}-L_{2i-1}\sigma_{2i}=-e_{2i-1}e_{2i}-e_{2i}L_{2i}+(z-L_{2i-1})e_{2i}+1$.\label{t-0-2}
\item\label{j-n-1} $L_{2i+2}=-e_{2i+1}e_{2i}-e_{2i}e_{2i+1}+e_{2i}L_{2i}e_{2i+1}e_{2i}+\sigma_{2i+1}L_{2i}\sigma_{2i+1}+\sigma_{2i+1}$. 
\item\label{j-n-2} $L_{2i+1}=-e_{2i-1}e_{2i}-e_{2i}e_{2i-1}
+(z-L_{2i-1})e_{2i} +\sigma_{2i}L_{2i-1}\sigma_{2i}+\sigma_{2i}$.
\end{enumerate}
\end{proposition}
\begin{proof}
\eqref{t-0-1} The definition~\eqref{jm-i-1} and the relations 
$\sigma_{2i+1}s_i=s_i\sigma_{2i+1}=\sigma_{2i}$ and $\sigma_{2i+1}^2=1$ give 
\begin{align*}
s_iL_{2i+2}s_i&=-L_{2i}e_{2i}-e_{2i}L_{2i}+e_{2i}L_{2i}e_{2i+1}e_{2i}+L_{2i}+\sigma_{2i+1}
\end{align*}
and
\begin{align*}
s_iL_{2i+2}\sigma_{2i}&=-L_{2i}e_{2i}-e_{2i}L_{2i}\sigma_{2i+1}+e_{2i}L_{2i}e_{2i+1}e_{2i} +L_{2i}\sigma_{2i+1}+1.
\end{align*}
Since $L_{2i+2}$ commutes with $\sigma_{2i}\in\mathcal{A}_{2i+1}$, 
\begin{align*}
\sigma_{2i+1}L_{2i+2}&=-L_{2i}e_{2i}-e_{2i}L_i\sigma_{2i+1}+e_{2i}L_{2i}e_{2i+1}e_{2i} +L_{2i}\sigma_{2i+1}+1,
\end{align*}
and
\begin{align*}
\sigma_{2i+1}L_{2i+2}-L_{2i}\sigma_{2i+1}&=-L_{2i}e_{2i}-e_{2i}L_{2i}\sigma_{2i+1}+e_{2i}L_{2i}e_{2i+1}e_{2i}+1.
\end{align*}
Making the substitution $e_{2i}e_{2i+1}\sigma_{2i+1}=e_{2i}L_{2i}$, or $e_{2i}e_{2i+1}=e_{2i}L_{2i}\sigma_{2i+1}$ in the last expression gives the required statement.

\eqref{t-0-2} The definition~\eqref{jm-i-2} and the relations $\sigma_{2i+1}s_i=s_i\sigma_{2i+1}=\sigma_{2i}$ and $\sigma_{2i+1}^2=1$ give
\begin{align*}
\sigma_{2i}L_{2i+1}&=-\sigma_{2i}L_{2i}e_{2i}-e_{2i}L_{2i}
+(z-L_{2i-1})e_{2i}+\sigma_{2i+1}L_{2i-1}s_i+1.
\end{align*}
Noting that $\sigma_{2i+1}$ commutes with $L_{2i-1}\in\mathcal{A}_{2i-1}$, 
\begin{align*}
\sigma_{2i}L_{2i+1}-L_{2i-1}\sigma_{2i}&=-\sigma_{2i}L_{2i}e_{2i}-e_{2i}L_{2i}+(z-L_{2i-1})e_{2i}+1.
\end{align*}
Applying the relation $e_{2i}L_{2i}\sigma_{2i}=e_{2i}e_{2i-1}$  from Proposition~\ref{a-c-r} to the right hand side of the last equality gives the required statement.

\eqref{j-n-1} From Proposition~\ref{a-c-r}\eqref{a-c-r-6} and the relation $\sigma_{2i+1}^2=1$, we obtain $\sigma_{2i+1}L_{2i}e_{2i}=e_{2i+1}e_{2i}$. Since $\sigma_{2i+1}e_{2i}=e_{2i}$, the statement follows from item~\eqref{t-0-1}.

\eqref{j-n-2} We proceed as in the proof of~\eqref{j-n-1}, using the relation $e_{2i}L_{2i}\sigma_{2i}=e_{2i}e_{2i-1}$.
\end{proof}

\section{A Cellular Basis}\label{m-b-s}
Introduced by Graham and Lehrer as a device for studying non--semisimple representations of a class of algebras that includes Hecke algebras, Schur algebras and Brauer algebras~\cite{MR1376244}, cellular algebras are defined by the existence of a \emph{cellular basis} and a \emph{cell datum}, which have combinatorial properties analogous to the ``Robinson--Schensted correspondence" in the group algebra of the symmetric group. Cellularity of partition algebras was established by Xi using a tangle type basis~\cite[Theorem~4.1]{MR1711582}. The construction of seminormal representations will rely on the Murphy type cellular bases for partition algebras given in~\cite{EG:2012} and we will adhere to the conventions and notation established in that paper. For further details on the subject of cellular algebras in general, we refer the reader to~\cite{MR1376244, MR1711316, MR2414949, MR2794027,MR2774622}.
   
For $i=0,1,\ldots,$ let $\widehat{H}_{2i}=\widehat{H}_{2i+1}=\lbrace \lambda\mid \lambda\vdash i\rbrace$, and 
\begin{align*}
\hat{A}_{2i}=\hat{A}_{2i+1}={\lbrace}(\lambda,l)\mid\lambda\in \widehat{H}_{i-2l}, \text{ for }l=0,1,\ldots,\lfloor i/2\rfloor{\rbrace}.
\end{align*}
Let $\widehat{H}$ denote the graph with 
\begin{enumerate}
\item vertices on level $i$: $\widehat{H}_i$, 
\item if $i$ is even and $\lambda\in\widehat{H}_i$, an edge $\lambda\to \mu$ in $\widehat{H}$, for $\mu\in\widehat{H}_{i+1}$, if $\lambda=\mu$, and 
\item if $i$ is odd and $\lambda\in\widehat{H}_i$, an edge $\lambda\to \mu$ in $\widehat{H}$, for $\mu\in\widehat{H}_{i+1}$, if $\mu$ is obtained from $\lambda$ by adding a node. 
\end{enumerate}
Similarly, build a graph $\hat{A}$, with
\begin{enumerate}
\item vertices on level $i$: $\hat{A}_i$,
\item if $i$ is even and $(\lambda,l)\in\hat{A}_i$, an edge $(\lambda,{l})\to(\mu,m)$ in $\hat{A}$, for $(\mu,m)\in\hat{A}_{i+1}$, if 
\begin{enumerate}
\item $(\lambda,l)=(\mu,m)$, or 
\item $m=l+1$ and $\mu$ is obtained from $\lambda$ by removing a node, and 
\end{enumerate}
\item if $i$ is odd and $(\lambda,l)\in\hat{A}_i$, an edge $(\lambda,{l})\to(\mu,m)$ in $\hat{A}$, for $(\mu,m)\in\hat{A}_{i+1}$, if 
\begin{enumerate}
\item $(\lambda,l+1)=(\mu,m)$, or 
\item $l=m$ and $\mu$ is obtained from $\lambda$ by adding a node. 
\end{enumerate}
\end{enumerate}

The first few levels of $\hat{A}$ are given in~\eqref{d-i-a}.
\begin{align}\label{d-i-a}
\begin{matrix}
\xymatrix{
0&\emptyset\ar[d]      & & & & & &\\
%\textstyle{\half}
&\emptyset \ar[d]\ar[dr] & & & & & &\\
2&\emptyset\ar[d]    &\text{\tiny\Yvcentermath1$\yng(1)$}\ar[d]\ar[dl] & & & & &\\
%\textstyle{1+\half}
&\emptyset\ar[d]\ar[dr]&\text{\tiny\Yvcentermath1$\yng(1)$}\ar[d]\ar[dr]\ar[drr] & & & & &\\
4&\emptyset\ar[d] &\text{\tiny\Yvcentermath1$\yng(1)$}\ar[d]\ar[dl] &\text{\tiny\Yvcentermath1$\yng(2)$}\ar[d]\ar[dl] & \text{\tiny\Yvcentermath1$\yng(1,1)$}\ar[d]\ar[dll] & & & \\
%\textstyle{2+\half}
&\emptyset \ar[d]\ar[dr] &\text{\tiny\Yvcentermath1$\yng(1)$} \ar[d]\ar[dr]\ar[drr]&\text{\tiny\Yvcentermath1$\yng(2)$} \ar[d]\ar[drr]\ar[drrr]& \text{\tiny\Yvcentermath1$\yng(1,1)$}\ar[d]\ar[drr]\ar[drrr] & & &\\
6& \emptyset & \text{\tiny\Yvcentermath1$\yng(1)$} &\text{\tiny\Yvcentermath1$\yng(2)$} & \text{\tiny\Yvcentermath1$\yng(1,1)$} &\text{\tiny\Yvcentermath1$\yng(3)$} & \text{\tiny\Yvcentermath1$\yng(2,1)$} &\text{\tiny\Yvcentermath1$\yng(1,1,1)$}
}
\end{matrix}
\end{align}
The graph $\hat{A}$ is the semisimple branching diagram~\cite[Theorem~2.24]{MR2143201} for the tower of algebras~\eqref{tower}. The next definition~\cite[Sect.~2]{MR2143201} extracts from $\hat{A}$ a set of paths that are partition algebra analogues to the standard tableaux in the representation theory of the symmetric group.

\begin{definition}
Let $k\in\mathbb{Z}_{\ge0}$ and $(\lambda,0)\in\hat{A}_k$. A path to level $k$ of \emph{shape} $(\lambda,l)$ in $\hat{A}$ is a sequence
\begin{align}\label{p-d-e}
\mathfrak{t}=\big((\lambda^{(0)},l_0),(\lambda^{(1)},l_1),\ldots,(\lambda^{(k)},l_k)),
\end{align}
where $(\lambda^{(0)},{l}_0)=(\emptyset,0)$, $(\lambda^{(k)},l_k)=(\lambda,l)$ and $(\lambda^{(i-1)},{l}_{i-1})\to (\lambda^{(i)},{l}_{i})$ is an edge in $\hat{A}$, for $i=1,\ldots,k$. We write $\Shape(\mathfrak{t})=(\lambda,l)$ and denote 
\begin{align*}
\hat{A}_k^{(\lambda,{l})}={\big\lbrace}
\mathfrak{s}\mid \text{$\mathfrak{s}$ is a path to level $k$ and $\Shape(\mathfrak{s})=(\lambda,l)$}{\big\rbrace}.
\end{align*}  
For $\mathfrak{t}\in\hat{A}_k^{(\lambda,l)}$, a path of the form~\eqref{p-d-e}, let $\mathfrak{t}^{(i)}=(\lambda^{(i)},l_i)$ for $i=0,\ldots,k$.
\end{definition}

\begin{definition}
Assume that $(\lambda,l)\in\hat{A}_k$ and $\mathfrak{t}=\big( (\lambda^{(0)},l_0),(\lambda^{(1)},l_1)\ldots,(\lambda^{(k)},l_k))\in\hat{A}_k^{(\lambda,l)}$. If $r=0,1,\ldots,k$, we denote $\mathfrak{t}^{(r)}=(\lambda^{(r)},l_r)$ and define the truncation of $\mathfrak{t}$ to level $r$ to be the path
\begin{align*}
\mathfrak{t}\downarrow _r=\big( (\lambda^{(0)},l_0),(\lambda^{(1)},l_1)\ldots,(\lambda^{(r)},l_r)).
\end{align*}
\end{definition}

For $i=1,2,\ldots,$ let
\begin{align*}
e_{i-1}^{(l)}=
\begin{cases}
1, &\text{if $l=0$,}\\
\underbrace{e_{i-{2l}+1}e_{i-2l+3}\cdots e_{i-1}}_{\text{${l}$ factors}},&\text{if $l=1,\ldots,\lfloor i/2\rfloor$,}\\
0,&\text{if $l>\lfloor i/2\rfloor$.}
\end{cases}
\end{align*}
If $(\lambda,0)\in\hat{A}_{k-2l}$ and $(\lambda,l)\in\hat{A}_k$, define
\begin{align*}
c_{(\lambda,0)}= \sum_{w\in\mathfrak{S}_\lambda}w &&\text{and}&& c_{(\lambda,l)}= c_{(\lambda,0)}e_{k-1}^{(l)}. 
\end{align*}
Occasionally, to avoid ambiguity, it will be necessary to emphasise the fact that $(\lambda,l)\in\hat{A}_k$ and $c_{(\lambda,l)}\in \mathcal{A}_k$. In these circumstances, we will write $c_{(\lambda,l)}^{(k)}$ in place of $c_{(\lambda,l)}$. 

If $\lambda\in\widehat{H}_{2i-1}$ and $\mu\in\widehat{H}_{2i}$, where $\lambda\to\mu$ in $\widehat{H}$ and $\mu=\lambda\cup\lbrace(j,\mu_j)\rbrace$, let $a=\sum_{r=1}^{j}\mu_j$, and define
\begin{align*}
\bar{u}_{\lambda\to\mu}^{(2i)}=w_{i,a}\sum_{r=0}^{\lambda_j} w_{a,a-r}, && \text{and} && \bar{d}_{\lambda\to\mu}^{\,(2i)}=w_{a,i}. 
\end{align*}
For each $\mu\to\mu$, from level $2i$ to level $2i+1$ in $\widehat{H}$, we define the elements $\bar{d}^{\,(2i+1)}_{\mu\to\mu}=\bar{u}^{\,(2i+1)}_{\mu\to\mu}=1$. 
If $(\lambda,l)\in\hat{A}_{2i-1}$, $(\mu,m)\in\hat{A}_{2i}$, and $(\lambda,l)\to(\mu,m)$ in $\hat{A}$, define 
\begin{align*}
d_{(\lambda,l)\to(\mu,m)}^{(2i)}=
\begin{cases}
\bar{d}_{\lambda\to\mu}^{\,(2i-2l)}e_{2i-2}^{(l)},&\text{if $\lambda\subsetneq\mu$ and $l=m$, }\vspace{3pt}\\
e_{2i-2}^{(l)},&\text{if $\lambda=\mu$ and $l=m-1$.}
\end{cases}
\end{align*}
Similarly, if $(\lambda,l)\in\hat{A}_{2i}$, $(\mu,m)\in\hat{A}_{2i+1}$ and $(\lambda,l)\to(\mu,m)$ in $\hat{A}$, then 
\begin{align*}
d_{(\lambda,l)\to(\mu,m)}^{(2i+1)}=
\begin{cases}
\bar{u}_{\lambda\to\mu}^{(2i-2l)}e_{2i-1}^{(l)},&\text{if $\mu\subsetneq\lambda$ and $l=m-1$, }\vspace{3pt}\\
e_{2i-1}^{(l)},&\text{if $\lambda=\mu$ and $l=m$.}
\end{cases}
\end{align*}
For $\mathfrak{t}=\big( (\lambda^{(0)},l_0),(\lambda^{(1)},l_1)\ldots,(\lambda^{(i)},l_i))\in\hat{A}_{i}^{(\lambda,l)}$, let 
\begin{align*}
d_\mathfrak{t}= d_{(\lambda^{(i-1)},l_{i-1})\to (\lambda^{(i)},l_i)}^{(i)}d_{(\lambda^{(i-2)},l_{2-1})\to (\lambda^{(i-1)},l_{i-1})}^{(i-1)}\cdots d_{(\lambda^{(1)},l_{1})\to (\lambda^{(0)},l_{0})}^{(1)}.
\end{align*}
Occasionally, when it is necessary to emphasise that  $\mathfrak{t}\in\hat{A}_k^{(\lambda,l)}$ and $d_\mathfrak{t}\in \mathcal{A}_k$, we will write $d^{(k)}_\mathfrak{t}$ in place of $d_\mathfrak{t}$. 

\begin{definition}
If $(\lambda,l),(\mu,m)\in\hat{A}_i$, write $(\lambda,{l})\unrhd(\mu,m)$ if either ${l}>m$, or $l=m$ and $\lambda\unrhd\mu$. 
\end{definition}
\begin{theorem}[{\cite[Theorem~5.29]{EG:2012}}]\label{murphybasisthm}
If $i=1,2,\ldots,$ the set
\begin{align}\label{murphybasis}
\mathscr{A}_i=
\left\lbrace 
m_\mathfrak{st}=d_\mathfrak{s}^*c_{(\lambda,l)}d_\mathfrak{t}\mid \mathfrak{s},\mathfrak{t}\in\hat{A}_i^{(\lambda,l)}\text{ and } (\lambda,l)\in\hat{A}_i
\right\rbrace
\end{align}
is an $R$--basis for $\mathcal{A}_i$, and $(\mathcal{A}_i,*,\hat{A},\unrhd,\mathscr{A}_i)$ is a cell datum for $\mathcal{A}_i$. 
\end{theorem}
We recall for later reference some basic properties of the bases~\eqref{murphybasis} which flow from the Jones basic construction and the theory of cellular algebras (for details, the reader is referred to~\cite{MR1376244,  MR2774622, MR2794027, EG:2012}).

For $(\lambda,l)\in\hat{A}_k$, define the two sided ideal
\begin{align*}
\mathcal{A}_{k}^{\rhd(\lambda,{l})}=
\sum_{(\mu,m)\rhd(\lambda,{l})} \mathcal{A}_k c_{(\mu,m)}\mathcal{A}_k
\end{align*}
where the sum is taken over $(\mu,m)\in\hat{A}_k$ such that $(\mu,m)\rhd(\lambda,{l})$. 
\begin{lemma}\label{cellaction}
Let $(\lambda,l)\in\hat{A}_k$ and $\mathfrak{s,t}\in\hat{A}_k$. If $p\in\hat{A}_k$, then 
\begin{align}\label{rightaction}
m_\mathfrak{st}p\equiv \sum_{\mathfrak{u}}r_\mathfrak{u}m_\mathfrak{su}\mod \mathcal{A}_k^{\rhd(\lambda,l)},
\end{align}
where $r_\mathfrak{u}\in R$, for $\mathfrak{u}\in\hat{A}_k^{(\lambda,l)}$, depend only on $p$ and $\mathfrak{t}$. 
\end{lemma}

\begin{definition}\label{cellmodule}
Let $(\lambda,l)\in\hat{A}_k$. The \emph{cell module} $\Delta^{(\lambda,l)}_{k}$ is the right $\mathcal{A}_k$--module  with $R$--basis $\lbrace m_\mathfrak{t}\mid\mathfrak{t}\in\hat{A}_k^{(\lambda,l)}\rbrace$ and right $\mathcal{A}_k$ action
\begin{align*}
m_\mathfrak{t}p\equiv \sum_{\mathfrak{u}}r_\mathfrak{u}m_\mathfrak{u}, \quad\text{for $p\in\mathcal{A}_k$,}
\end{align*}
where $r_\mathfrak{u}\in R$, for $\mathfrak{u}\in\hat{A}_k^{(\lambda,l)}$, are determined by~\eqref{rightaction}. 
\end{definition}
 Let $(\lambda,l)\in\hat{A}_k$ and $\mathfrak{s,t}\in\hat{A}_k^{(\lambda,l)}$. The map $\langle\,,\rangle:\Delta^{(\lambda,l)}_{k}\times \Delta^{(\lambda,{l})}_{k}\to R$, defined by 
\begin{align}\label{f-d-1}
\langle m_\mathfrak{u}, m_\mathfrak{v}\rangle
m_\mathfrak{st}\equiv 
m_\mathfrak{su} m_\mathfrak{vt} \mod A_{k}^{\rhd(\lambda,l)},\qquad\text{for $\mathfrak{u},\mathfrak{v}\in\hat{A}_{k}^{(\lambda,l)}$,}
\end{align}
is a symmetric bilinear form on $\Delta^{(\lambda,l)}_{k}$. It is shown in ~\cite[Sect.~2]{MR1376244} that the form defined by~\eqref{f-d-1} is independent of the choice of $\mathfrak{s}$ and $\mathfrak{t}$. 
The bilinear form~\eqref{f-d-1} may be used in principle to classify the irreducible representations of partition algebras over a field~\cite[Theorem~(3.4)]{MR1376244}. In Sect~\ref{det-proof} we give an explicit recursion for the form~\eqref{f-d-1} and use these results to write down expressions for the off-diagonal structure constants for each contraction $e_i$ relative to seminormal bases for cell modules. 

The salient feature of the construction~\cite{EG:2012} of the basis for $\Delta^{(\lambda,l)}_k$ in Definition~\ref{cellmodule} is that it realises an explicit  filtration of the restriction of  $\Delta^{(\lambda,l)}_k$ to $\mathcal{A}_{k-1}$  by cell modules for $\mathcal{A}_{k-1}$. 
\begin{proposition}[{\cite[Lemma~3.13]{EG:2012}}]\label{filtration}
Assume that $(\lambda,l)\in\hat{A}_k$ and $(\rho,r)\in\hat{A}_{k-1}$, where $(\rho,r)\to(\lambda,l)$ in $\hat{A}$. Let $N^{\unrhd(\rho,r)}\subseteq \Delta_{k}^{(\lambda,l)}$ and $N^{\rhd(\rho,r)}\subseteq \Delta_{k}^{(\lambda,l)}$ denote the $\mathcal{A}_{k-1}$-submodules respectively generated by 
\begin{align*}
&\lbrace m_\mathfrak{t}\in \Delta_{k}^{(\lambda,l)}\mid \Shape(\mathfrak{t}\downarrow_{k-1})\unrhd(\rho,r)\rbrace
\qquad\text{and}\qquad
\lbrace m_\mathfrak{t}\in \Delta_{k}^{(\lambda,l)}\mid \Shape(\mathfrak{t}\downarrow_{k-1})\rhd(\rho,r)\rbrace.
\end{align*}
Then the linear map $N^{\unrhd(\rho,r)}/N^{\rhd(\rho,r)} \to \Delta^{(\rho,r)}_{k-1}$ given by 
\begin{align*}
m_\mathfrak{s}+N^{\rhd(\rho,r)}&\mapsto m_\mathfrak{t}, \qquad\text{if $\mathfrak{s}\in\hat{A}_k^{(\lambda,l)},$ $\mathfrak{t}\in\hat{A}^{(\rho,r)}$ and  $\mathfrak{s}\downarrow_{k-1}=\mathfrak{t},$}
\end{align*}
is an isomorphism of $\mathcal{A}_{k-1}$-modules.
\end{proposition}
Goodman and Graber~\cite[Definition~2.16]{MR2774622} introduced the following order on paths in the general context of JM elements in coherent towers of cellular algebras. 
\begin{definition}
Let $\mathfrak{s}=((\lambda^{(0)},l_0),\ldots,(\lambda^{(k)},l_k))$ and $\mathfrak{t}=((\mu^{(0)},m_0),\ldots,(\mu^{(k)},m_k))$ be paths in $\hat{A}$. We say that $\mathfrak{s}$ \emph{precedes} $\mathfrak{t}$ in \emph{reverse lexicographic order} (denoted $\mathfrak{t}\succcurlyeq\mathfrak{s}$) if $\mathfrak{s}=\mathfrak{t}$, or if for the last index $i$ such that $(\lambda^{(i)},l_i)\ne (\mu^{(i)},m_i)$ we have $(\lambda^{(i)},l_i)\unrhd (\mu^{(i)},m_i)$. Let $\mathfrak{t}\succ\mathfrak{s}$ denote the fact that $\mathfrak{t}\succcurlyeq\mathfrak{s}$ and $\mathfrak{s}\ne \mathfrak{t}$. 
\end{definition}
\begin{lemma}
If $(\lambda,l)\in\hat{A}_k$, then there exists a unique element $\mathfrak{t}^{(\lambda,l)}\in \hat{A}_k^{(\lambda,l)}$ which is maximal with respect to the reverse lexicographic order $\succcurlyeq$ on on $\hat{A}_k^{(\lambda,l)}$. 
\end{lemma}
\begin{proof}
Define a sequence $\mathfrak{t}^{(\lambda,l)}=((\lambda^{(0)},l_0),\ldots,(\lambda^{(k)},l_k))$ by: 
\begin{enumerate}
\item $(\lambda^{(2i)},l_{2i})=(\emptyset,i)$, for $i=0,1,\ldots,l,$
\item  $(\lambda^{(2i+2)},l_{2i+2})= (\lambda^{(2i)}\cup \lbrace\alpha_{i}\rbrace,l_{2i}) $ for $l\le i$ and $i<\lfloor k/2\rfloor$, where the node $\alpha_i$ has minimal row index (or maximal column index) in $A(\lambda^{(2i)})\cap\lambda$, and 
\item $(\lambda^{(2i+1)},l_{2i+1})=(\lambda^{(2i)},l_{2i})$, for $0\le i\le \lfloor k/2\rfloor$.
\end{enumerate}
Then $\mathfrak{t}^{(\lambda,l)}$ is manifestly an element of $\hat{A}_k^{(\lambda,l)}$, which is unique with respect to the property of being maximal under the reverse lexicographic order on $\hat{A}_k^{(\lambda,l)}$. 
\end{proof}

It will be convenient to write the basis $\lbrace m_\mathfrak{t}\mid \mathfrak{t}\in\hat{A}_k^{(\lambda,l)}\}$ for $\Delta^{(\lambda,l)}_{k}$ relative to the maximal element $m_{\mathfrak{t}^{(\lambda,l)}}$.  For this purpose we define the following branching coefficients. 

If $(\lambda,l)\in\hat{A}_{2i-1}$, $(\mu,m)\in\hat{A}_{2i}$, and $(\lambda,l)\to(\mu,m)$ in $\hat{A}$, let 
\begin{align*}
p^{(2i)}_{(\lambda,l)\to(\mu,m)}=
\begin{cases}
w_{m,i}, &\text{if $\lambda=\mu$ and $l=m-1$,}\\
w_{a,i}, &\text{if $\mu=\lambda\cup\lbrace (j,\mu_j)\rbrace$ and $l=m$,}
\end{cases}
\end{align*}
where, in the second case, we have written $a=l+\sum_{r=1}^{j}\mu_r$.  Similarly, if $(\lambda,l)\in\hat{A}_{2i}$, $(\mu,m)\in\hat{A}_{2i+1}$, and $(\lambda,l)\to(\mu,m)\in\hat{A}$, let 
\begin{align*}
p^{(2i+1)}_{(\lambda,l)\to(\mu,m)}=
\begin{cases}
1,&\text{if $\lambda=\mu$ and $l=m$,}\\
w_{m,i}e_{2i}w_{i,a}\sum_{r=0}^{\mu_j}w_{a,a-r},& \text{if $\lambda=\mu\cup\lbrace(j,\lambda_j)\rbrace$ and $l=m-1$,}
\end{cases}
\end{align*}
where, in the second case, we have written $a=l+\sum_{r=1}^{j}\lambda_r$. 

For $\mathfrak{t}=\big( (\lambda^{(0)},l_0),(\lambda^{(1)},l_1)\ldots,(\lambda^{(i)},l_i))\in\hat{A}_{i}^{(\lambda,l)}$, let 
\begin{align*}
p_\mathfrak{t}= p_{(\lambda^{(i-1)},l_{i-1})\to (\lambda^{(i)},l_i)}^{(i)}p_{(\lambda^{(i-2)},l_{2-1})\to (\lambda^{(i-1)},l_{i-1})}^{(i-1)}\cdots p_{(\lambda^{(1)},l_{1})\to (\lambda^{(0)},l_{0})}^{(1)}.
\end{align*}
When when it is necessary to emphasise that $p_\mathfrak{t}\in \mathcal{A}_k$, for $\mathfrak{t}\in\hat{A}_k^{(\lambda,l)}$, we will write $p^{(k)}_\mathfrak{t}$ in place of $p_\mathfrak{t}$. We require the following distinguished element of $\hat{A}^{(\lambda,l)}$. 
\begin{definition}
Assume that $(\lambda,l)\in\hat{A}_k$ and 
\[
(\rho,l) =\min\lbrace(\mu,m)\in \hat{A}_{k-1}\mid (\mu,m)\to (\lambda,l)\text{ in }\hat{A}\rbrace,
\]
where the $\min$ is taken with respect to $\unrhd$.  Denote by $\mathfrak{u}=\mathfrak{u}^{(\lambda,l)}$ the element in $\hat{A}^{(\lambda,l)}$ defined by the condition that   $\mathfrak{u}\downarrow_{k-2l}=\mathfrak{t}^{(\lambda,0)}$ and 
\begin{align*}
\mathfrak{u}^{(k-2l+i)}=
\begin{cases}
(\rho,r),&\text{if $i=2r+1$,}\\
(\lambda,r),&\text{if $i=2r$,}
\end{cases}
\end{align*}
for $r=1,2,\ldots,l$. 
\end{definition}
\begin{lemma}\label{rewrite}
Assume that $(\lambda,l)\in\hat{A}_k$. Then the following statements hold:
\begin{enumerate}
\item\label{rewrite:1} $c_{(\lambda,l)}d_{\mathfrak{u}^{(\lambda,l)}} =c_{(\lambda,l)}$. 
\item\label{rewrite:1.1}  $d_{\mathfrak{t}^{(\lambda,l)}}=e_{k-2}^{(l)} e_{k-3}^{(l)}\cdots e_{2l-1}^{(l)}$. 
\item\label{rewrite:2} $p_{\mathfrak{t}^{(\lambda,l)}}=1$ and $m_\mathfrak{t}=m_{\mathfrak{t}^{(\lambda,l)}}p_\mathfrak{t}$ for all $\mathfrak{t}\in\hat{A}_k^{(\lambda,l)}$.
\end{enumerate}
\end{lemma}
\begin{proof}
\eqref{rewrite:1} 
Let $\mathfrak{u}=\mathfrak{u}^{(\lambda,l)}$ and $\rho=\min\lbrace\mu\in \widehat{H}_{k-2l-1}\mid \mu\to \lambda\text{ in }\widehat{H}\rbrace$. If $l=1$, the definitions give  $d^{(k)}_{\mathfrak{u}^{(k-1)}\to\mathfrak{u}^{(k)}} =d^{(k)}_{(\rho,0)\to(\lambda,1)}=1$. Since $\mathfrak{u}\downarrow_{k-1}=\mathfrak{t}^{(\rho,0)}$, we have $d_{\mathfrak{u}^{(i-1)}\to\mathfrak{u}^{(i)}}=1$ for $i=1,\ldots,k-1$, and  $d_\mathfrak{u}^{(k)}=1$ when $l=1$. If $l>1$, then 
\begin{align*}
d_{\mathfrak{u}^{(k-1)}\to \mathfrak{u}^{(k)}}^{(k)}= d^{(k)}_{(\rho,l-1)\to(\lambda,l)}= e_{k-2}^{(l-1)} \qquad\text{and}\qquad  d_{\mathfrak{u}^{(k-2)}\to \mathfrak{u}^{(k-1)}}^{(k-1)}= d^{(k)}_{(\lambda,l-1)\to(\rho,l-1)}= e_{k-3}^{(l-1)}.
\end{align*}
Thus, using the contraction relations $e_{i+1}e_ie_{i+1}=e_{i+1}$, 
\begin{align*}
c_{(\lambda,l)}d_{\mathfrak{u}^{(k-1)}\to \mathfrak{u}^{(k)}}^{(k)} d_{\mathfrak{u}^{(k-2)}\to \mathfrak{u}^{(k-1)}}^{(k-1)} =c_{(\lambda,0)}e_{k-1}^{(l)} e_{k-2}^{(l-1)}e_{k-3}^{(l-1)}=c_{(\lambda,0)}e_{k-1}^{(l)} =c_{(\lambda,l)}. 
\end{align*}
Proceeding by induction on $l$, we obtain $c_{(\lambda,l)}d_{\mathfrak{u}^{(\lambda,l)}} =c_{(\lambda,l)}$. 

\eqref{rewrite:1.1} Let us write $\mathfrak{u}^{(\lambda,l)}=((\lambda^{(0)},l_0),\ldots,(\lambda^{(k)},l_k))$. Then, $d_{(\lambda^{(k-2l-1)},l)\to(\lambda^{(k-2l)},l)}^{(k)}=e_{k-2}^{(l)}$, and if $k$ is odd,
\begin{align*}
c_{(\lambda,l)}^{(k)}d_{(\lambda^{(k-2l-1)},l)\to(\lambda^{(k-2l)},l)}^{(k)}=e_{k-1}^{(l)}c_{(\lambda,l)}^{(k-1)}, 
\end{align*}
while, if $k=2i$ and $\lambda=\lambda^{(k-2l-1)}\cup\lbrace (j,\lambda_j)\rbrace$, 
\begin{align*}
c_{(\lambda,l)}^{(k)}d_{(\lambda^{(k-2l-1)},l)\to(\lambda^{(k-2l)},l)}^{(k)}={\displaystyle\sum_{r=0}^{\lambda_{j}-1} w_{i-l-r,i-l}e_{k-1}^{(l)} c_{(\lambda^{(k-2l-1)},l)}^{(k-1)}}.
\end{align*}
Therefore, using part~\eqref{rewrite:1}, we may define a path $\mathfrak{s}\in\hat{A}_{k}^{(\lambda,l)}$ by $m_\mathfrak{s}=m_{\mathfrak{u}^{(\lambda,l)}}e_{k-2}^{(l)}$, with the property that $\mathfrak{s}\downarrow_{k-1}=\mathfrak{u}^{(\lambda^{(k-2l-1)},l)}$. By induction on $k$, we have $m_\mathfrak{s}e_{k-3}^{(l)}e_{k-4}^{(l)}\cdots e_{2l-1}^{(l)}=m_{\mathfrak{t}^{(\lambda,l)}}$. 

\eqref{rewrite:2} If  $\mathfrak{t}=\mathfrak{t}^{(\lambda,l)}$, the definitions show that $p^{(i)}_{\mathfrak{t}^{(i-1)}\to\mathfrak{t}^{(i)}}=1$ for $i=1,\ldots,k$. Thus $p_{\mathfrak{t}^{(\lambda,l)}}=1$. We now prove that, for all $\mathfrak{t}\in\hat{A}_k^{(\lambda,l)}$, 
\begin{align}\label{cbasis} c_{(\lambda,l)}d_\mathfrak{t} =c_{(\lambda,l)}d_{\mathfrak{t}^{(\lambda,l)}}p_\mathfrak{t}.
\end{align}
If $l=0$, then $d_{\mathfrak{t}^{(\lambda,l)}}=1$. Thus, if $(\lambda^{(i-1)},0)\in\hat{A}_{i-1}$ and $(\lambda^{(i)},0)\in\hat{A}_i$, where $(\lambda^{(i-1)},0)\to (\lambda^{(i)},0)$ in $\hat{A}$, the definitions give $d_{(\lambda^{(i-1)},0)\to \lambda^{(i)},0)}^{(i)}= p_{(\lambda^{(i-1)},0)\to (\lambda^{(i)},0)}^{(i)}$. Hence~\eqref{cbasis} is true for all $\mathfrak{t}\in\hat{A}_k^{(\lambda,0)}$. In particular, we have shown that~\eqref{cbasis} holds for all $\mathfrak{t}\in\hat{A}_1^{(\emptyset,0)}$. 

Now assume that $l>0$ and let $\mathfrak{t}=((\lambda^{(0)},l_0),\ldots,(\lambda^{(k)},l_k)) \in\hat{A}_k^{(\lambda,l)}$. Write  $\mathfrak{s} =\mathfrak{t}\downarrow_{k-1}$. We use induction on $k$ to show that~\eqref{cbasis} holds for $\mathfrak{t}$. The cases $k=2i$ and $k=2i+1$ are treated separately below. 

\textsc{Case 1.} Assume that $k=2i$. Let  $l_{k-1}=l$ and  $\lambda=\lambda^{(k-1)}\cup\lbrace(j,\lambda_j)\rbrace$.  Then 
\begin{align*}
d_{(\lambda^{(k-1)},l)\to(\lambda,l)}^{(k)}=w_{a,i-l}e_{k-1}^{(l)}&&\text{and}&& p_{(\lambda^{(k-1)},l)\to(\lambda,l)}^{(k)}=w_{l+a,i},
\end{align*}
where $a=\sum_{r=0}^{j}\lambda_r$. Since 
\begin{align*}
c_{(\lambda,0)}e_{k-1}^{(l)}w_{a,i-l}e_{k-2}^{(l)}= e_{k-1}^{(l)}\sum_{r=0}^{\lambda_j-1}w_{a-r,a}w_{a,i-l} c_{(\lambda^{(k-1)},l)},
\end{align*}
by induction on $k$, we have 
\begin{align*}
c_{(\lambda,l)}d_\mathfrak{t}&= e_{k-1}^{(l)}\sum_{r=0}^{\lambda_j-1}w_{a-r,a}w_{a,i-l} c_{(\lambda^{(k-1)},l)} d_{\mathfrak{s}}\\
&=e_{k-1}^{(l)}\sum_{r=0}^{\lambda_j-1}w_{a-r,a}w_{a,i-l} c_{(\lambda^{(k-1)},l)} d_{\mathfrak{t}^{(\lambda^{(k-1)},l)}} p_{\mathfrak{s}}\\
&= c_{(\lambda,l)}w_{a,i-l}e_{k-2}^{(l)} e_{k-3}^{(l)}\cdots e_{2l-1}^{(l)}p_\mathfrak{s}. 
\end{align*}
It therefore suffices to show that 
\begin{align}\label{firstcase}
w_{a,i-l}e_{k-1}^{(l)} e_{k-2}^{(l)}\cdots e_{2l-1}^{(l)}=e_{k-1}^{(l)} e_{k-2}^{(l)}\cdots e_{2l-1}^{(l)}w_{a+l,i}. 
\end{align}
To this end,  note that in the diagram presentation of $A_k=A_{2i}$, 
\begin{align*}
\begin{matrix}
\begin{tikzpicture}
\foreach \x in {1,1.8,2.6}
\filldraw [black] (\x,1) circle (1.2pt);
\foreach \x in {1,1.8,2.6}
\filldraw [black] (\x,0) circle (1.2pt);
\node at (3.4,-0.05) {$\displaystyle\ldots \color{black}$};
\foreach \x in {4.2,5.0,5.8,6.6,7.4}
\filldraw [black] (\x,0) circle (1.2pt);
\foreach \x in {5.0,5.8,6.6,7.4,8.2}
\filldraw [black] (\x,1) circle (1.2pt);
\node at (9.0,0.95) {$\displaystyle\ldots \color{black}$};
\foreach \x in {9.8,10.6}
\filldraw [black] (\x,1) circle (1.2pt);
\foreach \x in {9.8,10.6}
\filldraw [black] (\x,0) circle (1.2pt);
\draw (1,1) -- (5.8,0);
\draw (1.8,1) -- (6.6,0);
\draw (2.6,1) -- (7.4,0);
\draw (5.0,1) -- (9.8,0);
\draw (5.8,1) -- (10.6,0);
\node at (6.45,0.46) {$\displaystyle\ldots \color{black}$};
\node at (1,-0.3) {$1$};
\node at (1.8,-0.3) {$2$};
\node at (5.0,-0.3) {$l$};
\node at (10.6,-0.3) {$i$};
\node at (-0.87,0.55) {$e_{k-1}^{(l)}e_{k-2}^{(l)}\cdots e_{2l-1}^{(l)}=$};
\end{tikzpicture}
\end{matrix}\ .
\end{align*}
Hence 
\begin{align}\label{abraid}
s_je_{k-1}^{(l)}e_{k-2}^{(l)}\cdots e_{2l-1}^{(l)}=e_{k-1}^{(l)}e_{k-2}^{(l)}\cdots e_{2l-1}^{(l)}s_{j+l},&&\text{for $j=1,\ldots,i-l-1$}
\end{align}
and the relation~\eqref{firstcase} follows.

{\textsc{Case~2.}} Assume that $k=2i$ and $(\lambda^{(k-1)},l_{k-1})=(\lambda,l-1)$. Then 
\begin{align*}
d_{(\lambda,l-1)\to(\lambda,l)}^{(k)}=e_{k-2}^{(l-1)} &&\text{and}&& p_{(\lambda,l-1)\to(\lambda,l)}^{(k)}=w_{l,i}. 
\end{align*}
By induction on $k$, 
\begin{align*}
c_{(\lambda,l)}d_\mathfrak{t} &=c_{(\lambda,0)}e_{k-1}^{(l)}e_{k-2}^{(l-1)} d_\mathfrak{s}=e_{k-1}^{(l)}c_{(\lambda,l-1)} d_\mathfrak{s} = e_{k-1}^{(l)}c_{(\lambda,l-1)} d_{\mathfrak{t}^{(\lambda,l-1)}} p_\mathfrak{s} =c_{(\lambda,l)}e_{k-2}^{(l-1)} d_{\mathfrak{t}^{(\lambda,l-1)}} p_\mathfrak{s}.
\end{align*}
The statement will follow from the relation
\begin{align}\label{secondcase}
e_{k-1}^{(l)}e_{k-2}^{(l-1)}e_{k-3}^{(l-1)}\cdots e_{2l-3}^{(l-1)} =e_{k-1}^{(l)}e_{k-2}^{(l)}e_{k-3}^{(l)}\cdots e_{2l-1}^{(l)}w_{l,i}
\end{align}
which can be verified in the diagram presentation for $A_k$ by simply concatenating the diagram given above for $e_{k-1}^{(l)}e_{k-2}^{(l)}e_{k-3}^{(l)}\cdots e_{2l-1}^{(l)}$ with the permutation $w_{l,i}$. 

{\textsc{Case~3.}} Assume that $k=2i+1$. Let $l_{k-1}=l-1$ and $\lambda^{(k-1)}=\lambda \cup\lbrace(j,\lambda_j^{(k-1)})\rbrace$. Then
\begin{align*}
d_{(\lambda^{(k-1)},l-1)\to(\lambda,l)}^{(k)}= w_{i-l+1,a}\sum_{r=0}^{\lambda_j}w_{a,a-r}e_{k-2}^{(l-1)}
\end{align*}
and
\begin{align*} p_{(\lambda^{(k-1)},l-1)\to(\lambda,l)}^{(k)}= w_{l,i}e_{2i}w_{i,a+l-1}\sum_{r=0}^{\lambda_j} w_{a+l-1,a+l-1-r},
\end{align*} 
where $a=1+\sum_{r=1}^j\lambda_r$. Now,
\begin{align*}
c_{(\lambda,l)}^{(k)} d_{\mathfrak{t}^{(\lambda,l)}} d_\mathfrak{t}&= c_{(\lambda,l)}^{(k)} d_{\mathfrak{t}^{(\lambda,l)}}p_{(\lambda^{(k-1)},l-1) \to(\lambda,l)}^{(k)}d_\mathfrak{s}\\ &=c_{(\lambda,l)}^{(k)} e_{k-2}^{(l)}e_{k-3}^{(l)}\cdots e_{2l-1}^{(l)}w_{l,i}e_{2i}w_{i,a+l-1} \sum_{r=0}^{\lambda_j} w_{a+l-1,a+l-1-r}d_\mathfrak{s},
\end{align*}
while induction on $k$ gives
\begin{align*}
c_{(\lambda,l)}^{(k)}d_\mathfrak{t}&=c_{(\lambda,l)}^{(k)} d_{(\lambda^{(k-1)},l-1)\to(\lambda,l)}^{(k)} d_\mathfrak{s} =e_{k-1}^{(l)}c_{(\lambda,0)} w_{i-l+1,a}\sum_{r=0}^{\lambda_j}w_{a,a-r}e_{k-2}^{(l-1)}d_\mathfrak{s}\\
&=e_{k-1}^{(l)}w_{i-l+1,a}c_{(\lambda^{(k-1)},l-1)}^{(k-1)} d_\mathfrak{s} = e_{k-1}^{(l)}w_{i-l+1,a}c_{(\lambda^{(k-1)},l-1)}^{(k-1)} d_{\mathfrak{t}^{(\lambda^{(k-1)},l-1)}}p_\mathfrak{s}\\
&= c_{(\lambda,l)}^{(k)}w_{i-l+1,a}\sum_{r=0}^{\lambda_j}w_{a,a-r}e_{k-2}^{(l-1)} e_{k-3}^{(l-1)}\cdots e_{2l-3}^{(l-1)}   p_\mathfrak{s}. 
\end{align*}
It therefore suffices to show that
\begin{equation}\label{casetwo}
\begin{split}
&e_{k-1}^{(l)}w_{i-l-1,a}\sum_{r=0}^{\lambda_j} w_{a,a-r} e_{k-2}^{(l-1)}e_{k-3}^{(l-1)}\cdots e_{2l-3}^{(l-1)}\\&=  e_{k-1}^{(l)}e_{k-2}^{(l)}\cdots e_{2l-1}^{(l)} w_{l,i}e_{2i}w_{i,a+l-1}\sum_{r=0}^{\lambda_j} w_{a+l-1,a+l-1-r}.
\end{split}
\end{equation}
To establish the above relation, we first use~\eqref{abraid} to observe that 
\begin{align*}
w_{i-l-1,a}\sum_{r=0}^{\lambda_j} w_{a,a-r} e_{k-2}^{(l-1)}e_{k-3}^{(l-1)}\cdots e_{2l-3}^{(l-1)}= e_{k-2}^{(l-1)}e_{k-3}^{(l-1)}\cdots e_{2l-3}^{(l-1)} w_{i,a+l-1}\sum_{r=0}^{\lambda_j} w_{a+l-1,a+l-1-r}.
\end{align*}
The relation~\eqref{casetwo} will therefore follow from 
\begin{align*}%\label{casethree}
e_{k-1}^{(l)}e_{k-2}^{(l-1)}e_{k-3}^{(l-1)}\cdots e_{2l-3}^{(l-1)}= e_{k-1}^{(l)}e_{k-2}^{(l)}\cdots e_{2l-1}^{(l)}w_{l,i}e_{2i}, 
\end{align*}
which can easily be established by induction or in the diagram presentation for $A_k$. 

{\textsc{Case~4.}} Assume that $(\lambda^{(k-1)},l_{k-1})=(\lambda,l)$. Then 
\begin{align*}
d^{(k)}_{(\lambda,l)\to(\lambda,l)}=e_{k-2}^{(l)}&& \text{and} && p_{(\lambda,l)\to(\lambda,l)}^{k)}=1. 
\end{align*}
By induction on $k$, 
\begin{align*}
c_{(\lambda,l)}^{(k)}d_\mathfrak{t}& =c_{(\lambda,l)}^{(k)}e_{k-2}^{(l)}d_\mathfrak{s} = e_{k-1}^{(l)}c_{(\lambda,l)}^{(k-1)}d_\mathfrak{s} = e_{k-1}^{(l)}c_{(\lambda,l)}^{(k-1)} d_{\mathfrak{t}^{(\lambda,l)}}^{(k-1)}p_\mathfrak{s} \\ 
&= c^{(k)}_{(\lambda,l)}e_{k-2}^{(l)}e_{k-3}^{(l)}\cdots e_{2l-1}^{(l)}p_\mathfrak{s} = c_{(\lambda,l)}^{(k)}d_{\mathfrak{t}^{(\lambda,l)}}^{(k)} p_{(\lambda,l)\to(\lambda,l)}^{(k)}p_\mathfrak{s} =c^{(k)}_{(\lambda,l)}d_{\mathfrak{t}^{(\lambda,l)}}^{(k)} p_\mathfrak{t},
\end{align*}
as required. 
\end{proof}
For $(\lambda,l)\in\hat{A}_i$, we define the element
\begin{align*}
a_{(\lambda,l)}=a_{(\lambda,l)}^{(i)}=d_{\mathfrak{t}^{(\lambda,l)}}^*c_{(\lambda,l)}d_{\mathfrak{t}^{(\lambda,l)}}\in A_i. 
\end{align*}
\begin{corollary}
If $i=1,2,\ldots,$ the set 
\begin{align*}
\mathscr{A}_i=
\left\lbrace 
m_\mathfrak{st}=p_\mathfrak{s}^*a_{(\lambda,l)}^{(i)}p_\mathfrak{t}\mid \mathfrak{s},\mathfrak{t}\in\hat{A}_i^{(\lambda,l)}\text{ and } (\lambda,l)\in\hat{A}_i\right\rbrace
\end{align*} 
is an $R$--basis for $\mathcal{A}_i$, and $(\mathcal{A}_i,*,\hat{A},\unrhd,\mathscr{A}_i)$ is a cell datum for $\mathcal{A}_i$. 
\end{corollary}
Recall that if $a=(i,j)$ is a node, $c(a)=j-i$ is the content of $a$.
\begin{definition}
Assume that $(\lambda,l)\in\hat{A}_k$ and $\mathfrak{t}=((\lambda^{(0)},l_0),\ldots,(\lambda^{(k)},l_k))\in\hat{A}_k^{(\lambda,l)}$. Let $i\in \mathbb{Z}$, where $0< i\le k$.  If $i$ is even, define  
\begin{align*}
c_\mathfrak{t}(i)=
\begin{cases}
z-|\lambda^{(i)}|,&\text{if $\lambda^{(i)}=\lambda^{(i-1)}$,}\\
c(a),&\text{if $\lambda^{(i)}=\lambda^{(i-1)}\cup\lbrace a\rbrace$,}
\end{cases}
\end{align*}
and, if $i$ is odd, let
\begin{align*}
c_\mathfrak{t}(i)=
\begin{cases}
|\lambda^{(i)}|, &\text{if $\lambda^{(i)}=\lambda^{(i-1)}$,}\\
z-c(a),&\text{if $\lambda^{(i)}=\lambda^{(i-1)}\setminus \lbrace a\rbrace$.}
\end{cases}
\end{align*}
\end{definition}
Halverson and Ram \cite[Theorem~3.37]{MR2143201} prove that if $\mathfrak{t} \in\hat{A}_{k}^{(\lambda,l)}$, then  $\{c_\mathfrak{t}(i)\mid i=1,\ldots,k\}$ are precisely the eigenvalues for the action of the JM subalgebra $\mathscr{L}_k$ on the basis element corresponding to the path $\mathfrak{t}$ in the irreducible representation of $A_k(n)$ indexed by $(\lambda,l)$. The next statement is proved in~\cite[Theorem~3.35]{MR2143201}. 
\begin{lemma}\label{centralaction} (1) If $(\lambda,l)\in\hat{A}_{2k}$, the central element $z_{2k}=L_1+L_2+\cdots+L_{2k}$ acts on $\Delta_{2k}^{(\lambda,l)}$ as multiplication by the scalar
\begin{align}\label{centraleven}
z_{2k}(\lambda,l)=lz+\binom{k-l}{2}+\sum_{a\in\lambda} c(a).
\end{align}
(2) If $(\lambda,l)\in\hat{A}_{2k+1}$, the central element $z_{2k+1}=L_1+L_2+\cdots+L_{2k+1}$ acts on $\Delta_{2k+1}^{(\lambda,l)}$ as multiplication by the scalar
\begin{align}\label{centralodd}
z_{2k+1}(\lambda,l)=lz+\binom{k-l+1}{2}+\sum_{a\in\lambda} c(a).
\end{align}
\end{lemma}
\begin{proof}
We prove~\eqref{centraleven}. Observing that $\sigma_{2i}\equiv 1\mod \mathcal{A}_{2k-2l}^{\rhd(\lambda,0)}$, for $1\le i<k-l$, and using~\cite[Theorem~3.32]{MR1711316}, we may write
\begin{align}\label{congruences}
c_{(\lambda,0)}L_{2i-1}\equiv (i-1)c_{(\lambda,0)} \qquad \text{and}\qquad c_{(\lambda,0)}L_{2i}\equiv c_{\mathfrak{t}^{(\lambda,0)}}(i)c_{(\lambda,0)}
\end{align}
modulo $\mathcal{A}_{2k-2l}^{\rhd (\lambda,0)}$ for $i=1,2,\ldots, k-l$. Using the relation $e_{2i-1}(L_{2i-1}+ L_{2i})=ze_{2i-1}$ gives 
\begin{align*}
c_{(\lambda,l)}z_{2k}=c_{(\lambda,0)}e_{2k-1}^{(l)}z_{2k}& =e_{2k-1}^{(l)}\sum_{i=1}^{2k-2l}c_{(\lambda,0)}  L_{i}+ lz c_{(\lambda,l)}.
\end{align*}
Applying the congruences~\eqref{congruences} to the sum on the right hand side of the last expression yields the relation~\eqref{centraleven}. The proof of~\eqref{centralodd} is similar. 
\end{proof}

By Proposition~\ref{a-c-r} and Lemma~\ref{centralaction}, $\lbrace L_i\mid i\ge 1\rbrace$ is a family of additive JM elements for the tower of algebras $(\mathcal{A}_i)_{i\ge0}$ in the sense of~\cite[Definition~3.4]{MR2774622}. Lemma~\ref{centralaction} and~\cite[Proposition~3.7]{MR2774622} together show that $\mathscr{L}_k$ acts on cell modules as follows.
\begin{proposition}\label{u-t-c}
Assume that $(\lambda,l)\in\hat{A}_k$. If $\mathfrak{t}\in\hat{A}_k$ and $1\le i\le k$, then 
\begin{align*}
m_\mathfrak{t}L_i=c_\mathfrak{t}(i)m_t +\sum_{\mathfrak{s}\succ \mathfrak{t}}r_\mathfrak{s}m_\mathfrak{s}
\end{align*}
for some $r_\mathfrak{s}\in R$, which depend on $i$ and $\mathfrak{t}$. 
\end{proposition}

\section{A Seminormal Form}\label{s-n-s}
The integral basis for the cell module $\Delta_{k}^{(\lambda,l)}$ given in Proposition~\ref{cellmodule} does not in general diagonalise the JM subalgebra $\mathscr{L}_k$. In this section, we extend scalars to the field of fractions of $R$ and, following Mathas~\cite{MR2414949}, use the triangular action of the JM elements on cell modules to define irreducible representations of the partition algebras on cell modules relative to relative to a basis of eigenvectors for $\mathscr{L}_k$ over the field of fractions of $R$.

Let $\mathbb{F}$ denote the field of fractions of $R=\mathbb{Z}[z]$ and define
\begin{align*}
A_{k}(z)=\mathcal{A}_{k}(z)\otimes_R \mathbb{F}.
\end{align*}
We will write $A_k=A_k(z)$. To simplify notation, we will also freely write $L_i,e_{i},m_\mathfrak{st},$ and so on, in place of $L_i\otimes 1_\mathbb{F}$, $e_i\otimes 1_\mathbb{F}$, $m_\mathfrak{st}\otimes 1_\mathbb{F}$, and so on.  If $(\lambda,{l})\in\hat{A}_k$, define  
\begin{align*}
A_{k}^{\rhd(\lambda,{l})}=\mathcal{A}_{k}^{\rhd(\lambda,{l})}\otimes_R \mathbb{F}
&&\text{and}&&\Delta_{k,\mathbb{F}}^{(\lambda,l)}=\Delta_{k}^{(\lambda,l)}\otimes_R \mathbb{F}.
\end{align*}
\begin{definition}\label{basis:1}
Let $(\lambda,{l})\in\hat{A}_k$ and   $\mathfrak{s,t}\in\hat{A}_{k}^{(\lambda,{l})}$. Define
\begin{align*}
F_\mathfrak{t}=
\prod_{
1\le i\le k
}
\prod_{
\substack{
\mathfrak{u}\in\hat{A}_{k}^{(\rho,r)}\\
c_\mathfrak{u}(i)\ne c_\mathfrak{t}(i)}}
\frac{L_i-c_\mathfrak{u}(i)}{c_\mathfrak{t}(i)-c_\mathfrak{u}(i)},
\end{align*}
where the product is taken over $(\rho,r)\in\hat{A}_k$. Let $f_\mathfrak{t}=m_\mathfrak{t}F_\mathfrak{t}$ and $F_{\mathfrak{s}\mathfrak{t}}=F_\mathfrak{s} m_\mathfrak{st} F_\mathfrak{t}$.
\end{definition}
A straightforward induction on $k$ shows the JM elements $L_1,\ldots,L_k$ satisfy the separation condition of Mathas~\cite[Definition~2.8]{MR2414949}. Thus, from~\cite[Sect.~3]{MR2414949}, we obtain the next statement. 
\begin{proposition}\label{s-n-d}
Let $k=1,2,\ldots,$ and $(\lambda,l)\in\hat{A}_{k}$. 
\begin{enumerate}[label=(\arabic{*}), ref=\arabic{*},leftmargin=0pt,itemindent=1.5em]
\item\label{s-n-d:1} If $\mathfrak{t}\in\hat{A}_{k}^{(\lambda,l)}$, then there exist scalars $r_\mathfrak{s}\in \mathbb{F}$, for $\mathfrak{s}\in\hat{A}_{k}^{(\lambda,l)}$, such that 
\begin{align*}
f_\mathfrak{t}=m_\mathfrak{t}
+\sum_{\substack{\mathfrak{s}\succ\mathfrak{t}}} r_\mathfrak{s}m_\mathfrak{s}.
\end{align*}
\item\label{s-n-d:2} ${\lbrace}f_\mathfrak{t}\mid \mathfrak{t}\in\hat{A}_{k}^{(\lambda,l)}{\rbrace}$ is an $\mathbb{F}$-basis for $\Delta^{(\lambda,l)}_{k,\mathbb{F}}$. 
\item\label{s-n-d:3}${\lbrace}F_\mathfrak{st}\mid \text{$\mathfrak{s},\mathfrak{t}\in\hat{A}_{k}^{(\lambda,l)}$ and $(\lambda,l)\in\hat{A}_{k}$}{\rbrace}$ is an $\mathbb{F}$-basis for $A_{k}$. 
\item\label{s-n-d:4} $f_\mathfrak{t}L_i=c_\mathfrak{t}(i)f_\mathfrak{t}$ for all $\mathfrak{t}\in\hat{A}_k^{(\lambda,l)}$ and $i=1,\ldots,k$. 
\item\label{s-n-d:5}  $F_\mathfrak{s}F_\mathfrak{t} =\delta_\mathfrak{st}F_\mathfrak{s}$ and $f_\mathfrak{s}F_{\mathfrak{t}}= \delta_{\mathfrak{st}}f_\mathfrak{s}$ for all $\mathfrak{s},\mathfrak{t}\in\hat{A}_{k}^{(\lambda,l)}$.
\item\label{s-n-d:6}  $\langle f_\mathfrak{s},f_\mathfrak{t}\rangle=\delta_\mathfrak{st} \langle f_\mathfrak{s},f_\mathfrak{s}\rangle$ for all $\mathfrak{s},\mathfrak{t}\in\hat{A}_k^{(\lambda,l)}$.
\item\label{s-n-d:7} $f_\mathfrak{s}F_\mathfrak{tu}=\langle f_\mathfrak{s},f_\mathfrak{t}\rangle f_\mathfrak{u}$ for all $\mathfrak{s,t,u}\in\hat{A}_k^{(\lambda,l)}$.
\end{enumerate}
\end{proposition}
The bases given in~\eqref{s-n-d:3},\eqref{s-n-d:4} above are the \emph{seminormal bases} for for $\Delta^{(\lambda,l)}_{k,\mathbb{F}}$ and $A_k$ respectively. The seminormal bases are for $\Delta^{(\lambda,l)}_{k,\mathbb{F}}$ and $A_k$ are unique up to scaling factors in $\mathbb{F}$.  
\begin{proposition}
Assume that $(\lambda,l)\in\hat{A}_k$ and  $(\rho,r)\in\hat{A}_{k-1}$, where $(\rho,r)\to(\lambda,l)\in\hat{A}$. Let $N^{(\rho,r)}\subseteq \Delta_{k,\mathbb{F}}^{(\lambda,l)}$ denote the subspace with basis $\lbrace f_\mathfrak{s} \in \Delta_{k,\mathbb{F}}^{(\lambda,l)} \mid \Shape(\mathfrak{s}\downarrow_{k-1})=(\rho,r)\rbrace$.
\item[(1)]  The linear map $N^{(\rho,r)}\to \Delta_{k-1,\mathbb{F}}^{(\rho,r)}$ given by
\begin{align}\label{restriction}
f_\mathfrak{s}\mapsto f_\mathfrak{t},\qquad\text{if  $\mathfrak{s}\in\hat{A}_k^{(\lambda,l)}$, $\mathfrak{t}\in\hat{A}_{k-1}^{(\rho,r)}$ and $\Shape(\mathfrak{s}\downarrow_{k-1})=(\rho,r)$,}
\end{align} 
is an isomorphism of $A_{k-1}$-modules. 
\item[(2)] The maps~\eqref{restriction} induce an isomorphism of right $A_{k-1}$--modules
\begin{align*}
\Delta^{(\lambda,l)}_{k,\mathbb{F} }\cong\bigoplus_{ \substack{(\mu,m) \to(\lambda,l)}}\Delta^{(\mu,m)}_{k-1,\mathbb{F}},
\end{align*}
where the sum is over $(\mu,m)\in\hat{A}_{k-1}$ such that $(\mu,m)\to(\lambda,l)$ in $\hat{A}$. 
\end{proposition}
\begin{proof}
Both statements follow from Proposition~\ref{filtration} and the separating property~\cite[Definition~2.8]{MR2414949} of the JM subalgebra in $\hat{A}_k$. 
\end{proof}
Let $(\lambda,{l})\in\hat{A}_{k+1}$ and  $\mathfrak{t}\in\hat{A}_{k+1}^{(\lambda,{l})}$. For $i=1,\ldots,k$, define structure constants $e_i(\mathfrak{s},\mathfrak{t}),\sigma_{i}(\mathfrak{s},\mathfrak{t})\in\mathbb{F}$ by
\begin{align*}
f_\mathfrak{t}e_{i}
=\sum_{\mathfrak{s}} e_{i}(\mathfrak{s},\mathfrak{t})f_\mathfrak{s},\qquad\text{and}\qquad f_\mathfrak{t}\sigma_{i}
&=\sum_{\mathfrak{s}} \sigma_{i}(\mathfrak{s},\mathfrak{t})f_\mathfrak{s}.
\end{align*}
In a seminormal representation of $A_{k}$, the structure constants for the Coxeter generator $s_i$  are obtained by the relation
\begin{align*}
s_i(\mathfrak{s},\mathfrak{t})= \sum_{\mathfrak{u}}
\sigma_{2i}(\mathfrak{s},\mathfrak{u})\sigma_{2i+1}(\mathfrak{u},\mathfrak{t}),\qquad\text{for $\mathfrak{s},\mathfrak{t}\in\hat{A}_{k}^{(\lambda,{l})}$ and $1\le i<\lfloor k/2\rfloor$.}
\end{align*}

The statement of the main results of this paper requires some combinatorial preparation.
\begin{definition}
Assume that $(\lambda,l)\in\hat{A}_k$ and $\mathfrak{s},\mathfrak{t}\in\hat{A}^{(\lambda,l)}$. 
\begin{enumerate}
 \item Write $\mathfrak{s}\stackrel{i}{\sim}\mathfrak{t}$ if $\mathfrak{s}^{(j)}=\mathfrak{t}^{(j)}$ whenever $j\ne i$. 
 \item Write $\mathfrak{s}\stackrel{i}{\approx}\mathfrak{t}$ if $\mathfrak{s}^{(j)}=\mathfrak{t}^{(j)}$ whenever $j\ne i-1$ and $j\ne i$. 
\end{enumerate}
\end{definition}
The next statement follows from the commutativity relations of Proposition~\ref{a-c-r}\eqref{a-c-r-10} and the fact that the seminormal basis for $\Delta_{k,\mathbb{F}}^{(\lambda,l)}$ diagonalises the subalgebra $\mathscr{L}_k$.
\begin{lemma}\label{similar}
If $(\lambda,l)\in\hat{A}$ and $\mathfrak{t}\in\hat{A}_k^{(\lambda,l)}$, then 
\begin{align*}
f_\mathfrak{t}e_{i}
=\sum_{\mathfrak{s}\stackrel{i}{\sim}\mathfrak{t}} e_{i}(\mathfrak{s},\mathfrak{t})f_\mathfrak{s},\qquad\text{and}\qquad f_\mathfrak{t}\sigma_{i}
&=\sum_{\mathfrak{s}\stackrel{i}{\approx}\mathfrak{t}} \sigma_{i}(\mathfrak{s},\mathfrak{t})f_\mathfrak{s},
\end{align*}
for $i=1,\ldots,k-1$. 
\end{lemma}
\begin{lemma}
Assume that $(\lambda,l)\in\hat{A}_{i+1}$ and $\mathfrak{t}= ((\lambda^{(0)},l_0),\ldots,(\lambda^{(i+1)},l_{i+1}))\in\hat{A}_{i+1}^{(\lambda,l)}$. Then $c_\mathfrak{t}(i)+c_\mathfrak{t}(i+1)=z$ if and only if $\lambda^{(i-1)}=\lambda^{(i+1)}$. 
\end{lemma}
\begin{proof}
Assume that $i$ is odd. If $\lambda^{(i)}=\lambda^{(i+1)}$, then $c_\mathfrak{t}(i+1)=z-|\lambda|$. Thus $c_\mathfrak{t}(i)+c_\mathfrak{t}(i+1)=z$ if and only if $\lambda^{(i-1)}=\lambda$. On the other hand, if $\lambda=\lambda^{(i)}\cup\lbrace a\rbrace$, then $c_\mathfrak{t}(i+1)=c(a)$ and $c_\mathfrak{t}(i)+c_\mathfrak{t}(i+1)=z$ if and only if $\lambda^{(i-1)}=\lambda^{(i)}\cup\lbrace a\rbrace=\lambda$.  The case where $i$ is even is similar.
\end{proof}
\begin{proposition}\label{nonzero}
Assume that $(\lambda,l)\in\hat{A}_{i+1}$ and $\mathfrak{t}= ((\lambda^{(0)},l_0),\ldots,(\lambda^{(i+1)},l_k))\in\hat{A}_{i+1}^{(\lambda,l)}$. Then the following statements are equivalent:
\begin{enumerate}
\item\label{firstz} $\lambda^{(i-1)}\ne \lambda^{(i+1)}$.
\item\label{secondz} $f_\mathfrak{t}e_i=0$.
\item\label{thirdz} $e_i(\mathfrak{t},\mathfrak{t})=0$. 
\end{enumerate}
\end{proposition}
\begin{proof} 
\eqref{firstz}$\Rightarrow$\eqref{secondz} If  $\lambda^{(i-1)}\ne \lambda^{(i+1)}$, then  $zf_\mathfrak{t}e_i=f_\mathfrak{t}(L_i+L_{i+1})e_i =(c_\mathfrak{t}(i)+c_\mathfrak{t}(i+1)) f_\mathfrak{t}e_i$. Hence $c_\mathfrak{t}(i)+c_\mathfrak{t}(i+1)\ne z$ forces that $f_\mathfrak{t}e_i=0$. 

\eqref{thirdz}$\Rightarrow$\eqref{firstz} Let $n\in\mathbb{Z}$, where $n\ge 2i+2$. Define $A_{i+1}(n)=A_{i+1}\otimes_\mathbb{F}\mathbb{Q}$ and $\Delta_{i+1,\mathbb{Q}}^{(\lambda,l)}=\Delta_{i+1,\mathbb{F}}^{(\lambda,l)}\otimes_\mathbb{F} \mathbb{Q}$, where $z\in \mathbb{F}$ acts on $\mathbb{Q}$ via $z\cdot 1_\mathbb{Q}=n$. Then $A_{i+1}(n)$ is semisimple~\cite[Theorem~3.27]{MR2143201} and $\lbrace f_\mathfrak{s}\otimes 1_\mathbb{Q} \mid\mathfrak{s}\in \hat{A}_{i+1}^{(\lambda,l)}\rbrace$ is a seminormal basis over $\mathbb{Q}$ for the $A_k(n)$-module $\Delta_{i+1,\mathbb{Q}}^{(\lambda,l)}$. We have two cases to consider. 

{\textsc{Case 1.}} If $i$ is even, let $\rho=(n-|\lambda|-1,\lambda_1,\lambda_2,\ldots)$ and 
\begin{align*}
\mu=\begin{cases}
(n-|\lambda|,\lambda^{(i)}_1,\lambda^{(i)}_2,\ldots), &\text{if $\lambda^{(i)}=\lambda^{(i+1)}$,}\\
(n-|\lambda|-1,\lambda^{(i)}_1,\lambda^{(i)}_2,\ldots), &\text{if $\lambda^{(i+1)}\subsetneq\lambda^{(i)}$.}\\
\end{cases}
\end{align*} 
If $\lambda^{(i-1)}=\lambda^{(i+1)}$,  then Schur--Weyl duality~\cite[Theorem~3.32]{MR2143201} and the argument of~\cite[Lemma~3.5]{MR1398116} show that 
\begin{align*}
e_i(\mathfrak{t,t})\cdot 1_\mathbb{Q}=\frac{\dim(S^\mu)}{n \dim(S^\rho)},
\end{align*}
where $S^\mu$ and $S^\rho$ are irreducible (Specht) modules for the symmetric groups $\mathfrak{S}_{n+1}$ and $\mathfrak{S}_n$ indexed by $\mu$ and $\rho$ respectively. 

{\textsc{Case 2.}} If $i$ is odd, let $\rho=(n-|\lambda|,\lambda_1,\lambda_2, \ldots)$ and 
\begin{align*}
\mu=
\begin{cases}
(n-|\lambda|-1,\lambda^{(i)}_1,\lambda^{(i)}_2,\ldots),&\text{if $\lambda^{(i)}=\lambda^{(i+1)}$,}\\
(n-|\lambda|,\lambda^{(i)}_1,\lambda^{(i)}_2,\ldots), &\text{if $\lambda^{(i)}\subsetneq\lambda^{(i+1)}$.}
\end{cases}
\end{align*} 
If $\lambda^{(i-1)}=\lambda^{(i+1)}$, then
\begin{align*}
e_i(\mathfrak{t,t})\cdot1_\mathbb{Q}=\frac{n\dim(S^\mu)}{\dim(S^\rho)},
\end{align*}
where $S^\mu$ and $S^\rho$ are Specht modules for $\mathfrak{S}_{n-1}$ and $\mathfrak{S}_n$ respectively.

In both cases above, we conclude that $e_i(\mathfrak{t,t})\ne0$ whenever $\lambda^{(i-1)}=\lambda^{(i+1)}$. This gives the required implication. 
\end{proof}
If $\lambda,\mu$ are partitions, let $\lambda\ominus\mu=\lambda\setminus \mu\cup\mu\setminus\lambda$.
\begin{lemma}\label{involutions}
Let $(\lambda,l)\in\hat{A}_{i+1}$ and $\mathfrak{t}=((\lambda^{(0)},l_0),\ldots,(\lambda^{(i+1)},l_{i+1}))\in\hat{A}_{i+1}^{(\lambda,l)}$. Assume that $\lambda^{(i+1)}\ominus\lambda^{(i-2)}=\lbrace \alpha,\beta\rbrace$.  
\begin{enumerate}[label=(\arabic{*}), ref=\arabic{*},leftmargin=0pt,itemindent=1.5em]
\item\label{involutions:1} If $\alpha$ and $\beta$ are neither in the same row nor the same column, then there exists $\mathfrak{s}= \mathfrak{t}\sigma_{i}\in\hat{A}_{i+1}^{(\lambda,l)}$ such that $\lbrace \mathfrak{u}\in\hat{A}_{i+1}^{(\lambda,l)}\mid \mathfrak{u}\stackrel{i}{\approx}\mathfrak{t}\rbrace =\lbrace \mathfrak{s},\mathfrak{t}\}$. Moreover,  
\begin{align*}
c_\mathfrak{s}(i+1)=c_\mathfrak{t}(i-1),\qquad c_\mathfrak{s}(i-1)=c_\mathfrak{t}(i+1),\qquad\text{and}\qquad c_\mathfrak{s}(i)=c_\mathfrak{t}(i).
\end{align*}
\item\label{involutions:2} If $\alpha$ and $\beta$ are in the same row or the same column, then $\lbrace \mathfrak{u}\in\hat{A}_{i+1}^{(\lambda,l)}\mid \mathfrak{u}\stackrel{i}{\approx}\mathfrak{t}\rbrace =\lbrace \mathfrak{t}\}$. 
\end{enumerate}
\end{lemma}
\begin{proof}\eqref{involutions:1}
Assume that $i$ is even. If  $\lambda^{(i+1)}=\lambda^{(i)}$, then $\lambda^{(i-1)}\subsetneq\lambda^{(i)}$ and $\lambda^{(i-1)}\subsetneq\lambda^{(i-2)}$. If $\lambda^{(i-2)}=\lambda^{(i-1)}\cup\lbrace \alpha\rbrace$ and $\lambda^{(i)}=\lambda^{(i-1)}\cup\lbrace \beta\rbrace$, then assumption that $\alpha$ and $\beta$ are neither in the same column nor the same row implies that  $\rho=\lambda^{(i-2)}\cup\lbrace \beta\rbrace$ is a partition and $(\lambda^{(i-2)},l-1)\to (\lambda^{(i-2)},l-1)\to (\rho,l-1)\to (\lambda^{(i+1)},l)$ is a path from level $i-2$ to level $i+1$ of $\hat{A}$. Define $\mathfrak{s}\in \hat{A}_{i+1}^{(\lambda,l)}$ by $\mathfrak{s}\stackrel{i}{\approx}\mathfrak{t}$ and  $\mathfrak{s}^{(i-1)}=(\lambda^{(i-2)},l-1)$ and $\mathfrak{s}^{(i)}=(\rho,l-1)$. 

If $i$ is even and $\lambda^{(i-1)}=\lambda^{(i)}$, then $\lambda^{(i-2)}\supsetneq\lambda^{(i-1)}$ and $\lambda^{(i)}\supsetneq\lambda^{(i+1)}$. Let $\lambda^{(i-2)}=\lambda^{(i-1)}\cup\lbrace \alpha\rbrace$ and $\lambda^{(i)}=\lambda^{(i+1)}\cup \lbrace \beta\rbrace$. Then $\rho=\lambda^{(i+1)}\cup\lbrace \alpha\rbrace$ is a partition and $(\lambda^{(l-2)},l-3)\to (\rho,l-2)\to (\rho,l-1)\to (\lambda,l)$ is a path from level $i-2$ to level $i+1$ of $\hat{A}$. Define $\mathfrak{s}\in \hat{A}_{i+1}^{(\lambda,l)}$ by $\mathfrak{s}\stackrel{i}{\approx}\mathfrak{t}$ and  $\mathfrak{s}^{(i-1)}=(\rho,l-1)$ and $\mathfrak{s}^{(i)}=(\rho,l-1)$.

Assume that $i$ is odd.  If $\lambda^{(i+1)}=\lambda^{(i)}$, then $\lambda^{(i)}\subsetneq\lambda^{(i-1)}$ and $\lambda^{(i-2)}\subsetneq\lambda^{(i-1)}$. If $\lambda^{(i-1)}=\lambda^{(i-2)}\cup\lbrace \alpha\rbrace$ and $\lambda^{(i)}=\lambda^{(i-1)}\cup\lbrace \beta\rbrace$, then $\rho=\lambda^{(i-2)}\setminus \lbrace\beta \rbrace $ is a partition and $(\lambda^{(i-2)},l-1)\to (\lambda^{(i-2)},l-1)\to (\rho,l)\to(\lambda^{(i+1)},l)$ is a path from level $i-2$ to level $i+1$ of $\hat{A}$. Define $\mathfrak{s}\in\hat{A}_{i+1}^{(\lambda,l)}$ by $\mathfrak{s}\stackrel{i}{\approx}\mathfrak{t}$ and $\mathfrak{s}^{(i-1)}=(\lambda^{(i-2)},l-1)$ and $\mathfrak{s}^{(i)}=(\rho,l)$.

If $i$ is odd and $\lambda^{(i-1)}=\lambda^{(i)}$, then $\lambda^{(i-1)}\subsetneq\lambda^{(i-2)}$ and $\lambda^{(i)}\subsetneq\lambda^{(i+1)}$. Let $\lambda^{(i-1)}=\lambda^{(i-2)}\cup\lbrace \alpha\rbrace $ and $\lambda^{(i+1)}=\lambda^{(i)}\cup\lbrace \beta\rbrace$. Then $\rho=\lambda^{(i-2)}\cup\lbrace \beta\rbrace$ is a partition and $(\lambda^{(i-2)},l)\to (\rho,l)\to (\rho,l)\to (\lambda^{(i+1)},l)$ is a path from level $i-2$ to level $i+1$ of $\hat{A}$. Define $\mathfrak{s}\in\hat{A}_{i+1}^{(\lambda,l)}$ by $\mathfrak{s}\stackrel{i}{\approx}\mathfrak{t}$ and $\mathfrak{s}^{(i-1)}=(\rho,l)$ and $\mathfrak{s}^{(i)}=(\rho,l)$. 

In each case above, $\mathfrak{s}$ is the unique element in $\hat{A}_{i+1}^{(\lambda,l)}$ satisfying the conditions $\mathfrak{s}\stackrel{i}{\approx}\mathfrak{t}$ and $\mathfrak{s}\ne \mathfrak{t}$. We therefore denote $\mathfrak{s}=\mathfrak{t}\sigma_i$ and $\mathfrak{t}=\mathfrak{s}\sigma_i$ in each case. 

\eqref{involutions:2} Assume that $i$ is odd and that $\alpha$ and $\beta$ are in the same row. Then $\lambda^{(i-2)}\subsetneq\lambda^{(i-1)}=\lambda^{(i)}\subsetneq \lambda^{(i+1)}$. Without loss of generality, we may write $\alpha=(j,\lambda_j-1)$ and $\beta=(j,\lambda_j)$, thereby making it clear that $(\lambda^{(i-2)},l)\to(\lambda^{(i-1)},l)\to(\lambda^{(i)},l)\to(\lambda^{(i+1)},l)$ is the unique path from the vertex $(\lambda^{(i-2)},l)$ in level  $i-2$ to the vertex $(\lambda,l)$ in level $i+1$ of $\hat{A}$. The remaining cases are similar. 
\end{proof}
\begin{lemma}\label{onebox}
Let $(\lambda,l)\in\hat{A}_{i+1}$ and $\mathfrak{t}=((\lambda^{(0)},l_0),\ldots,(\lambda^{(i+1)},l_{i+1}))\in\hat{A}_{i+1}^{(\lambda,l)}$, where $\lambda^{(i+1)}\ominus\lambda^{(i-2)}$ consists of a single node. If $\lambda^{(i+1)}\ne\lambda^{(i-1)}$ and $\lambda^{(i)}\ne\lambda^{(i-2)}$, then $\lbrace \mathfrak{s}\in\hat{A}_{i+1}^{(\lambda,l)}\mid \mathfrak{s}\stackrel{i}{\approx}\mathfrak{t}\rbrace=\lbrace \mathfrak{t} \rbrace$. 
\end{lemma}
\begin{proof}
If $i$ is even, then $\lambda^{(i-2)}=\lambda^{(i-1)}\subsetneq\lambda^{(i)}=\lambda^{(i+1)}$ and $(\lambda^{(i-2)},l)\to(\lambda^{(i-1)},l)\to(\lambda^{(i)},l)\to(\lambda^{(i+1)},l)$ is the unique path from the vertex $(\lambda^{(i-2)},l)$ in level  $i-2$ to the vertex $(\lambda,l)$ in level $i+1$ of $\hat{A}$.  If $i$ is odd,  then $\lambda^{(i-2)}=\lambda^{(i-1)}\supsetneq\lambda^{(i)}=\lambda^{(i+1)}$ and $(\lambda^{(i-2)},l-3)\to(\lambda^{(i-1)},l-2)\to(\lambda^{(i)},l-1)\to(\lambda^{(i+1)},l)$ is the unique path from the vertex $(\lambda^{(i-2)},l-3)$ in level $i-2$ to the vertex $(\lambda,l)$ in level $i+1$ of $\hat{A}$. 
\end{proof}
\begin{definition}
Let $\lambda$ be a partition.
\begin{enumerate}[label=(\arabic{*}), ref=\arabic{*},leftmargin=0pt,itemindent=1.5em]
\item If $a=(i,\lambda_i)\in R(\lambda)$, let $R(\lambda)^{<a}=\lbrace (i,\lambda_i)\in R(\lambda)\mid i>j\rbrace$.
\item If $a=(i,\lambda_i+1)\in A(\lambda)$, let $A(\lambda)^{<a}=\lbrace (i,\lambda_i+1)\in A(\lambda)\mid i>j\rbrace$.
\end{enumerate}
\end{definition}
The next definition gives a set branching factors for the inner product~\eqref{f-d-1} on cell modules. 
\begin{definition}
Let $\lambda,\mu$ be partitions.
\begin{enumerate}[label=(\arabic{*}), ref=\arabic{*},leftmargin=0pt,itemindent=1.5em]
\item If $\lambda\in\widehat{H}_{2i}$, $\mu\in\widehat{H}_{2i+1}$, where $\lambda\to\mu$ in $\widehat{H}$, then $\lambda=\mu$ and, in this case, define $\gamma_{\lambda\to\mu}=1$. 
\item If $\lambda\in\widehat{H}_{2i+1}$, $\mu\in\widehat{H}_{2i+2}$, where $\lambda\to\mu$ in $\widehat{H}$, then $\lambda\subsetneq\mu$ and, in this case, define 
\begin{align*}
\gamma_{\lambda\to\mu}= 
{\displaystyle\frac{\prod_{\beta\in A(\lambda)^{<\alpha}}(c(\alpha)-c(\beta))}{\prod_{\beta\in R(\lambda)^{<\alpha}}(c(\alpha)-c(\beta))}}, \qquad\text{where $\mu=\lambda\cup \lbrace \alpha\rbrace$.}
\end{align*}
\item If $(\lambda,l)\in\hat{A}_i$, $(\mu,m)\in\hat{A}_{i+1}$, where $(\lambda,l)\to(\mu,m)$ in $\hat{A}$, let 

\begin{align}\label{gammadef}
\gamma^{(i+1)}_{(\lambda,l)\to(\mu,m)}=
\begin{cases}
\gamma_{\lambda\to\mu}, &\text{if $l=m$,}\vspace{0.5em}\\
e_{i}(\mathfrak{t,t})\gamma_{\mu\to\lambda}, &\text{if $l=m-1$,}
\end{cases}
\end{align} 
where, in the second case in~\eqref{gammadef}, the path $\mathfrak{t}\in\hat{A}_{i+1}^{(\mu,m)}$ satisfies $\mathfrak{t}^{(i-1)}=(\mu,l)$ and $\mathfrak{t}^{(i)}=(\lambda,l)$. 
\end{enumerate}
\end{definition}
Using Proposition~\ref{nonzero} and Lemma~\ref{r-e-s-i}, we note that the structure constant $e_{i}(\mathfrak{t},\mathfrak{t})$  in~\eqref{gammadef} is non-zero and completely determined by $\lambda$ and $\mu$. We defer the proof of the following statement on the branching factors for the inner product on cell modules to Sect.~\ref{det-proof}.
\begin{proposition}\label{branching}
Let $(\lambda,l)\in\hat{A}_{k}$ and $(\mu,m)\in\hat{A}_{k+1}$, where $(\lambda,l)\to(\mu,m)\in\hat{A}$. If  $\mathfrak{s}\in\hat{A}_{k+1}^{(\mu,m)}$ and  $\mathfrak{s}'\in\hat{A}_{k}^{(\lambda,l)}$, where $\mathfrak{s}'=\mathfrak{s}\downarrow_{k}$, then  
\begin{align}\label{quotient}
\frac{\langle f_\mathfrak{s},f_\mathfrak{s}\rangle}{ \langle f_\mathfrak{s'},f_\mathfrak{s'}\rangle} =\gamma_{(\lambda,l)\to(\mu,m)}^{(k+1)}.
\end{align}
\end{proposition}
\section{Main Results}\label{m-r}
Theorems~\ref{contractioneven} to \ref{sigmaodd} below give explicit combinatorial formulae for representations of generators of the partition algebras relative to the seminormal basis for $\Delta_{k+1,\mathbb{F}}^{(\lambda,l)}$. The statement of results is split into four only to simplify the presentation and avoid clashes between indices. 
\begin{theorem}\label{contractioneven}
Let $k$ be even and $(\lambda,l)\in\hat{A}_{k+1}$. Write $\mathfrak{t}=((\lambda^{(0)},l_0),\ldots,(\lambda^{(k+1)},l_{k+1}))\in\hat{A}_{k+1}^{(\lambda,l)}$ and let $\mathfrak{s}\in\hat{A}_{k+1}^{(\lambda,l)}$, where $\mathfrak{s}\stackrel{k}{\sim}\mathfrak{t}$. 
~ Then the following statements hold: 
\begin{enumerate}[label=(\arabic{*}), ref=\arabic{*},leftmargin=0pt,itemindent=1.5em]
\item\label{contractioneven.1} If $\mathfrak{s}^{(k-1)}\ne(\lambda,l-1)$, then $e_k(\mathfrak{s,t}) =0$. 
\item\label{contractioneven.2} If $\mathfrak{s}^{(k-1)}=(\lambda,l-1)$, then
\begin{align}
e_{k}(\mathfrak{s,t})&=
\begin{cases}
\gamma_{\lambda\to\mu}^{-1}, &\text{if $\mathfrak{s}^{(k)}=(\mu,l-1)$ and $\mathfrak{t}^{(k)}=(\lambda,l)$,}\\
e_k(\mathfrak{s,s})e_k(\mathfrak{t,t})\gamma_{\lambda\to\mu}, &\text{if $\mathfrak{s}^{(k)}=(\lambda,l)$ and $\mathfrak{t}^{(k)}=(\mu,l-1)$,}\\ 
{\displaystyle\frac{\gamma_{\lambda\to\mu}}{\gamma_{\lambda\to\rho}}}e_k(\mathfrak{t,t}),&\text{if $\mathfrak{s}^{(k)}=(\rho,l-1)$ and $\mathfrak{t}^{(k)}=(\mu,l-1)$,}
\end{cases}\label{contractioneven.2.1}
\intertext{where} 
e_{k}(\mathfrak{t},\mathfrak{t})&=
\begin{cases}
\displaystyle{\frac{\prod_{\beta\in R(\lambda)}(z-c(\beta)-|\lambda|)}
{\prod_{\beta\in A(\lambda)}(z-c(\beta)-|\lambda|)}},
&\text{if $\lambda^{(k-1)}=\lambda^{(k)} =\lambda^{(k+1)}=\lambda$,}\medskip \\
\displaystyle{
\frac{(z-c(\alpha)-|\lambda|-1)}{(z-c(\alpha)-|\lambda|)}}
\frac{\prod_{\beta\in R(\lambda)}(c(\alpha)-c(\beta))}
{\prod_{\substack{\beta\in A(\lambda),\\ \beta\ne\alpha}}(c(\alpha)-c(\beta))},
&
\begin{matrix}
\text{if $\lambda^{(k-1)} =\lambda^{(k+1)}=\lambda$ and} \\
\text{$\lambda^{(k)} =\lambda\cup{\lbrace}\alpha{\rbrace}$.}
\end{matrix}
\end{cases}\notag
\end{align}
\end{enumerate}
\end{theorem}
\begin{theorem}\label{contractionodd}
Let $k$ be odd and $(\lambda,l)\in\hat{A}_{k+1}$. Write $\mathfrak{t}=((\lambda^{(0)},l_0),\ldots,(\lambda^{(k+1)},l_{k+1}))\in\hat{A}_{k+1}^{(\lambda,l)}$ and let $\mathfrak{s}\in\hat{A}_{k+1}^{(\lambda,l)}$, where $\mathfrak{s}\stackrel{k}{\sim}\mathfrak{t}$. 
~ Then the following statements hold: 
\begin{enumerate}[label=(\arabic{*}), ref=\arabic{*},leftmargin=0pt,itemindent=1.5em]
\item\label{contractionodd.1} If $\mathfrak{s}^{(k-1)}\ne(\lambda,l-1)$, then $e_k(\mathfrak{s,t}) =0$. 
\item\label{contractionodd.2}  If $\mathfrak{s}^{(k-1)}=(\lambda,l-1)$, then
\begin{align}
e_{k}(\mathfrak{s,t})&=
\begin{cases}
\gamma_{\mu\to\lambda} , &\text{if  $\mathfrak{s}^{(k)}=(\lambda,l-1)$ and  $\mathfrak{t}^{(k)}=(\mu,l),$}\\
{\displaystyle\frac{e_k(\mathfrak{s,s}) e_k(\mathfrak{t,t})}{\gamma_{\mu\to\lambda}}}, &\text{if $\mathfrak{s}^{(k)}=(\mu,l)$ and $\mathfrak{t}^{(k)}=(\lambda,l-1)$,}\\
{\displaystyle{\frac{\gamma_{\mu\to\lambda}}{\gamma_{\rho\to\lambda}}}}e_k(\mathfrak{s,s}), &\text{if $\mathfrak{s}^{(k)}=(\rho,l)$ and $\mathfrak{t}^{(k)}=(\mu,l)$,}
\end{cases}\label{contractionodd.2.1} 
\intertext{where}
e_{k}(\mathfrak{t},\mathfrak{t})&=
\begin{cases}
\displaystyle{
\frac{\prod_{\beta\in A(\lambda)}\big(z-c(\beta)-|\lambda|\big)}
{\prod_{\beta\in R(\lambda)}\big(z-c(\beta)-|\lambda|\big)}
},
&\text{if $\lambda^{(k-1)} =\lambda^{(k)} =\lambda^{(k+1)}=\lambda$,} \medskip\\
-\displaystyle{
\frac{(z-c(\alpha)-|\lambda|+1)}{(z-c(\alpha)-|\lambda|)}
\frac{\prod_{\beta\in A(\lambda)}(c(\beta)-c(\alpha))}{\prod_{\substack{\beta\in R(\lambda),\\ \beta\ne\alpha}}(c(\beta)-c(\alpha))}
},
&
\begin{matrix}
\text{if $\lambda^{(k-1)}= \lambda^{(k+1)}=\lambda$ and}\\
\text{$\lambda^{(k)}=\lambda\setminus{\lbrace} \alpha{\rbrace}.$}
\end{matrix}
\end{cases}\notag
\end{align}
\end{enumerate}
\end{theorem}
\begin{theorem}\label{sigmaeven} Let $k$ be even and $(\lambda,{l})\in\hat{A}_{k+1}$. If  $\mathfrak{t}=((\lambda^{(0)},l_0),\ldots,(\lambda^{(k+1)},l_{k+1}))\in\hat{A}_{k+1}^{(\lambda,{l})}$ and  $\mathfrak{s}=((\rho^{(0)},r_0),\ldots,(\rho^{(k+1)},r_{k+1}))\in\hat{A}_{k+1}^{(\lambda,l)}$, where $\mathfrak{s}\stackrel{k}{\approx}\mathfrak{t}$, then the following statements hold:
\begin{enumerate}[label=(\arabic{*}), ref=\arabic{*},leftmargin=0pt,itemindent=1.5em]
\item\label{sigmaeven.1} If $\lambda^{(k-1)}=\lambda^{(k+1)}$  and  $\lambda^{(k-2)}=\lambda^{(k)}$, then   
\begin{align*}
\sigma_{k}(\mathfrak{t,t})=
\frac{c_\mathfrak{t}(k)}{e_{k-1}(\mathfrak{t,t})}
\end{align*}
and, when $\mathfrak{s}\ne\mathfrak{t}$,
\begin{align}\label{sigmaeven.1.1}
\sigma_{k}(\mathfrak{s},\mathfrak{t})=
\frac{z-c_\mathfrak{s}(k)-c_\mathfrak{t}(k-1)-e_{k-1}(\mathfrak{t,t})}{c_\mathfrak{s}(k+1)-c_\mathfrak{t}(k-1)}e_k(\mathfrak{s,t})
\end{align}
\item\label{sigmaeven.2} If $\lambda^{(k-1)}\ne \lambda^{(k+1)}$ and $\lambda^{(k)}=\lambda^{(k-2)}$, let $\mathfrak{v}\in\hat{A}_{k+1}^{(\lambda,l)}$, where $\mathfrak{v}\stackrel{k-1}{\sim}\mathfrak{t}$ and $\mathfrak{v}^{(k-1)}=(\lambda,l-1)$. Then, 
\begin{align}\label{sigmaeven.2.2}
\sigma_k(\mathfrak{v,t})=c_\mathfrak{t}(k)e_k(\mathfrak{v,t})
\end{align}
and, 
\begin{align}\label{sigmaeven.2.1}
\sigma_{k}(\mathfrak{s,t})=\frac{\delta_\mathfrak{st}-e_{k-1}(\mathfrak{v,t}) e_k(\mathfrak{s,v})}{c_\mathfrak{s}(k+1)-c_\mathfrak{t}(k-1)}.
\end{align}
\item\label{sigmaeven.3} If $\lambda^{(k+1)}=\lambda^{(k-1)}$ and $\lambda^{(k)}\ne\lambda^{(k-2)}$, then   
\begin{align}\label{sigmaeven.3.1}
\sigma_{k}(\mathfrak{s},\mathfrak{t})&=
\frac{\delta_\mathfrak{st}+(z-c_\mathfrak{t}(k-1)-c_\mathfrak{s}(k))e_k(\mathfrak{s,t})}{c_\mathfrak{s}(k+1)-c_\mathfrak{t}(k-1)}, \quad\text{if $\rho^{(k)}\ne \lambda^{(k-2)}$,}
\end{align}
and 
\begin{align}\label{sigmaeven.3.2}
\sigma_k(\mathfrak{s,t})=\frac{\sigma_k(\mathfrak{t,s})\langle f_\mathfrak{t},f_\mathfrak{t}\rangle}{\langle f_\mathfrak{s},f_\mathfrak{s}\rangle},\quad\text{if $\rho^{(k)}=\lambda^{(k-2)}$.}
\end{align}
\item\label{sigmaeven.4} If If $\lambda^{(k+1)}\ne\lambda^{(k-1)}$ and $\lambda^{(k)}\ne\lambda^{(k-2)}$, and $\mathfrak{t}\sigma_{k}$ does not exist, then 
\begin{align}\label{sigmaeven.4.1}
\sigma_{k}(\mathfrak{s},\mathfrak{t})=
\frac{\delta_\mathfrak{st}}{c_\mathfrak{t}(k+1)-c_\mathfrak{t}(k-1)}.
\end{align}
\item\label{sigmaeven.5} If $\lambda^{(k+1)}\ne\lambda^{(k-1)}$ and $\lambda^{(k)}\ne\lambda^{(k)}$, and $\mathfrak{t}\sigma_{i}$ exists, then 
\begin{align}\label{sigmaeven.5.1}
\sigma_{k}(\mathfrak{s},\mathfrak{t})=
\begin{cases}
{\displaystyle\frac{1}{c_\mathfrak{t}(k+1)-c_\mathfrak{t}(k-1)},}&
\text{if $\mathfrak{s}=\mathfrak{t}$,}\medskip\\1-{\displaystyle\frac{1}{(c_\mathfrak{t}(k+1)-c_\mathfrak{t}(k-1))^2}},
&\text{if $\mathfrak{s}=\mathfrak{t}\sigma_{k}$ and $\mathfrak{s}\succ\mathfrak{t}$,}\medskip\\
1, &\text{if $\mathfrak{s}=\mathfrak{t}\sigma_{k}$ and $\mathfrak{t}\succ\mathfrak{s}$.}
\end{cases}
\end{align}
\end{enumerate}
\end{theorem}
\begin{theorem}\label{sigmaodd}
Let $k$ be odd and $(\lambda,{l})\in\hat{A}_{k+1}$. If  $\mathfrak{t}=((\lambda^{(0)},l_0),\ldots,(\lambda^{(k+1)},l_{k+1}))\in\hat{A}_{k+1}^{(\lambda,{l})}$ and  $\mathfrak{s}=((\rho^{(0)},r_0),\ldots,(\rho^{(k+1)},r_{k+1}))\in\hat{A}_{k+1}^{(\lambda,l)}$, where $\mathfrak{s}\stackrel{k}{\approx}\mathfrak{t}$, then the following statements hold:
\begin{enumerate}[label=(\arabic{*}), ref=\arabic{*},leftmargin=0pt,itemindent=1.5em]
\item\label{sigmaodd.1} If $\lambda^{(k+1)}=\lambda^{(k-1)}$ and $\lambda^{(k)}=\lambda^{(k-2)}$, then 
\begin{align}\label{sigmaodd.1.2}
\sigma_{k}(\mathfrak{t,t})=\frac{c_\mathfrak{t}(k-1)}{e_{k-1}(\mathfrak{t,t})}
\end{align}
and, when $\mathfrak{s}\ne\mathfrak{t}$,
\begin{align}\label{sigmaodd.1.1}
\sigma_{k}(\mathfrak{s},\mathfrak{t})= -\frac{e_{k-1}(\mathfrak{t,t})e_{k-1}(\mathfrak{s,t})}{c_\mathfrak{s}(k+1)-c_\mathfrak{t}(k-1)}.
\end{align}
\item\label{sigmaodd.2} If $\lambda^{(k+1)}\ne\lambda^{(k-1)}$ and $\lambda^{(k)}=\lambda^{(k-2)}$,  let $\mathfrak{v}\in\hat{A}_{k+1}^{(\lambda,l)}$, where $\mathfrak{v}\stackrel{k}{\sim}\mathfrak{t}$ and $\mathfrak{v}^{(k-1)}=(\lambda,l-1)$. Then, 
\begin{align}\label{sigmaodd.2.1} 
\sigma_k(\mathfrak{v,t})=c_\mathfrak{t}(k-1)e_k(\mathfrak{v,t})
\end{align}
and
\begin{align}\label{sigmaodd.2.2} 
\sigma_k(\mathfrak{s,t}) =\frac{\delta_\mathfrak{st} -e_{k-1}(\mathfrak{v,t})e_k(\mathfrak{s,v}) +(c_\mathfrak{v}(k-1) -c_\mathfrak{t}(k-1))e_{k-1}(\mathfrak{s,t})}{c_\mathfrak{s}(k+1)-c_\mathfrak{t}(k-1)}.
\end{align}
\item\label{sigmaodd.3} If $\lambda^{(k+1)}=\lambda^{(k-1)}$ and $\lambda^{(k)}\ne\lambda^{(k-2)}$, then 
\begin{align}\label{sigmaodd.3.1} 
\sigma_{k}(\mathfrak{s},\mathfrak{t})=
\frac{\delta_\mathfrak{st}}{c_\mathfrak{t}(k+1)-c_\mathfrak{t}(k-1)}\quad\text{if $\rho^{(k)}\ne\lambda^{(k-2)}$,}
\end{align}
and 
\begin{align}\label{sigmaodd.3.2}
\sigma_k(\mathfrak{s,t})=\frac{\sigma_k(\mathfrak{t,s})\langle f_\mathfrak{t},f_\mathfrak{t}\rangle}{\langle f_\mathfrak{s},f_\mathfrak{s}\rangle}\quad\text{if $\rho^{(k)}=\lambda^{(k-2)}$.}
\end{align}
\item\label{sigmaodd.4}  If $\lambda^{(k+1)}\neq\lambda^{(k-1)}$ and $\lambda^{(k)}\neq\lambda^{(k-2)}$, and $\mathfrak{t}\sigma_{k}$ does not exist, then 
\begin{align}\label{sigmaodd.4.1} 
\sigma_{k}(\mathfrak{s},\mathfrak{t})=
\frac{\delta_\mathfrak{st}}{c_\mathfrak{t}(k+1)-c_\mathfrak{t}(k-1)}.
\end{align}
\item\label{sigmaodd.5} If $\lambda^{(k+1)}\neq\lambda^{(k-1)}$ and $\lambda^{(k)}\neq\lambda^{(k-2)}$, and $\mathfrak{t}\sigma_{k}$ exists, then 
\begin{align}\label{sigmaodd.5.1}
\sigma_{k}(\mathfrak{s},\mathfrak{t})=
\begin{cases}
{\displaystyle\frac{1}{c_\mathfrak{t}(k+1)-c_\mathfrak{t}(k-1)},}&\text{if $\mathfrak{s}=\mathfrak{t}$,}\medskip\\
1-{\displaystyle\frac{1}{(c_\mathfrak{t}(k+1)-c_\mathfrak{t}(k-1))^2}},
&\text{if $\mathfrak{s}=\mathfrak{t}\sigma_{k}$ and $\mathfrak{s}\succ\mathfrak{t}$,}\medskip\\
1, &\text{if $\mathfrak{s}=\mathfrak{t}\sigma_{k}$ and $\mathfrak{t}\succ\mathfrak{s}$.}
\end{cases}
\end{align}
\end{enumerate}
\end{theorem}
\subsection{Proof of Main Results}
Before embarking on the proof of the statements above, we establish some basic properties of the contraction element $e_i\in A_{i+1}$.
\begin{lemma}\label{contract}
Assume that $(\lambda,l)\in\hat{A}_{i+1}$ and $\mathfrak{s}=((\lambda^{(0)},l_0),\ldots,(\lambda^{(i+1)},l_{i+1}))\in\hat{A}_{i+1}^{(\lambda,l)}$, where $\lambda^{(i-1)}=\lambda^{(i+1)}$. If $\mathfrak{u}=\mathfrak{s}\downarrow_{i-1}$, then $e_iF_\mathfrak{s}e_i=e_i(\mathfrak{s,s})e_i F_\mathfrak{u}$. 
\end{lemma}
\begin{proof}
Assuming that $\lambda^{(i-1)}=\lambda^{(i+1)}$, Proposition~\ref{nonzero} implies that
\begin{align}\label{r-e-1}
F_\mathfrak{s}e_{i}F_\mathfrak{s}=e_{i}(\mathfrak{s},\mathfrak{s})F_\mathfrak{s}\ne0.
\end{align}
If $\Phi(L_{i},L_{i+1})$ is a polynomial in $L_{i}$ and $L_{i+1}$ over $\mathbb{F}$, the Jones basic construction shows that 
\begin{align*}
e_{i}\Phi(L_{i},L_{i+1})e_{i}=\zeta e_{i},
\end{align*}
where $\zeta$ is a central element in $A_{i-1}$. In particular, taking $\Phi_\mathfrak{s}(L_{i},L_{i+1})$ to be a polynomial in $L_{i},L_{i+1}$ over $\mathbb{F}$ such that $F_\mathfrak{s}=\Phi_\mathfrak{s}F_\mathfrak{u}$, we obtain 
\begin{align}\label{r-e-2}
e_{i}F_\mathfrak{s}e_{i}=e_{i}\Phi_\mathfrak{s}(L_{i},L_{i+1})e_{k}F_\mathfrak{u}
=F_\mathfrak{u}\zeta_\mathfrak{s}e_{i},
\end{align}
where $\zeta_\mathfrak{s}$ is central in $A_{i-1}$ and $F_\mathfrak{u}\in A_{i-1}$. Using~\eqref{r-e-1} and~\eqref{r-e-2}, we obtain $f_\mathfrak{s}e_{i}F_\mathfrak{s}e_{i}=e_{i}(\mathfrak{s},\mathfrak{s})f_\mathfrak{s}$ and $f_\mathfrak{s}e_{i}F_\mathfrak{s}e_{i}
=f_\mathfrak{s}F_\mathfrak{u} \zeta_\mathfrak{s}e_{i}= f_\mathfrak{s} \zeta_\mathfrak{s}e_{i}$. It follows that $f_\mathfrak{s}\zeta_\mathfrak{s}=e_i(\mathfrak{s},\mathfrak{s})f_\mathfrak{s}$ and $\zeta_\mathfrak{s}$ acts on  $\Delta_{i-1,\mathbb{F}}^{(\lambda,{l}-1)}$ as scalar multiplication by $e_{i}(\mathfrak{s},\mathfrak{s})$. Since $F_\mathfrak{u}=\langle f_\mathfrak{u},f_\mathfrak{u}\rangle^{-1} F_\mathfrak{uu}$, we obtain $F_\mathfrak{u}\zeta_\mathfrak{s}=e_i(\mathfrak{s,s})F_\mathfrak{u}$ which, substituted into~\eqref{r-e-2}, gives the required result. 
\end{proof}
\begin{lemma}\label{s-q}
Let $(\lambda,{l})\in\hat{A}_{i+1}$ and $\mathfrak{s},\mathfrak{t}\in\hat{A}_{i+1}^{(\lambda,l)}$, where $\mathfrak{s}\stackrel{i}{\sim}\mathfrak{t}$.
\begin{enumerate}[label=(\arabic{*}), ref=\arabic{*},leftmargin=0pt,itemindent=1.5em]
\item\label{s-q-0} $e_{i}(\mathfrak{s},\mathfrak{t}) f_\mathfrak{s}e_{i}=e_i(\mathfrak{s},\mathfrak{s}) f_\mathfrak{t}e_i$
\item\label{s-q-1}  $e_{i}(\mathfrak{s},\mathfrak{t})e_{i}(\mathfrak{t},\mathfrak{u})
=e_{i}(\mathfrak{t},\mathfrak{t})e_{i}(\mathfrak{s},\mathfrak{u})$ for all $\mathfrak{u}\in\hat{A}_{k}^{(\lambda,{l})}$.
\end{enumerate}
\end{lemma}
\begin{proof}
Let $\mathfrak{s}=((\lambda^{(0)},l_0),\ldots,(\lambda^{(i+1)},l_{i+1}))\in\hat{A}_{i+1}^{(\lambda,{l})}$. If $\lambda^{(i-1)}\ne\lambda^{(i+1)}$, then $f_\mathfrak{s}e_i=f_\mathfrak{t}e_i=0$ and 
 the statements~\eqref{s-q-0} and~\eqref{s-q-1} hold. We therefore assume that $\lambda^{(i-1)}=\lambda^{(i+1)}$.  By Lemma~\ref{contract}, we have $e_i(\mathfrak{s},\mathfrak{s})f_\mathfrak{t}e_i=f_\mathfrak{t}e_i F_\mathfrak{s}e_i=e_i(\mathfrak{s},\mathfrak{t})f_\mathfrak{s}e_i$, which proves the first item. For the proof of the second item, let $E_\mathfrak{uv}:\Delta_{i+1,\mathbb{F}}^{(\lambda,l)}\to \Delta_{i+1,\mathbb{F}}^{(\lambda,l)}$, for $\mathfrak{u,v}\in\hat{A}_{i+1}^{(\lambda,l)}$ denote the linear map given by
\begin{align*}
f_\mathfrak{r}E_\mathfrak{uv}= \delta_\mathfrak{ru}f_\mathfrak{v},\qquad\text{for all $r\in\hat{A}_{i+1}^{(\lambda,l)}$.}
\end{align*}
Then, as endomorphisms of $\Delta_{i+1,\mathbb{F}}^{(\lambda,l)}$, 
\begin{align*}
e_iF_\mathfrak{t}e_i= e_i(\mathfrak{t},\mathfrak{t})\sum_{\mathfrak{s}\stackrel{i}{\sim}\mathfrak{u}\stackrel{i}{\sim}\mathfrak{t}} e_i(\mathfrak{s},\mathfrak{u})E_\mathfrak{su},
\end{align*}
and 
\begin{align*}
e_iF_\mathfrak{t}e_i= e_iF_\mathfrak{t}^2e_i= \bigg( \sum_{\mathfrak{s}\stackrel{i}{\sim}\mathfrak{t}} e_i(\mathfrak{s,t})E_\mathfrak{st}\bigg) \bigg( \sum_{\mathfrak{u}\stackrel{i}{\sim}\mathfrak{t}} e_i(\mathfrak{t,u})E_\mathfrak{tu}\bigg) = \sum_{\mathfrak{s}\stackrel{i}{\sim}\mathfrak{u}\stackrel{i}{\sim}\mathfrak{t}} e_i(\mathfrak{s},\mathfrak{t})e_i(\mathfrak{t,u})E_\mathfrak{su}.
\end{align*}
Comparing coefficients in the two expressions above gives the required formula.
\end{proof}
\begin{corollary}
Let $(\lambda,l)\in\hat{A}_{i+1}$ and $\mathfrak{t}=((\lambda^{(0)},l_0),\ldots,(\lambda^{(i+1)},l_{i+1}) \in\hat{A}_{i+1}^{(\lambda,l))}$. If  $\lambda^{(i-1)}=\lambda^{(i+1)}$,  then the following statements hold:
\begin{enumerate}[label=(\arabic{*}), ref=\arabic{*},leftmargin=0pt,itemindent=1.5em]
\item  If $i$ is even, then the eigenspace corresponding to the eigenvalue $1$ for the action of $e_i$ on the subspace $\langle f_\mathfrak{s}\in\Delta_{i+1,\mathbb{F}}^{(\lambda,l)}\mid \mathfrak{s}\stackrel{i}{\sim}\mathfrak{t}\rangle$  is one--dimensional. 
\item If $i$ is odd, then the eigenspace corresponding to the  eigenvalue $z$ for the action of $e_i$ on the subspace $\langle f_\mathfrak{s}\in\Delta_{i+1,\mathbb{F}}^{(\lambda,l)}\mid \mathfrak{s}\stackrel{i}{\sim}\mathfrak{t}\rangle$  is one--dimensional. 
\end{enumerate}
\end{corollary}

Lemma~\ref{s-q}\eqref{s-q-0} shows that, granted the diagonal structure constants for the contraction $e_k$, the off--diagonal structure constants of $e_k$ are completely determined by Proposition~\ref{offdiag:1} and Proposition~\ref{offdiag:2} below. The proofs of Proposition~\ref{offdiag:1} and Proposition~\ref{offdiag:2} rely on the branching factors for inner products~\eqref{quotient} which are verified in Sect.~\ref{det-proof}.
\begin{proposition}\label{offdiag:1}
Let $k=2i$ and $(\lambda,l)\in\hat{A}_{k+1}$. Assume that $\mathfrak{s}\in\hat{A}_{k+1}^{(\lambda,l)}$, where $\mathfrak{s}\downarrow_{k-1}=\mathfrak{t}^{(\lambda,l-1)}$ and $\mathfrak{s}^{(k)}=(\lambda,l)$. Let 
\begin{align*}
(\nu,l-1)=\min\lbrace(\rho,r)\in\hat{A}_k\mid(\rho,r)\to(\lambda,l)\text{ in }\hat{A}\rbrace
\end{align*}
and $\mathfrak{u}\in\hat{A}_{k+1}^{(\lambda,l)}$, where $\mathfrak{u}\stackrel{k}{\sim}\mathfrak{s}$ and $\mathfrak{u}^{(k)}=(\nu,l-1)$. Then the following statements hold:
\begin{enumerate}[label=(\arabic{*}), ref=\arabic{*},leftmargin=0pt,itemindent=1.5em]
\item\label{offdiag:1.1} $m_\mathfrak{u}=f_\mathfrak{s}e_k$ and $f_\mathfrak{u}e_k=e_k(\mathfrak{u,u})m_\mathfrak{u}$. 
\item\label{offdiag:1.2} If $\mathfrak{t}\stackrel{k}{\sim}\mathfrak{s}$ and $\mathfrak{t}^{(k)}=(\mu,l-1)$, then  
\[
e_k(\mathfrak{s,t})=e_k( \mathfrak{s,s})e_k(\mathfrak{t,t}) \gamma_{\lambda\to\mu}\qquad\text{and}\qquad e_k(\mathfrak{t,s})=\frac{1}{\gamma_{\lambda\to\mu}}. 
\]
\end{enumerate}
\end{proposition}
\begin{proof}
\eqref{offdiag:1.1} By definition,
\begin{align*}
p^{(k+1)}_{(\lambda,l)\to(\lambda,l)}&=1, &p^{(k)}_{(\lambda,l-1)\to(\lambda,l)}&=w_{l,i}, & \text{and}& &p_\mathfrak{s}&=w_{l,i},
\intertext{while}
p^{(k+1)}_{(\nu,l-1)\to(\lambda,l)}&=w_{l,i}e_k, & p^{(k)}_{(\lambda,l-1)\to(\nu,l-1)}&=1, &\text{and}& & p_\mathfrak{u}&=w_{l,i}e_k. 
\end{align*}
It follows that $m_\mathfrak{s}e_k=m_\mathfrak{u}$. If we write $m_\mathfrak{s}=f_\mathfrak{s}+\sum_{\mathfrak{v}\succ\mathfrak{s}}r_\mathfrak{v}f_\mathfrak{v}$, then Proposition~\ref{nonzero} and the maximality of $\mathfrak{s}$ in $\lbrace \mathfrak{v}\in\hat{A}_{k+1}^{(\lambda,l)}\mid \mathfrak{v}\stackrel{k}{\sim}\mathfrak{s}\rbrace$ give 
\begin{align*}
m_\mathfrak{u}=m_\mathfrak{s}e_k=f_\mathfrak{s}e_k+\sum_{\mathfrak{v}\succ\mathfrak{s}}r_\mathfrak{v}f_\mathfrak{v}e_k=f_\mathfrak{s}e_k. 
\end{align*}
For the second equality, we have $
e_k(\mathfrak{u,u})m_\mathfrak{u}=e_k(\mathfrak{u,u})f_\mathfrak{s}e_k=f_\mathfrak{s}e_kF_\mathfrak{u}e_k=m_\mathfrak{u}F_\mathfrak{u}e_k=f_\mathfrak{u}e_k$. 

\eqref{offdiag:1.2}  Assume that $\mu=\lambda\cup\lbrace(j,\mu_j)\rbrace$ and let $a=l+\sum_{r=0}^j\lambda_r$. Then
\begin{align*}
p_{(\mu,l-1)\to(\lambda,l)}^{(k+1)}=w_{l,i}e_kw_{i,a}\sum_{r=0}^{\lambda_j}w_{a,a-r}.
\end{align*}
Let $\mathfrak{v}\in\hat{A}_{k+1}^{(\lambda,l)}$, where $\mathfrak{v'}=\mathfrak{v}\downarrow_{k}=\mathfrak{t}^{(\mu,l-1)}\in\hat{A}_k^{(\mu,l-1)}$. Let $\Psi_{\mathfrak{v},k}$ be a polynomial in $L_{k+1}$ where $F_\mathfrak{v}=F_\mathfrak{v'}\Psi_{\mathfrak{v},k}$. Computing the inner product $\langle f_\mathfrak{v},f_\mathfrak{v}\rangle$ yields 
\begin{align}
a_{(\lambda,l)}^{(k+1)}p_\mathfrak{v}F_\mathfrak{v} p_\mathfrak{v}^*a_{(\lambda,l)}^{(k+1)}&= e_{2l-1}w_{l,i}e_kw_{i,a}a_{(\mu,l-1)}^{(k)} F_\mathfrak{v'} a_{(\mu,l-1)}^{(k)} \Psi_{\mathfrak{v},k} w_{a,i}e_kw_{i,l}e_{2l-1}\notag\\
&\equiv \langle f_\mathfrak{v'},f_\mathfrak{v'}\rangle e_{2l-1}w_{l,i}e_kw_{i,a}a_{(\mu,l-1)}^{(k)}F_\mathfrak{v}w_{a,i}e_kw_{i,l}e_{2l-1}\notag\\
&\equiv \langle f_\mathfrak{v'},f_\mathfrak{v'}\rangle a_{(\lambda,l)}^{(k+1)}p_\mathfrak{v}F_\mathfrak{v}w_{a,i}e_kw_{i,l}e_{2l-1}\mod A_{k+1}^{\rhd(\lambda,l)}.\label{char}
\end{align}
By Lemma~\ref{obs:5}, we may write
\begin{align*}
f_\mathfrak{v}w_{a,i}=f_\mathfrak{t}+\sum_\mathfrak{a}r_\mathfrak{a}f_\mathfrak{a},
\end{align*}
where the sum is over $\mathfrak{a}\in\hat{A}_{k+1}^{(\lambda,l)}$ such that $\Shape(\mathfrak{a}\downarrow_{k-1})\ne(\lambda,l-1)$.  The congruence~\eqref{char} therefore gives 
\begin{align*}
\gamma_{(\mu,l-1)\to(\lambda,l)}^{(k+1)}a_{(\lambda,l)}^{(k+1)}&\equiv a_{(\lambda,l)}^{(k+1)}p_\mathfrak{t}F_\mathfrak{t}e_kw_{i,l}e_{2l-1}\equiv \frac{e_k(\mathfrak{s,t})}{e_k(\mathfrak{s,s})}a_{(\lambda,l)}^{(k+1)}p_\mathfrak{s}F_\mathfrak{s}e_kw_{i,l}e_{2l-1}\\ 
&\equiv \frac{e_k(\mathfrak{s,t})}{e_k(\mathfrak{s,s})}a_{(\lambda,l)}^{(k+1)}p_\mathfrak{u}w_{i,l}e_{2l-1} \equiv\frac{e_k(\mathfrak{s,t})}{e_k(\mathfrak{s,s})}a_{(\lambda,l)}^{(k+1)} \mod A_{k+1}^{\rhd(\lambda,l)}. 
\end{align*}
Since $\gamma_{(\mu,l-1)\to(\lambda,l)}^{(k+1)}=e_k(\mathfrak{t,t})\gamma_{\lambda\to\mu}$ the stated formula for $e_k(\mathfrak{s,t})$ follows. The formula for $e_k(\mathfrak{t,s})$ is  obtained from the relation $e_k(\mathfrak{s,t})e_k(\mathfrak{t,s})= e_k(\mathfrak{s,s})e_k(\mathfrak{t,t})$. 
\end{proof}
\begin{proposition}\label{offdiag:2}
Let $k=2i+1$ and $(\lambda,l)\in\hat{A}_{k+1}$. Assume that 
\begin{align*}
(\nu,l)=\max\lbrace(\rho,r)\in\hat{A}_k\mid(\rho,r)\to(\lambda,l)\text{ in }\hat{A}\rbrace
\end{align*}
and  $\mathfrak{s}\in\hat{A}_{k+1}^{(\lambda,l)}$, where $\mathfrak{s}\downarrow_{k}=\mathfrak{t}^{(\lambda,l-1)}$ and $\mathfrak{s}^{(k)}=(\nu,l)$. Let $\mathfrak{u}\in\hat{A}_{k+1}^{(\lambda,l)}$, where $\mathfrak{u}\stackrel{k}{\sim}\mathfrak{s}$ and $\mathfrak{u}^{(k)}=(\lambda,l-1)$. Then the following statements hold:
\begin{enumerate}[label=(\arabic{*}), ref=\arabic{*},leftmargin=0pt,itemindent=1.5em]
\item\label{offdiag:2.1} $f_\mathfrak{s}e_k=\gamma_{\nu\to\lambda}m_\mathfrak{u}$ and $f_\mathfrak{u}e_k=\gamma_{\nu\to\lambda}e_k(\mathfrak{u,u})m_\mathfrak{u}$. 
\item\label{offdiag:2.2} If $\mathfrak{t}\stackrel{k}{\sim}\mathfrak{s}$ and $\mathfrak{t}^{(k)}=(\mu,l)$, then 
\[
e_k(\mathfrak{s,t})=\frac{\gamma_{\mu\to\lambda}}{\gamma_{\nu\to\lambda}}e_k( \mathfrak{s,s})\qquad \text{and}\qquad e_k(\mathfrak{t,s})=\frac{\gamma_{\nu\to\lambda}}{\gamma_{\mu\to\lambda}}e_k(\mathfrak{t,t}). 
\]
\end{enumerate}
\end{proposition}
\begin{proof}
\eqref{offdiag:2.1} Assume that $\lambda=(\lambda_1,\ldots,\lambda_t)$, where $\lambda_t\ne0$; then $\nu=(\lambda_1,\ldots,\lambda_t-1)$. By definition,
\begin{align*}
p_{(\nu,l)\to(\lambda,l)}^{(k+1)}&=1, & \text{and}& &p_{(\lambda,l-1)\to(\nu,l)}^{(k)}&= w_{l,i}e_{k-1}\sum_{r=0}^{\nu_t}w_{i,i-r},
\intertext{and} 
p_{(\lambda,l-1)\to(\lambda,l)}^{(k+1)}&=w_{l,i+1} &\text{and}& &p_{(\lambda,l-1)\to(\lambda,l)}^{(k)}&=1. 
\end{align*}
By direct calculation, 
\begin{align*}
a_{(\lambda,l)}^{(k+1)}p_\mathfrak{s}e_k&= a_{(\lambda,l)}^{(k+1)}w_{l,i}e_ke_{k-1}\sum_{r=0}^{\nu_t}w_{i,i-r}=a_{(\lambda,l)}^{(k+1)}w_{l,i+1}\sum_{r=0}^{\nu_t}w_{i,i-r}\\ 
&=\lambda_ta_{(\lambda,l)}^{(k+1)}w_{l,i+1}=\gamma_{\nu\to\lambda}a_{(\lambda,l)}^{(k+1)}p_\mathfrak{u}.
\end{align*}
Thus we have $m_\mathfrak{s}e_k=\gamma_{\nu\to\lambda}m_\mathfrak{u}$. If we write $m_\mathfrak{s}=f_\mathfrak{s}+\sum_{\mathfrak{v}\succ\mathfrak{s}}r_\mathfrak{v}f_\mathfrak{v}$, then Proposition~\ref{nonzero} and the maximality of $\mathfrak{s}$ in $\lbrace \mathfrak{v}\in\hat{A}_{k+1}^{(\lambda,l)}\mid \mathfrak{v}\stackrel{k}{\sim}\mathfrak{s}\rbrace$ give 
\begin{align*}
\gamma_{\nu\to\lambda}m_\mathfrak{u}=m_\mathfrak{s}e_k=f_\mathfrak{s}e_k+\sum_{\mathfrak{v}\succ\mathfrak{s}}r_\mathfrak{v}f_\mathfrak{v}e_k=f_\mathfrak{s}e_k. 
\end{align*}
For the second equality, we have $
\gamma_{\nu\to\lambda} e_k(\mathfrak{u,u})m_\mathfrak{u}=e_k(\mathfrak{u,u})f_\mathfrak{s}e_k=f_\mathfrak{s}e_kF_\mathfrak{u}e_k=m_\mathfrak{u}F_\mathfrak{u}e_k=f_\mathfrak{u}e_k$. 

\eqref{offdiag:2.2} Let $\mathfrak{t}\in\hat{A}_{k+1}^{(\lambda,l)}$, where $\mathfrak{t}\stackrel{k}{\sim}\mathfrak{s}$ and $\mathfrak{t}^{(k)}=(\mu,l)$. By part~\eqref{offdiag:2.1}, we have 
\begin{align*}
\gamma_{\nu\to\lambda}m_\mathfrak{u}=\gamma_{\nu\to\lambda}f_\mathfrak{u}+\sum_{\mathfrak{v}\succ\mathfrak{u}}e_k(\mathfrak{v,s})f_\mathfrak{v},
\end{align*}
where the sum is over $\mathfrak{v}\in\hat{A}_{k+1}^{(\lambda,l)}$ such that $\mathfrak{v}\stackrel{k}{\sim}\mathfrak{s}$. Hence $\gamma_{\nu\to\lambda}\langle m_\mathfrak{u},f_\mathfrak{t}\rangle = \langle f_\mathfrak{t},f_\mathfrak{t}\rangle e_k(\mathfrak{t,s})$. We determine $\langle m_\mathfrak{u},f_\mathfrak{t}\rangle$.  It will be convenient to work with the elements $m_\mathfrak{uu}$ and $m_\mathfrak{ut}$; to this end we note that $m_\mathfrak{uu}=a_{(\lambda,l-1)}^{(k-1)}e_k$.  If $\lambda=\mu\cup\lbrace (j,\lambda_j)\rbrace$ and $a=l+\sum_{r=0}^{j}\lambda_r$, then  the relations $s_ie_{k-1}=e_{k-1}$ and $w_{a,i+1}w_{l,i+1}=w_{l,i+1}w_{a-1,i}$ and the branching coefficients
\begin{align*}
p_{(\mu,l)\to(\lambda,l)}^{(k+1)}&=w_{a,i+1}, &\text{and}& &p_{(\lambda,l-1)\to(\mu,l)}^{(k)}&=w_{l,i}e_{k-1}w_{i,a-1}\sum_{r=0}^{\mu_j}w_{a-1,a-1-r}
\end{align*}
allow us to write
\begin{align*}
m_\mathfrak{ut}
&=w_{i+1,l}a_{(\lambda,l)}^{(k+1)}w_{l,i+1}w_{a-1,i}e_{k-1}w_{i,a-1}\sum_{r=0}^{\mu_j}w_{a-1,a-1-r} \\ 
&= a_{(\lambda,l-1)}^{(k-1)}e_kw_{a-1,i}e_{k-1}w_{i,a-1}\sum_{r=0}^{\mu_j}w_{a-1,a-1-r}. 
\end{align*}
By definition, $\langle m_\mathfrak{u},f_\mathfrak{t}\rangle m_\mathfrak{uu}= m_\mathfrak{ut}F_\mathfrak{t} m_\mathfrak{uu}$. Thus we obtain 
\begin{align}
m_\mathfrak{ut}F_\mathfrak{t} m_\mathfrak{uu}&= \langle f_\mathfrak{t},f_\mathfrak{t}\rangle^{-1}a_{(\lambda,l-1)}^{(k-1)}e_k w_{a-1,i}e_{k-1}w_{i,a-1}\sum_{r=0}^{\mu_j}w_{a-1,a-1-r}F_\mathfrak{tt}e_ka_{(\lambda,l-1)}^{(k-1)}\notag\\
&\equiv \frac{\lambda_j}{\langle f_\mathfrak{t},f_\mathfrak{t}\rangle} a_{(\lambda,l-1)}^{(k-1)}e_k w_{a-1,i}e_{k-1}w_{i,a-1}F_\mathfrak{tt}e_ka_{(\lambda,l-1)}^{(k-1)}\notag\\
&\equiv \frac{\lambda_j}{\langle f_\mathfrak{t},f_\mathfrak{t}\rangle} a_{(\lambda,l-1)}^{(k-1)}e_k w_{a-1,i}e_{k-1}F_\mathfrak{vt}e_ka_{(\lambda,l-1)}^{(k-1)}\mod A_{k+1}^{\rhd(\lambda,l)},\label{ref}
\end{align}
where we have used Lemma~\ref{obs:5} and written $\mathfrak{v}= (\cdots ((\mathfrak{t}\sigma_{2a-1})\sigma_{2a+1})\cdots)\sigma_{2i-1}$. By Proposition~\ref{nonzero}, we have $f_\mathfrak{v}e_{k-1}e_k=e_{k-1}(\mathfrak{v,v})f_\mathfrak{v}e_k$. Thus, writing $\mathfrak{r}=\mathfrak{t}\downarrow_{k-1}$ and using Corollary~\ref{obs:7}, the congruence~\eqref{ref} becomes 
\begin{align*}
m_\mathfrak{ut}F_\mathfrak{t} m_\mathfrak{uu}&\equiv e_{k-1}(\mathfrak{v,v})\frac{\lambda_j}{\langle f_\mathfrak{t},f_\mathfrak{t}\rangle} a_{(\lambda,l-1)}^{(k-1)} w_{a-1,i}e_{k}F_\mathfrak{vt}e_ka_{(\lambda,l-1)}^{(k-1)}\\
&\equiv e_{k-1}(\mathfrak{v,v})\frac{\gamma_{\mu\to\lambda}}{\langle f_\mathfrak{t},f_\mathfrak{t}\rangle} a_{(\lambda,l-1)}^{(k-1)} e_{k}F_\mathfrak{tt}e_ka_{(\lambda,l-1)}^{(k-1)}\\
&\equiv\gamma_{\mu\to\lambda} e_{k-1}(\mathfrak{v,v})e_k(\mathfrak{t,t})\langle f_\mathfrak{r},f_\mathfrak{r}\rangle m_\mathfrak{uu}\mod A_{k+1}^{\rhd(\lambda,l)}. 
\end{align*}
Since $e_{k-1}(\mathfrak{v,v})\gamma_{\mu\to\lambda}=\gamma^{(k)}_{(\lambda,l-1)\to(\mu,l)}$, the formula for $e_k(\mathfrak{t,s})$ follows from Proposition~\ref{branching}. The formula for $e_k(\mathfrak{s,t})$ is obtained from the relation  $e_k(\mathfrak{s,t})e_k(\mathfrak{t,s})= e_k(\mathfrak{s,s})e_k(\mathfrak{t,t})$. 
\end{proof}
The proof of Theorem~\ref{sigmaodd}\eqref{sigmaodd.5} and Theorem~\ref{sigmaeven}\eqref{sigmaeven.5} will use the following observation.
\begin{lemma}\label{e-v-b}
Let $(\lambda,{l})\in\hat{A}_{2i+1}$. If  $\mathfrak{t}=((\lambda^{(0)},l_0),\ldots,(\lambda^{(2i+1)},l_{2i+1}))\in\hat{A}_{2i+1}^{(\lambda,{l})}$, where $\lambda^{(2i-2)}=\lambda^{(2i-1)}\subsetneq \lambda^{(2i)}=\lambda^{(2i+1)}$, then $m_\mathfrak{t}\sigma_{2i} =m_\mathfrak{t}$.
\end{lemma}
\begin{proof}
Assume that  $\lambda=\lambda^{(2i-1)}\cup{\lbrace}(j,\lambda_j){\rbrace}$. Then, $p_{\mathfrak{t}^{(2i)}\to\mathfrak{t}^{(2i+1)}} =p_{\mathfrak{t}^{(2i-2)}\to\mathfrak{t}^{(2i-1)}}=1$ and $p_{\mathfrak{t}^{(2i-1)}\to\mathfrak{t}^{(2i)}}=w_{a,i}$, where $a= l +\sum_{r=1}^j\lambda_r$.  
For $j=0,1,\ldots,$ define
\begin{align*}
\sigma_{2j}^{(0)}=\sigma_{2j}&&\text{and}&&
\sigma_{2j}^{(r)}=w_{r,r+j+1}\sigma_{2j}^{(r-1)}w_{r+j+1,r}&&\text{for $r=1,2,\ldots.$}
\end{align*}
Since ${\sigma}_{2}=1,$ using induction, we obtain the relations
\begin{align*}
e_{2r-1}\sigma_{2j}^{(r)}=
\begin{cases}
e_{2r-1},&\text{if $j=1$,}\\
e_{2r-1}\sigma_{2j-2}^{(r+1)},
&\text{if $j\ge2.$}
\end{cases}
\end{align*}
As $\sigma_{2i-2l}\mapsto 1$ under the map $A_{2i-2l+1}\to \mathbb{F}\mathfrak{S}_{i-l}$, we have
\begin{align*}
e_{2l-1}^{(l)}w_{a,i}\sigma_{2i}=w_{a,i}e_{2l-1}^{(l)}\sigma_{2i-2l}^{(l+1)}\equiv e_{2l-1} ^{(l)} w_{a,i} \mod A_{2i+1}^{\rhd(\lambda, l )},
\end{align*}
and $a_{(\lambda,l)}p_\mathfrak{t}\sigma_{2i}\equiv a_{(\lambda,l)}p_\mathfrak{t}\mod A_{2i+1}^{\rhd(\lambda,l)}$. 
\end{proof}

\begin{proof}[Proof of Theorem~\ref{contractioneven}]
\eqref{contractioneven.1} If $\mathfrak{s}\in\hat{A}_{k+1}^{(\lambda,l)}$, where $\mathfrak{s}^{(k-1)}\ne(\lambda,l-1)$, then $e_k(\mathfrak{s,t})=0$ by Proposition~\ref{nonzero}.

\eqref{contractioneven.2} We assume that $\mathfrak{s}^{(k-1)}=(\lambda,l)$ and prove the formulae for the off-diagonal structure constants. There are three cases to consider:

{\textsc{Case~1.}} If $\mathfrak{s}^{(k)}=(\rho,l-1)$ and $\mathfrak{t}^{(k)}=(\lambda,l)$, then $e_k(\mathfrak{s,t})=\gamma_{\lambda\to\rho}^{-1}$, by Proposition~\ref{offdiag:1}\eqref{offdiag:1.2}. 

{\textsc{Case~2.}} If $\mathfrak{s}^{(k)}=(\lambda,l)$ and $\mathfrak{t}^{(k)}=(\mu,l-1)$, then $e_k(\mathfrak{s,t})=e_k(\mathfrak{s,s}) e_k(\mathfrak{t,t})\gamma_{\lambda\to\mu}$, by Proposition~\ref{offdiag:1}\eqref{offdiag:1.2}. 

{\textsc{Case~3.}} Assume that $\mathfrak{s}^{(k)}=(\rho,l-1)$ and $\mathfrak{t}^{(k)}=(\mu,m-1)$. Let $\mathfrak{v}\in\hat{A}_{k+1}^{(\lambda,l)}$, where $\mathfrak{v}\stackrel{k}{\sim}\mathfrak{t}$ and $\mathfrak{v}^{(k)}=(\lambda,l)$. Then, by the two cases above, $e_k(\mathfrak{s,v})=\gamma_{\lambda\to\rho}^{-1}$ and $e_k(\mathfrak{v,t})=e_k(\mathfrak{v,v}) e_k(\mathfrak{t,t})\gamma_{\lambda\to\mu}$. The relation $e_k(\mathfrak{s,v})e_k(\mathfrak{v,t})= e_k(\mathfrak{v,v})e_k(\mathfrak{s,t})$ now yields the formula for $e_k(\mathfrak{s,t})$ given in the third case in~\eqref{contractioneven.2.1}. 

The diagonal structure constants $e_k(\mathfrak{t,t})$ may be deduced from Lemma~\ref{r-e-s-i} and the relation~\eqref{c-f-r-1}. Alternatively, Leduc and Ram~\cite[Theorem~3.12]{MR1427801} express the diagonal structure constants $e_k(\mathfrak{t,t})$ for the contraction $e_k$ in terms of the weights of the Markov trace on $A_{k+1}$. Applying the formulae for the weights of the Markov trace given by Halverson and Ram~\cite[Proposition~3.24]{MR2143201}, we obtain expressions for the diagonal terms $e_k(\mathfrak{t},\mathfrak{t})$.
\end{proof}
\begin{proof}[Proof of Theorem~\ref{contractionodd}] 
\eqref{contractionodd.1} If $\mathfrak{s}\in\hat{A}_{k+1}^{(\lambda,l)}$, where $\mathfrak{s}^{(k-1)}\ne(\lambda,l-1)$, then $e_k(\mathfrak{s,t})=0$ by Proposition~\ref{nonzero}. 

\eqref{contractionodd.2} Assume that $\mathfrak{s}\stackrel{k}{\sim}\mathfrak{t}$ and $\mathfrak{s}^{(k)}=(\lambda,l-1)$ and denote $(\nu,l)=\min\lbrace(\rho,r)\mid (\rho,r)\to(\lambda,l)\in\hat{A}\rbrace$. Let $\mathfrak{v}\in\widehat{A}_{k+1}^{(\lambda,l)}$, where $\mathfrak{v}\stackrel{k}{\sim}\mathfrak{t}$ and $\mathfrak{v}^{(k)}=(\nu,l)$. 

{\textsc{Case~1.}} Let $\mathfrak{s}^{(k)}=(\lambda,l-1)$ and $\mathfrak{t}^{(k)}=(\mu,l)$. By Proposition~\ref{offdiag:2}\eqref{offdiag:2.1} and Proposition~\ref{offdiag:2}\eqref{offdiag:2.2} respectively, we obtain $e_k(\mathfrak{s,v})=\gamma_{\nu\to\lambda}$ and $e_k(\mathfrak{v,t})=\gamma_{\mu\to\lambda}\gamma_{\nu\to \lambda}^{-1}e_k(\mathfrak{v,v})$. Thus $e_k(\mathfrak{s,t})=\gamma_{\mu\to\lambda}$. 

{\textsc{Case~2.}} Assume that $\mathfrak{s}^{(k)}=(\rho,l)$ and $\mathfrak{t}^{(k)}=(\lambda,l-1)$. From the previous case $e_k(\mathfrak{t,s})=\gamma_{\rho\to\lambda}$, whence  $e_k(\mathfrak{s,t})=e_k(\mathfrak{s,s}) e_k(\mathfrak{t,t})\gamma_{\rho\to\lambda}^{-1}$. 

{\textsc{Case~3.}} Assume that $\mathfrak{s}^{(k)}=(\rho,l)$ and $\mathfrak{t}^{(k)}=(\mu,l)$. By Proposition~\ref{offdiag:2}\eqref{offdiag:2.2},
\begin{align*}
e_k(\mathfrak{v,t})=\frac{\gamma_{\mu\to\lambda}}{ \gamma_{\nu\to\lambda}}e_k(\mathfrak{v,v})\qquad\text{and}\qquad e_k(\mathfrak{s,v})=\frac{\gamma_{\nu\to\lambda}}{\gamma_{\rho\to\lambda}}e_k(\mathfrak{s,s}).
\end{align*}
Hence the formula for $e_k(\mathfrak{s,t})$ given in the third case in~\eqref{contractionodd.2.1} follows. 

Finally, as in the proof of Theorem~\ref{contractioneven}, the diagonal structure constants for $e_k$, for $k$ odd, may be obtained using the results of~\cite{MR1427801,MR2143201}, or via Lemma~\ref{r-e-s-i} and the relation~\eqref{c-f-r-2}. 
\end{proof}
\begin{proof}[Proof of Theorem~\ref{sigmaeven}] 
\eqref{sigmaeven.1} Assume that  $\lambda^{(k+1)}=\lambda^{(k-1)}$ and $\lambda^{(k)}=\lambda^{(k-2)}$. Proposition~\ref{t-0}\eqref{t-0-2} and Proposition~\ref{nonzero} give
\begin{align*}
(c_\mathfrak{s}(k+1)-c_\mathfrak{t}(k-1))\sigma_{k}(\mathfrak{s,t})=\delta_\mathfrak{st}+(z-c_\mathfrak{t}(k-1)-c_\mathfrak{s}(k)-e_{k-1}(\mathfrak{t,t})) e_k(\mathfrak{s,t})
\end{align*}
Observe that $c_\mathfrak{s}(k+1)=c_\mathfrak{t}(k-1)$ only if $\mathfrak{s}=\mathfrak{t}$; hence we obtain the expression~\eqref{sigmaeven.1.1} for $\sigma_k(\mathfrak{s,t})$ when $\mathfrak{s}\ne\mathfrak{t}$. From Proposition~\ref{nonzero} and the relation $L_ke_k=\sigma_ke_{k-1}e_k$, we obtain
\begin{align*}
c_\mathfrak{t}(k)f_\mathfrak{t}e_k=f_\mathfrak{t}\sigma_ke_{k-1}e_k= \sum_{\mathfrak{u}\stackrel{k-1}{\sim}\mathfrak{s}\stackrel{k-1}{\sim}\mathfrak{t}}e_{k-1}(\mathfrak{u,s})\sigma_k(\mathfrak{s,t})f_\mathfrak{u}e_k=e_{k-1}(\mathfrak{t,t})\sigma_k(\mathfrak{t,t})f_\mathfrak{t}e_k
\end{align*}
which gives the expression for $\sigma_k(\mathfrak{t,t})$.

\eqref{sigmaeven.2} Assume that  $\lambda^{(k+1)}\ne \lambda^{(k-1)}$ and $\lambda^{(k)}=\lambda^{(k-2)}$. Together with Lemma~\ref{s-q} and Proposition~\ref{nonzero}, the relation $L_ke_k=\sigma_ke_{k-1}e_k$ gives
\begin{align*}
c_\mathfrak{t}(k)f_\mathfrak{t}e_{k}=c_\mathfrak{t}(k)e_k(\mathfrak{v,t})e_{k}(\mathfrak{v,v})^{-1}f_\mathfrak{v}e_k=\sigma_{k}(\mathfrak{v,t})e_{k-1}(\mathfrak{v,v})f_\mathfrak{v}e_k.
\end{align*}
Since $e_{k-1}(\mathfrak{v,v}) e_k(\mathfrak{v,v})=1$, the the expression~\eqref{sigmaeven.2.2} for $\sigma_k(\mathfrak{v,t})$  follows. Next, Proposition~\ref{t-0}\eqref{t-0-2} and Proposition~\ref{nonzero} give
\begin{align*}
(c_\mathfrak{s}(k+1)-c_\mathfrak{t}(k-1))\sigma_k(\mathfrak{s,t})=\delta_\mathfrak{st}-e_{k-1}(\mathfrak{v,t}) e_k(\mathfrak{s,v}).
\end{align*}
Using the fact $\lbrace \mathfrak{s}\in \hat{A}_{k+1}^{(\lambda,l)}\mid\mathfrak{s}\stackrel{k}{\approx}\mathfrak{t}\text{ and }c_\mathfrak{s}(k+1)=c_\mathfrak{t}(k-1)\rbrace=\lbrace\mathfrak{t}\rbrace$, which is readily verified from the assumptions on $\mathfrak{t}$, we obtain the expression~\eqref{sigmaeven.2.1} for $\sigma_k(\mathfrak{s,t})$. 

\eqref{sigmaeven.3} Assume that  $\lambda^{(k+1)}=\lambda^{(k-1)}$ and $\lambda^{(k)}\ne\lambda^{(k-2)}$. By Proposition~\ref{t-0}\eqref{t-0-2} and Proposition~\ref{nonzero},
\begin{align}\label{differentk}
(c_\mathfrak{s}(k+1)-c_\mathfrak{t}(k-1))\sigma_k(\mathfrak{s,t})=\delta_\mathfrak{st}+(z-c_\mathfrak{t}(k-1)-c_\mathfrak{s}(k))e_k(\mathfrak{s,t}).
\end{align}
Note that $c_\mathfrak{s}(k+1)=c_\mathfrak{t}(k-1)$ if and only if $\rho^{(k)}=\lambda^{(k-2)}$; hence, when $\rho^{(k)}\ne\lambda^{(k-2)}$, the relation~\eqref{differentk} yields the expression~\eqref{sigmaeven.3.1} for $\sigma_k(\mathfrak{s,t})$. If $\rho^{(k)}=\lambda^{(k-2)}$, we use the relation $\langle f_\mathfrak{t}\sigma_k,f_\mathfrak{s}\rangle =\langle f_\mathfrak{t},f_\mathfrak{s}\sigma_k\rangle$ to obtain the expression~\eqref{sigmaeven.3.2} for $\sigma_k(\mathfrak{s,t})$. 

\eqref{sigmaeven.4} Assume  that $\lambda^{(k+1)}\ne\lambda^{(k-1)}$ and $\lambda^{(k)}\ne\lambda^{(k-2)}$, where $\mathfrak{t}\sigma_k$ does not exist.  There are two cases to consider here.

{\textsc{Case~1.}} If $\lambda^{(k+1)}\ominus\lambda^{(k-2)}=\lbrace \alpha,\beta\rbrace$, where $\alpha,\beta$ are either in the same row or the same column, then $\lambda^{(k-2)}\supsetneq \lambda^{(k-1)}=\lambda^{(k)}\supsetneq\lambda^{(k+1)}$. Thus $c_\mathfrak{t}(k+1)\ne c_\mathfrak{t}(k-1)$ and the relation~\eqref{sigmaeven.4.1} follows from  Lemma~\ref{involutions} and Proposition~\ref{t-0}\eqref{t-0-1}. 

 {\textsc{Case~2.}}  If $\lambda^{(k+1)}\ominus\lambda^{(k-2)}$ consists of a single node then $\lambda^{(k-2)}=\lambda^{(k-1)}\subsetneq\lambda^{(k)}=\lambda^{(k+1)}$. Thus $c_\mathfrak{t}(k+1)\ne c_\mathfrak{t}(k-1)$ and the relation~\eqref{sigmaeven.4.1} follows from  Lemma~\ref{onebox} and Proposition~\ref{t-0}\eqref{t-0-1}. 

\eqref{sigmaeven.5} Suppose that $\lambda^{(k+1)}\ne\lambda^{(k-1)}$ and $\lambda^{(k)}\ne\lambda^{(k-2)}$, and that $\mathfrak{t}\sigma_{k}$ exists. By Lemma~\ref{involutions}, 
$\sigma_k(\mathfrak{s,t})=0$ unless $\mathfrak{s}\in\lbrace\mathfrak{t},\mathfrak{t}\sigma_{k}\rbrace$. Furthermore, the assumptions on $\mathfrak{t}$ imply that $f_\mathfrak{t} e_{k-1}=f_\mathfrak{t} e_{k}=0$.
Thus, Proposition~\ref{t-0}\eqref{t-0-2} yields $
(c_\mathfrak{t}(k+1)-c_\mathfrak{t}(k-1))\sigma_{k}(\mathfrak{t},\mathfrak{t})=1$, 
and the expression for the diagonal structure constant $\sigma_{k}(\mathfrak{t},\mathfrak{t})$ follows.

Next, we show that if $\mathfrak{s}=\mathfrak{t}\sigma_{k}$ and $\mathfrak{t}\succ\mathfrak{s}$, then $\sigma_{k}(\mathfrak{s},\mathfrak{t})=1$. There are two cases to consider.

{\textsc{Case 1.}} Assume that $\lambda^{(k-2)}\supsetneq\lambda^{(k-1)}$ and $\lambda^{(k-1)}\subsetneq\lambda^{(k)}=\lambda^{(k+1)}$. Since we may write
\begin{align*}
f_\mathfrak{t} \sigma_{k}&=
m_{\mathfrak{t}^{(\lambda,l)}} p_\mathfrak{t}\sigma_{k}+
\sum_{\mathfrak{u}\succ\mathfrak{t}}r_\mathfrak{u}f_\mathfrak{u} \sigma_{k},
\end{align*}
it is sufficient to demonstrate that
\begin{align}\label{b-r-1}
\sigma_{k}(\mathfrak{s,u})=0
\quad\text{whenever $\mathfrak{u}\in\hat{A}_{k+1}^{(\lambda, l )}$ and $\mathfrak{u}\succ\mathfrak{t}$,}
\end{align}
and 
\begin{align}\label{b-r-2}
m_{\mathfrak{t}^{(\lambda,l)}} p_\mathfrak{t}\sigma_{k}=
m_{\mathfrak{t}^{(\lambda,l)}} p_\mathfrak{s}.
\end{align}
To prove~\eqref{b-r-1}, suppose that $\mathfrak{u}\succ\mathfrak{t}$. If $\sigma_{k}(\mathfrak{s},\mathfrak{u})\ne0$, then $\mathfrak{u}\stackrel{k}{\approx}\mathfrak{s}$ and  $\mathfrak{u}\in{\lbrace}\mathfrak{t},\mathfrak{s}{\rbrace}$, in contradiction to the assumption that  $\mathfrak{u}\succ\mathfrak{t}\succ\mathfrak{s}$. Thus~\eqref{b-r-1} holds. We therefore verify~\eqref{b-r-2}. Let $k=2i$ and, to simplify notation, write 
\begin{align*} 
\big(\lambda^{(k-2)},\lambda^{(k-1)},\lambda^{(k)},\lambda^{(k+1)}\big)=(\nu,\mu,\lambda,\lambda),
\end{align*}
where $\lambda=\mu\cup{\lbrace}(j,\lambda_j){\rbrace}$ and $\nu=\mu\cup{\lbrace}({j'},\nu_{j'}){\rbrace}$. Let $\mathfrak{s}=((\rho^{(0)},r_0),\ldots,(\rho^{(k+1)},r_{k+1}))\in \hat{A}_{k+1}^{(\lambda,l)}$, where $\mathfrak{s}=\mathfrak{t}\sigma_k$ and $\mathfrak{t}\succ\mathfrak{s}$. We write 
\begin{align*}
\big(\rho^{(k-2)},\rho^{(k-1)},\rho^{(k)},\rho^{(k+1)}\big)
=(\nu,\nu,\upsilon,\lambda),
\end{align*}
where $\upsilon=\nu\cup{\lbrace}(j,\lambda_j){\rbrace}=\lambda\cup{\lbrace}({j'},\nu_{j'}){\rbrace}$. There is no loss of generality in assuming that $\mathfrak{t}\downarrow_{k-2}$ is maximal in $\hat{A}_{k-2}^{(\nu,l-1)}$. In this case, the branching coefficients for $\mathfrak{t}$ are 
\begin{align*}
p_{(\lambda,l)\to(\lambda,l)}^{(k+1)}=1\qquad\text{and}\qquad
p_{(\mu,l)\to(\lambda,l)}^{(k)}=w_{a_j,k},
\end{align*}
where $a_j= l +\sum_{r=1}^j\lambda_j$, and 
\begin{align*}
p_{(\nu,l-1)\to(\mu,l)}^{(k-1)}
=w_{l,i-1}p_{k-2}w_{i-1,a_{j'}}\sum_{r=0}^{\mu_{j'}}w_{a_{j'},a_{j'}-r},
\end{align*}
where $a_{j'}= l -1+\sum_{r=1}^{j'}\nu_r$. 
The branching coefficients for $\mathfrak{s}$ are 
\begin{align*}
p_{(\upsilon,l-1)\to(\lambda,l)}^{(k+1)}=
w_{l,i}e_{k}w_{i,\alpha_{j'}}\sum_{r=0}^{\lambda_{j'}}
w_{\alpha_{j'},\alpha_{j'}-r},
\end{align*}
where $\alpha_{j'}= l -1+\sum_{r=1}^{j'}\upsilon_r$, and 
\begin{align*}
p_{(\nu,l-1)\to(\upsilon,l-1)}^{(k)}=w_{\alpha_j,i},
\qquad\text{and}\qquad
p_{(\nu,l-1)\to(\nu,l-1)}^{(k-1)}=1,
\end{align*}
where $\alpha_j= l -1+\sum_{r=1}^j\upsilon_r$. Therefore,
\begin{align*}
m_{\mathfrak{t}^{(\lambda,l)}} p_\mathfrak{t}\sigma_{k}
&=m_{\mathfrak{t}^{(\lambda,l)}} w_{a_j,i}w_{ l ,i-1}e_{k-2}w_{i-1,a_{j'}}\bigg(\sum_{r=0}^{\mu_{j'}}w_{a_{j'},a_{j'}-r}\bigg)\sigma_{k}\\
&=m_{\mathfrak{t}^{(\lambda,l)}} w_{a_j,i}w_{ l ,i-1}\sigma_{k-1}e_{k-1}w_{i,a_{j'}}\sum_{r=0}^{\mu_{j'}}w_{a_{j'},a_{j'}-r}.
\end{align*}
Now, $m_{\mathfrak{t}^{(\lambda,l)}} w_{a_j,i}w_{ l ,i-1}=m_{\mathfrak{t}^{(\lambda,l)}} p_\mathfrak{u}$, where $\mathfrak{u}\in\hat{A}_{k+1}^{(\lambda,l)}$ is determined by the condition that $\mathfrak{u}\downarrow_{k-3}$ is maximal in $\hat{A}_{k-3}^{(\mu,l-1)}$ and 
\begin{align*}
\big(\mathfrak{u}^{(k-3)},\mathfrak{u}^{(k-2)},\mathfrak{u}^{(k-1)},\mathfrak{u}^{(k)},\mathfrak{u}^{(k+1)}\big)=((\mu,l-1),(\mu,l),(\mu,l),(\lambda,l),(\lambda,l)).
\end{align*}
By Proposition~\ref{e-v-b}, we have $m_{\mathfrak{t}^{(\lambda,l)}} p_\mathfrak{u}\sigma_{k}= m_{\mathfrak{t}^{(\lambda,l)}} p_\mathfrak{u}$ 
and
\begin{align}\label{twocases}
m_{\mathfrak{t}^{(\lambda,l)}} w_{a_j,i}w_{ l ,i-1}\sigma_{k-1}e_kw_{i,a_{j'}}\sum_{r=0}^{\mu_{j'}}w_{a_{j'},a_{j'}-r}
= m_{\mathfrak{t}^{(\lambda,l)}} w_{a_j,i}w_{ l ,i-1}s_{i-1}e_kw_{i,a_{j'}}\sum_{r=0}^{\mu_{j'}}w_{a_{j'},a_{j'}-r}.
\end{align}
There are two sub-cases to consider

{\textsc{Case~1.1.}} Assume that $j<j'$. Using the braid relation, the right hand side of the relation~\eqref{twocases} becomes
\begin{align*}
m_{\mathfrak{t}^{(\lambda,l)}}  w_{a_j,i}w_{l,i}e_k w_{i,a_{j'}}\sum_{r=0}^{\mu_{j'}}w_{a_{j'},a_{j'}-r}&=
m_{\mathfrak{t}^{(\lambda,l)}} w_{l,i}e_k w_{a_j-1,i-1}w_{i,a_{j'}}\sum_{r=0}^{\mu_{j'}}w_{a_{j'},a_{j'}-r}\\
&=m_{\mathfrak{t}^{(\lambda,l)}} w_{l,i} e_kw_{i,a_{j'}+1}w_{a_j-1,i} \sum_{r=0}^{\mu_{j'}}w_{a_{j'},a_{j'}-r}\\
&=m_{\mathfrak{t}^{(\lambda,l)}}  w_{l ,i} e_k w_{i,a_{j'}+1} \bigg(\sum_{r=0}^{\mu_{j'}}w_{a_{j'}+1,a_{j'}-r+1}\bigg)w_{a_j-1,i}\\
&=m_{\mathfrak{t}^{(\lambda,l)}}  w_{ l ,i} e_k w_{i,\alpha_{j'}}\bigg(\sum_{r=0}^{\lambda_{j'}}w_{\alpha_{j'},\alpha_{j'}-r}\bigg)w_{\alpha_j,i}\\
&=m_{\mathfrak{t}^{(\lambda,l)}} p_{\mathfrak{s}},
\end{align*}
since, in this case, $a_{j'}=\alpha_{j'}-1$ and $w_{a_j-1,i-1}w_{i,a_{j'}}=w_{i,a_{j'}+1}w_{a_j-1,i}$. 

{\textsc{Case~1.2.}} Assume that $j'<j$. In this case $a_j=\alpha_j$ and $a_{j'}=\alpha_{j'}$, and 
\begin{align*}
m_{\mathfrak{t}^{(\lambda,l)}}   w_{a_j,i}w_{l,i}e_k w_{i,a_{j'}}\sum_{r=0}^{\mu_{j'}}w_{a_{j'},a_{j'}-r}
&=m_{\mathfrak{t}^{(\lambda,l)}} w_{l,k}e_k w_{i,a_{j'}}w_{a_j,i}\sum_{r=0}^{\mu_{j'}}w_{a_{j'},a_{j'}-r}\\
&=m_{\mathfrak{t}^{(\lambda,l)}}w_{l,i}e_k w_{i,a_{j'}}\bigg(\sum_{r=0}^{\lambda_j}w_{a_{j'},a_{j'}-r}\bigg)w_{a_j,i}\\
&=m_{\mathfrak{t}^{(\lambda,l)}}   p_\mathfrak{s}.
\end{align*}
This completes the proof of the relation~\eqref{b-r-2}. 

{\textsc{Case 2.}} Assume that $\lambda^{(k-2)}\supsetneq \lambda^{(k-1)}=\lambda^{(k)}\supsetneq \lambda^{(k+1)}$. We regard $\hat{A}_{k+1}^{(\lambda,l)}$ as a subset of $\hat{A}_{k+2}^{(\lambda,l+1)}$ and embed $\Delta_{k+1,\mathbb{F}}^{(\lambda,l)}$  in $\Delta_{k+2,\mathbb{F}}^{(\lambda,l+1)}$. Let $\mathfrak{t}=((\lambda^{(0)},l_0),\ldots,(\lambda^{(k+2)},l_{k+2}))\in\hat{A}_{k+2}^{(\lambda,l+1)}$, where $\lambda^{(k-2)}\supsetneq \lambda^{(k-1)}=\lambda^{(k)}\supsetneq \lambda^{(k+1)}=\lambda^{(k+2)}$. To simplify notation, write 
\begin{align*}
(\lambda^{(k-2)},\lambda^{(k-1)},\lambda^{(k)},\lambda^{(k+1)},\lambda^{(k+2)})=
(\mu,\nu,\nu,\lambda,\lambda),
\end{align*}
where $\nu=\lambda\cup\lbrace(j,\nu_j)\rbrace$ and $\mu=\nu\cup \lbrace (j',\mu_{j'})$. We assume that  $(j,\nu_j)$ and $(j',\mu_{j'})$ are neither in the same row nor in the same column. Then observe that  $f_\mathfrak{t}\sigma_{k+1}=f_\mathfrak{t}$, by Theorem~\ref{sigmaodd}\eqref{sigmaodd.4}. Since $\sigma_{k+1}\sigma_{k}=\sigma_{2i+1}\sigma_{2i}=s_i$, we have the relation
\begin{align}\label{trans}
\sigma_{k}(\mathfrak{u},\mathfrak{v})=s_i(\mathfrak{u},\mathfrak{v}), \qquad\text{for all $\mathfrak{u,v}\in\hat{A}_{k+2}^{(\lambda, l+1 )}$.}
\end{align}
We therefore assume that $\mathfrak{s}=\mathfrak{t}\sigma_k$, where $\mathfrak{t}\succ\mathfrak{s}$, and show that 
\begin{align}\label{transposition}
s_i(\mathfrak{s,t})=1. 
\end{align}
As above, it is sufficient to demonstrate that 
\begin{align}\label{switch}
m_{\mathfrak{t}^{(\lambda,l+1)}}p_\mathfrak{t}s_i= m_{\mathfrak{t}^{(\lambda,l+1)}}p_\mathfrak{s}
\end{align}
and
\begin{align}\label{dominates}
s_i(\mathfrak{s,u})=0\quad\text{whenever $\mathfrak{u}\in\hat{A}^{(\lambda,l+1)}_{k+2}$ and $\mathfrak{u}\succ\mathfrak{t}$.}
\end{align}
If $\mathfrak{u}\succ\mathfrak{t}$ and $s_i(\mathfrak{s,u})\ne0$, then the relations~\eqref{trans} implies that $\mathfrak{u}\stackrel{k}{\approx}\mathfrak{t}$. Hence, by Lemma~\ref{involutions}, we have $\mathfrak{u}\in\lbrace \mathfrak{s,t}\rbrace$, in contradiction to the assumption $\mathfrak{u}\succ\mathfrak{t}\succ\mathfrak{s}$.  This verifies the condition~\eqref{dominates}. 

We now verify the relation~\eqref{switch}. Let $\mathfrak{s}=((\rho^{(0)},r_0),\ldots,(\rho^{(k+2)},r_{k+2}))\in \hat{A}_{k+2}^{(\lambda,l+1)}$, where $\mathfrak{s}=\mathfrak{t}\sigma_k$ and $\mathfrak{t}\succ\mathfrak{s}$. To simplify notation, write
\begin{align*}
(\rho^{(k-2)},\rho^{(k-1)},\rho^{(k)},\rho^{(k+1)},\rho^{(k+2)})=
(\mu,\upsilon,\upsilon,\lambda,\lambda),
\end{align*}
where $\nu=\lambda\cup{\lbrace}(j,\nu_j){\rbrace}$ and $\mu=\nu\cup{\lbrace}(j',\upsilon_{j'}){\rbrace}=\upsilon\cup{\lbrace}(j,\nu_j){\rbrace}$. There is no loss of generality in assuming that $\mathfrak{t}\downarrow_{k-2}$ is maximal in $\hat{A}_{k-2}^{(\mu,l-1)}$. Then the branching coefficients for $\mathfrak{t}$ are 
\begin{align*}
p_{(\lambda,l)\to(\lambda,l+1)}^{(k+2)}
=w_{l,i+1}\qquad\text{and}\qquad 
p_{(\nu,l-1)\to(\lambda,l)}^{(k+1)}=
w_{l-1,i}e_{k}w_{i,a_j}\sum_{r=0}^{\lambda_j}w_{a_j,a_j-r},
\end{align*}
where $a_j= l -2+\sum_{r=1}^j\nu_r$, and 
\begin{align*}
p_{(\nu,l-2)\to(\nu,l-1)}^{(k)}=w_{l-2,i} \qquad\text{and}\qquad 
p_{(\mu,l-3)\to(\nu,l-2)}^{(k-1)}=w_{l-3,i-1}e_{k-2}w_{i-1,a_{j'}}\sum_{r=0}^{\nu_{j'}}
w_{a_{j'},a_{j'}-r},
\end{align*}
where $a_{j'}= l-4+\sum_{r=1}^{j'}\mu_r$. The branching coefficients for $\mathfrak{s}$ are 
\begin{align*}
p_{(\lambda,l)\to(\lambda,l+1)}^{(k+2)}
=w_{l,i+1}\qquad\text{and}\qquad
p_{(\upsilon,l-1)\to(\lambda,l)}^{(k+1)}
=
w_{l-1,i}e_kw_{i,\alpha_{j'}}
\sum_{r=0}^{\lambda_{j'}}w_{\alpha_{j'},\alpha_{j'}-r},
\end{align*}
where $\alpha_{j'}= l -2+\sum_{r=1}^{j'}\upsilon_r=a_{j'}+1$, and 
\begin{align*}
p_{(\upsilon,l-2)\to(\upsilon,l-1)}^{(k)}=w_{l-2,i} \qquad\text{and}\qquad
p_{(\mu,l-3)\to(\upsilon,l-2)}^{(k-1)}= w_{ l -3,i-1}e_{k-2}w_{i-1,\alpha_{j}}
\sum_{r=0}^{\upsilon_{j}}w_{\alpha_{j},\alpha_{j}-r},
\end{align*}
where $\alpha_{j}= l -4+\sum_{r=1}^{j}\mu_r=a_{j}-2$. Using the braid relation, we obtain
\begin{equation*}
\begin{split}
m_{\mathfrak{t}^{(\lambda,l+1)}}p_\mathfrak{t}s_{k}
&=m_{\mathfrak{t}^{(\lambda,l+1)}} w_{l,i+1}w_{l-1,i}e_{k}w_{i,a_j}w_{l-2,i}w_{l-3,i-1}e_{k-2}w_{i-1,a_{j'}}s_i\\
&\qquad\times 
\bigg(\sum_{r=0}^{\upsilon_j}w_{\alpha_j,\alpha_j-r}\bigg)
\bigg(\sum_{r=0}^{\nu_{j'}}w_{a_{j'},a_{j'}-r}\bigg)
\end{split}
\end{equation*}
and
\begin{equation*}
\begin{split}
m_{\mathfrak{t}^{(\lambda,l+1)}}p_\mathfrak{s}
&=m_{\mathfrak{t}^{(\lambda,l+1)}} w_{l,i+1}w_{l -1,i}e_{k}w_{i,\alpha_{j'}}
w_{l-2,i}w_{l-3,i-1}e_{k-2}w_{i-1,\alpha_j}\\
&\qquad\times 
\bigg(\sum_{r=0}^{\nu_{j'}}w_{a_{j'},a_{j'}-r}\bigg)
\bigg(\sum_{r=0}^{\upsilon_j}w_{\alpha_j,\alpha_j-r}\bigg). 
\end{split}
\end{equation*}
The relation~\eqref{switch} will therefore follow from 
\begin{align}\label{c-l-3}
e_kw_{i,a_j}w_{l-2,i}w_{l-3,i-1}e_{k-2}w_{i-1,a_{j'}}s_i
=e_{k}w_{i,\alpha_{j'}}w_{l-2,i}w_{l-3,i-1}e_{k-2}w_{i-1,\alpha_j}.
\end{align}
Considering the left hand side of~\eqref{c-l-3}, the braid relation gives
\begin{align*}
&e_{k}w_{i,a_j}w_{l-2,i}w_{l-3,i-1}e_{k-2}w_{i-1,a_{j'}}s_i
=e_{k}w_{l-2,i}w_{l-3,i-1}e_{k-2}w_{i-2,a_j-2}w_{i-1,a_{j'}}s_i\\
&\qquad\qquad=w_{l-2,i-1}w_{l-3,i-2}e_{k}s_{i-1}s_{i-2}e_{k-2}w_{i-2,a_j-2}w_{i-1,a_{j'}}s_i\\
&\qquad\qquad=w_{l-2,i-1}w_{l-3,i-2}e_{k-4}e_{k}s_{i-1}s_{i-2}s_iw_{i-2,a_j-2}w_{i-1,a_{j'}}\\
&\qquad\qquad=w_{l-2,i-1}w_{l-3,i-2}e_{k-4}e_{k}s_{i-1}w_{i-2,a_j-2}w_{i-1,a_{j'}},
\end{align*}
since $e_{2j-2}e_{2j+2}=e_{2j-2}e_{2j+2}s_{j}s_{j-1}s_{j+1}s_{j}$. Considering the right hand side of~\eqref{c-l-3}, the braid relation gives
\begin{align*}
&e_{k}w_{i,\alpha_{j'}}w_{l-2,i}w_{l-3,i-1}e_{k-2}w_{i-1,\alpha_j}
=e_{k}w_{l-2,i}w_{l-3,i-1}e_{k-2}w_{i-2,\alpha_{j'}-2}w_{i-1,\alpha_j}\\
&\qquad\qquad=w_{l-2,i-1}w_{l-3,i-2}e_{k}s_{i-1}s_{i-2}e_{k-2}w_{i-1,\alpha_j}w_{i-1,\alpha_{j'}-1}\\
&\qquad\qquad=w_{l-2,i-1}w_{l-3,i-2}e_{k-4}e_{k}s_{i-1}w_{i-2,\alpha_j}w_{i-1,\alpha_{j'}-1},
\end{align*}
which completes the proof of~\eqref{c-l-3} and~\eqref{switch}. Thus $\sigma_{k}(\mathfrak{s},\mathfrak{t})=1$ whenever $\mathfrak{s}=\mathfrak{t}\sigma_{k}$ and $\mathfrak{t}\succ\mathfrak{s}$.  

If $\mathfrak{s}=\mathfrak{t}\sigma_k$ and $\mathfrak{s}\succ\mathfrak{t}$, the structure constant $\sigma_k(\mathfrak{s,t})$ is derived using the relation $\sigma_k^2=1$ and the observation that $c_\mathfrak{s}(k-1)=c_\mathfrak{t}(k+1)$ and $c_\mathfrak{s}(k+1)=c_\mathfrak{t}(k-1)$. 
\end{proof}
\begin{proof}[Proof of Theorem~\ref{sigmaodd}] 
\eqref{sigmaodd.1} Assume that  $\lambda^{(k+1)}=\lambda^{(k-1)}$ and $\lambda^{(k)}=\lambda^{(k-2)}$. Proposition~\ref{t-0}\eqref{t-0-1} and Proposition~\ref{nonzero} give
\begin{equation}\label{oddoffdiag.1}
\begin{split}
(c_\mathfrak{s}(k+1)-c_\mathfrak{t}(k-1))\sigma_k(\mathfrak{s,t})&= \delta_\mathfrak{st}-c_\mathfrak{t}(k-1)e_{k-1}(\mathfrak{s,t})-e_{k-1}(\mathfrak{t,t})e_k(\mathfrak{s,t})\\ &\qquad +c_\mathfrak{t}(k-1)e_{k-1}(\mathfrak{t,t}) e_k(\mathfrak{t,t})e_{k-1}(\mathfrak{s,t}).
\end{split}
\end{equation}
Observe that $e_{k-1}(\mathfrak{t,t}) e_k(\mathfrak{t,t})=1$ and $c_\mathfrak{s}(k+1)=c_\mathfrak{t}(k-1)$ if and only if $\mathfrak{s}=\mathfrak{t}$. Hence the expression~\eqref{sigmaodd.1.1} for the off-diagonal structure constant $\sigma_k(\mathfrak{s,t})$ follows from~\eqref{oddoffdiag.1}. Next, we use the relation $L_{k-1}e_{k-1}=\sigma_ke_ke_{k-1}$ in 
\begin{align*}
c_\mathfrak{t}(k-1)f_\mathfrak{t}e_{k-1}=f_\mathfrak{t}\sigma_ke_ke_{k-1}= \sum_{\mathfrak{u}\stackrel{k}{\sim}\mathfrak{s}\stackrel{k}{\sim}\mathfrak{t}}e_k(\mathfrak{u,s})\sigma_{k}(\mathfrak{s,t})f_\mathfrak{u}e_{k-1}=e_k(\mathfrak{t,t})\sigma_k(\mathfrak{t,t})f_\mathfrak{t}e_{k-1}
\end{align*}
to obtain the expression~\eqref{sigmaodd.1.2} for $\sigma_k(\mathfrak{t,t})$. 

\eqref{sigmaodd.2} Assume that  $\lambda^{(k+1)}\ne \lambda^{(k-1)}$ and $\lambda^{(k)}=\lambda^{(k-2)}$. Together with Proposition~\ref{nonzero}, the relation $L_{k-1}e_{k-1}=\sigma_ke_ke_{k-1}$ gives
\begin{align*}
c_\mathfrak{t}(k-1)f_\mathfrak{t}e_{k-1}=c_\mathfrak{t}(k-1)e_{k-1}(\mathfrak{v,t})e_{k-1}(\mathfrak{v,v})^{-1}f_\mathfrak{v}e_{k-1}=\sigma_k(\mathfrak{v,t})e_k(\mathfrak{v,v})f_\mathfrak{v}e_{k-1}. 
\end{align*}
Since $e_{k-1}(\mathfrak{v,v})e_k(\mathfrak{v,v})=1$, the expression~\eqref{sigmaodd.2.1} for $\sigma_k(\mathfrak{v,t})$ follows.  Next, Proposition~\ref{t-0}\eqref{t-0-1} and Proposition~\ref{nonzero} give
\begin{equation}\label{oddsigmas}
\begin{split}
(c_\mathfrak{s}(k+1)-c_\mathfrak{t}(k-1))\sigma_k(\mathfrak{s,t})&=\delta_\mathfrak{st}-c_\mathfrak{t}(k-1)e_{k-1}(\mathfrak{s,t})-e_{k-1}(\mathfrak{v,t}) e_k(\mathfrak{s,v})\\
&\qquad+e_{k-1}(\mathfrak{v,t})c_\mathfrak{v}(k-1)e_k(\mathfrak{v,v})e_{k-1}(\mathfrak{s,v}).
\end{split}
\end{equation}
Making substitutions for the relations 
\begin{align*}
e_{k-1}(\mathfrak{s,v}) e_{k-1}(\mathfrak{v,t})= e_{k-1}(\mathfrak{v,v})e_{k-1}(\mathfrak{s,t})\qquad\text{and}\qquad e_{k-1}(\mathfrak{v,v}) e_k(\mathfrak{v,v})=1
\end{align*}
in the last summand on the right hand side of the expression~\eqref{oddsigmas} gives 
\begin{equation}
\begin{split}\notag
(c_\mathfrak{s}(k+1)-c_\mathfrak{t}(k-1))\sigma_k(\mathfrak{s,t})&=\delta_\mathfrak{st}+(c_\mathfrak{v}(k-1)-c_\mathfrak{t}(k-1))e_{k-1}(\mathfrak{s,t})\\
&\qquad-e_{k-1}(\mathfrak{v,t}) e_k(\mathfrak{s,v}).
\end{split}
\end{equation}
Using the fact $\big\lbrace \mathfrak{s} \in\hat{A}_{k+1}^{(\lambda,l)}\mid  \mathfrak{s}\stackrel{k}{\approx}\mathfrak{t},\text{ and }c_\mathfrak{s}(k+1)=c_\mathfrak{t}(k-1)\big\rbrace=\big\lbrace \mathfrak{t}\big\rbrace$, which is readily verified from the assumptions on $\mathfrak{t}$, we obtain the expression~\eqref{sigmaodd.2.2} for $\sigma_k(\mathfrak{s,t})$.

\eqref{sigmaodd.3} Assume that  $\lambda^{(k+1)}=\lambda^{(k-1)}$ and $\lambda^{(k)}\ne\lambda^{(k-2)}$. Proposition~\ref{nonzero} implies that $f_\mathfrak{t}e_{k-1}=0$. Therefore, Proposition~\ref{t-0}\eqref{t-0-2} gives $(c_\mathfrak{s}(k+1)-c_\mathfrak{t}(k-1))\sigma_k(\mathfrak{s,t})=\delta_\mathfrak{st}$, and, when $\rho^{(k)}\ne\lambda^{(k-2)}$, we obtain the expression~\eqref{sigmaodd.3.1} for $\sigma_k(\mathfrak{s,t})$. If $\rho^{(k)}=\lambda^{(k-2)}$, we use the relation $\langle f_\mathfrak{t}\sigma_k,f_\mathfrak{s}\rangle = \langle f_\mathfrak{t},f_\mathfrak{s}\sigma_k\rangle$ to obtain the expression~\eqref{sigmaodd.3.2} for $\sigma_k(\mathfrak{s,t})$. 

\eqref{sigmaodd.4} Assuming that  $\lambda^{(k+1)}\ne \lambda^{(k-1)}$ and $\lambda^{(k)}\ne\lambda^{(k-2)}$ and that $\mathfrak{t}\sigma_k$ does not exist, we argue as in the proof of Theorem~\ref{sigmaeven}\eqref{sigmaeven.4} to obtain the expression~\eqref{sigmaodd.4.1} for $\sigma_{k}(\mathfrak{s,t})$.

\eqref{sigmaodd.5} Assume that  $\lambda^{(k+1)}\ne \lambda^{(k-1)}$ and $\lambda^{(k)}\ne\lambda^{(k-2)}$ and that $\mathfrak{t}\sigma_k$ exists. Lemma~\ref{similar} and Lemma~\ref{involutions} give $
f_\mathfrak{t}\sigma_k=\sigma_{k}(\mathfrak{t,t})f_\mathfrak{t}+\sigma_k(\mathfrak{s,t})f_\mathfrak{s}$, 
where $\mathfrak{s}=\mathfrak{t}\sigma_k$, while Proposition~\ref{nonzero} and the assumptions on $\mathfrak{t}$ imply that $f_\mathfrak{t}e_{k-1}=f_\mathfrak{t}e_k=0$. Using Proposition~\ref{t-0}\eqref{t-0-1}, we obtain the relation $(c_\mathfrak{t}(k+1)-c_\mathfrak{t}(k-1))\sigma_k(\mathfrak{t,t})=1$ and the expression for $\sigma_k(\mathfrak{t,t})$ in~\eqref{sigmaodd.5.1}. 

Next, we show that if $\mathfrak{s}=\mathfrak{t}\sigma_k$ and $\mathfrak{t}\succ\mathfrak{s}$, then $\sigma_k(\mathfrak{s,t})=1$. Expressing $f_\mathfrak{t}$ in terms of the Murphy type basis  from Definition~\ref{cellmodule}, we have 
 $f_\mathfrak{t}\sigma_k=m_{\mathfrak{t}^{(\lambda,l)}}p_\mathfrak{t}\sigma_k+\sum_{\mathfrak{u}\succ\mathfrak{t}}r_\mathfrak{u}f_\mathfrak{u}\sigma_k$. 
Thus it suffices to show that 
\begin{align}\label{switch:2}
m_{\mathfrak{t}^{(\lambda,l)}}p_\mathfrak{t}\sigma_k=m_{\mathfrak{t}^{(\lambda,l)}}p_\mathfrak{s} +\sum_{\mathfrak{v}\succ\mathfrak{t}}r'_\mathfrak{v}m_\mathfrak{v}.
\end{align}
and $\sigma_k(\mathfrak{s,u})=0$ whenever $\mathfrak{u}\in\hat{A}_{k+1}^{(\lambda,l)}$ and $\mathfrak{u}\succ\mathfrak{t}$. If $\mathfrak{u}\succ\mathfrak{t}$ and $\sigma_k(\mathfrak{s,u})\ne0$, then $\mathfrak{u}\in\lbrace\mathfrak{s,t}\rbrace$ by Lemma~\ref{similar} and Lemma~\ref{involutions}. However, since $\mathfrak{u}\succ\mathfrak{t}\succ\mathfrak{s}$, we have a contradiction. We now verify~\eqref{switch:2}. For this purpose, let $k=2i+1$ and write $\mathfrak{t}=((\lambda^{(0)},l_0),\ldots,(\lambda^{(k+1)},l_{k+1}))$ and $\mathfrak{s}=((\rho^{(0)},r_0),\ldots,(\rho^{(k+1)},r_{k+1}))$. There are two cases to consider. 

{\textsc{Case~1.}} Assume that $\lambda^{(k-2)}=\lambda^{(k-1)}\supsetneq\lambda^{(k)}$ and $\lambda^{(k)}\subsetneq \lambda^{(k+1)}$. To simplify notation, let
\begin{align*}
(\lambda^{(k-2)},\lambda^{(k-1)},\lambda^{(k)},\lambda^{(k+1)})=(\mu,\mu,\nu,\lambda),
\end{align*}
where $\lambda=\nu\cup{\lbrace}(j,\lambda_j){\rbrace}$ and $\mu=\nu\cup{\lbrace}(j',\mu_{j'}){\rbrace}$. Then 
\begin{align*}
(\rho^{(k-2)},\rho^{(k-1)},\rho^{(k)},\rho^{(k+1)})=(\mu,\upsilon,\lambda,\lambda), 
\end{align*}
where $\upsilon=\mu\cup{\lbrace}(j,\upsilon_j) {\rbrace}=\lambda\cup{\lbrace}(j',\upsilon_{j'})$. There is no loss of generality in assuming that $\mathfrak{t}\downarrow_{k-2}$ is maximal in $\hat{A}_{k-2}^{(\mu,l-2)}$. We may therefore take the branching coefficients for $\mathfrak{t}$ to be
\begin{align*}
p_{(\nu,l)\to(\lambda,l)}^{(k+1)}=w_{a_j,i+1} \qquad\text{and}\qquad p_{(\mu,l-1)\to(\nu,l)}^{(k)}=w_{l,i-1}e_{k-1}w_{i-1,a_{j'}}
\sum_{r=0}^{\nu_{j'}} w_{a_{j'},a_{j'}-r}
\end{align*}
where $a_j= l +\sum_{r=1}^j\lambda_r$ and $a_{j'}= l -1+\sum_{r=1}^{j'}\mu_r$, and
\begin{align*}
p_{(\mu,l-1)\to(\nu,l)}^{(k-1)}=w_{l-1,i}. 
\end{align*}
The relation $\sigma_{k}=s_{i}\sigma_{k-1}$ and the braid relation give
\begin{align*}
m_{\mathfrak{t}^{(\lambda,l)}}p_\mathfrak{t}\sigma_{k+1}
&=m_{\mathfrak{t}^{(\lambda,l)}} w_{a_j,i+1}w_{l,i-1}e_{k-1}w_{i-1,a_{j'}}
\bigg(\sum_{r=0}^{\upsilon_{j'}} w_{a_{j'},a_{j'}-r}\bigg)w_{l-1,i+1}\sigma_{k-1}\\
&=m_{\mathfrak{t}^{(\lambda,l)}} w_{l-1,i+1}w_{a_j-1,i}w_{l-1,i-2}e_{k-3}w_{i-2,a_{j'}-1}
\bigg(\sum_{r=0}^{\upsilon_{j'}} w_{a_{j'}-1,a_{j'}-r-1}\bigg)\sigma_{k-1}\\
&=m_{\mathfrak{t}^{(\lambda,l)}} w_{l,i+1}w_{a_j-1,i}w_{l-1,i-2}e_{k-3}w_{i-2,a_{j'}-1}
\bigg(\sum_{r=0}^{\upsilon_{j'}} w_{a_{j'}-1,a_{j'}-r-1}\bigg)\sigma_{k-1}\\
&=m_{\mathfrak{t}^{(\lambda,l)}} p_\mathfrak{u}\sigma_{k-1},
\end{align*}
where 
$\mathfrak{u}\in\hat{A}_{k+1}^{(\lambda,l)}$ is determined by the condition that $\mathfrak{u}\downarrow_{k-3}$ is maximal in $\hat{A}_{k-3}^{(\mu,l-2)}$ and 
\begin{align*}
(\mathfrak{u}^{(k-3)},\mathfrak{u}^{(k-2)},\mathfrak{u}^{(k-1)},\mathfrak{u}^{(k)},\mathfrak{u}^{(k+1)})=
((\mu,l-2),(\nu,l-1),(\lambda,l-1),(\lambda,l-1),(\lambda,l)). 
\end{align*}
Observing that $\mathfrak{s}=\mathfrak{u}\sigma_{k-1}$ and $\mathfrak{u}\succ\mathfrak{s}$ and using the relation~\eqref{b-r-2} and Proposition~\ref{filtration}, we obtain 
\begin{align*}
m_{\mathfrak{t}^{(\lambda,l)}} p_\mathfrak{t}\sigma_{k}=m_{\mathfrak{t}^{(\lambda,l)}} p_\mathfrak{u}\sigma_{k-1}=m_{\mathfrak{t}^{(\lambda,l)}} p_\mathfrak{s}+\sum_{\mathfrak{v}\succ\mathfrak{t}}r'_\mathfrak{v}m_\mathfrak{v},
\end{align*}
as required.

{\textsc{Case~2.}} Assume that $\lambda^{(k-2)}\subsetneq \lambda^{(k-1)}=\lambda^{(k)}\subsetneq \lambda^{(k+1)}$ and, to simplify notation, write
\begin{align*}
(\lambda^{(k-2)} \lambda^{(k-1)},\lambda^{(k)},\lambda^{(k+1)})=(\mu,\upsilon,\upsilon,\lambda), 
\end{align*}
where $\lambda=\upsilon\cup{\lbrace}(j',\lambda_{j'}){\rbrace}$ and $\upsilon=\mu\cup\lbrace(j,\lambda_{j})\rbrace$. Then 
\begin{align*}
(\rho^{(k-2)} \rho^{(k-1)},\rho^{(k)},\rho^{(k+1)})=(\mu,\nu,\nu,\lambda), 
\end{align*}
where $\lambda=\nu\cup\lbrace(j,\lambda_{j}){\rbrace}$ and $\nu=\mu\cup\lbrace(j',\lambda_{j'})\rbrace$. There is no loss of generality in assuming that $\mathfrak{t}\downarrow_{k-2}$ is maximal in $\hat{A}_{k-1}^{(\mu,l)}$. Then $
p_\mathfrak{t}= w_{a_{j'},i+1}w_{a_{j},i}$ and $p_\mathfrak{s}=w_{a_j, i+1}w_{a_{j'}-1,i}$, where  $a_j=l+\sum_{r=1}^{j}\lambda_r$ and $a_{j'}=l+\sum_{r=1}^{j'}\lambda_r$. Using $s_{i}\sigma_{k}=\sigma_{k-1}$ and the braid relation,
\begin{align*}
p_\mathfrak{t}\sigma_k= w_{a_{j'},i+1}w_{a_{j},i}\sigma_k=w_{a_{j'},i+1}w_{a_{j},i+1}\sigma_{k-1} = w_{a_{j},i+1}w_{a_{j'-1},i}\sigma_{k-1}. 
\end{align*}
Therefore, by Lemma~\ref{e-v-b} and Proposition~\ref{filtration},
\[
m_{\mathfrak{t}^{(\lambda,l)}}p_\mathfrak{t}\sigma_{k}=m_{\mathfrak{t}^{(\lambda,l)}}p_\mathfrak{s}\sigma_{k-1}=m_{\mathfrak{t}^{(\lambda,l)}}p_\mathfrak{s}+\sum_{\mathfrak{v}\succ\mathfrak{t}}r'_\mathfrak{v}m_\mathfrak{v}.
\]
This verifies~\eqref{switch:2} and the claim that $\sigma_{k}(\mathfrak{s},\mathfrak{t})=1$ if $\mathfrak{s}=\mathfrak{t}\sigma_{k}$ and $\mathfrak{t}\succ\mathfrak{s}$.  

If $\mathfrak{s}=\mathfrak{t}\sigma_k$ and $\mathfrak{s}\succ\mathfrak{t}$, the structure constant $\sigma_k(\mathfrak{s,t})$ is derived using the relation $\sigma_k^2=1$ and the observation that $c_\mathfrak{s}(k-1)=c_\mathfrak{t}(k+1)$ and $c_\mathfrak{s}(k+1)=c_\mathfrak{t}(k-1)$. 
\end{proof}
%%%%%%%%%%%%%%%%%%%%%%%%%%%%%%%%%%%%%%%%%%%%%%%%%%%%%%%%%%%%%%%%

\section{Central Element Recursions}\label{c-e-r-s}
In this section we obtain partition algebra analogues of the central element recursions obtained by Nazarov~\cite{MR1398116} for the Brauer algebras and explain the relationship between these central element recursions and the seminormal representations of the partition algebras. 

Renormalise the family $\lbrace L_i\mid i\ge 0\rbrace$ by defining 
\begin{align*}
x_{i}=-\textstyle{\frac{z}{2}}+L_{i},\qquad\text{for $i=0,1,\ldots.$}
\end{align*}
For $i=1,2,\ldots,$ we have the relations
\begin{align}
&x_{2i+1}=-x_{2i}e_{2i}-e_{2i}x_{2i}-({\textstyle\frac{z}{2}}+x_{2i-1})e_{2i}+s_ix_{2i-1}s_i+\sigma_{2i},\label{x-1-def}\\
&x_{2i+2}=-\textstyle{\frac{z}{2}}e_{2i}-s_ix_{2i}e_{2i}-e_{2i}x_{2i}s_i+e_{2i}x_{2i}e_{2i+1}e_{2i}+s_ix_{2i}s_i+\sigma_{2i+1}.\label{x-2-def}
\end{align}
For $i=0,1,\ldots,$  define a sequence of central elements $\{x^{(j)}_{i+1} \mid j=0,1,\ldots\} \subseteq A_i$ by 
\begin{align}\label{r-e-d}
x_{i+1}^{(j)}e_{i+1}=e_{i+1}(x_{i+1})^je_{i+1}. 
\end{align}
Let $u$ denote a formal variable. For $i=0,1,\dots,$  define the central elements $W_{2i+1}(u)\in A_{2i}(z)[[u^{-1}]]$, and $W_{2i+2}(u)\in A_{2i+1}(z)[[u^{-1}]]$, by 
\begin{align*}
W_{2i+1}(u)=u^{-1}\sum_{j\ge0} x_{2i+1}^{(j)}u^{-j}\qquad\text{and}\qquad W_{2i+2}(u)=\sum_{j\ge0} x_{2i+2}^{(j)}u^{-j}. 
\end{align*}
We will work with the formal expressions 
\begin{align*}
W_{2i+1}(u)e_{2i+1}=e_{2i+1}\frac{1}{u-x_{2i+1}}e_{2i+1}\qquad\text{and}\qquad{W_{2i+2}(u)}e_{2i+2}=e_{2i+2}\frac{u}{u-x_{2i+2}}e_{2i+2}.
\end{align*}
The next statement is an analogue to~\cite[Proposition~4.2]{MR1398116}. \begin{proposition}\label{n-1}
If $i=1,2,\ldots,$ then 
\begin{align}
\frac{W_{2i+1}(u)+(\frac{z}{2}-u-1)}{W_{2i-1}(u)+(\frac{z}{2}-u-1)}
%&=\frac{(u+x_{2i}-1)(u+x_{2i}+1)}{(u-x_{2i-1}-1)(u-x_{2i-1}+1)}\frac{(u-x_{2i-1})^2}{(u+x_{2i})^2}\notag\\
&=\frac{(u+x_{2i})^2-1}{(u+x_{2i})^2}\frac{(u-x_{2i-1})^2}{(u-x_{2i-1})^2-1},\label{n-1-a} 
\end{align}
and
\begin{align}
\frac{W_{2i+2}(u)}{W_{2i}(u)}
%&=\frac{(u+x_{2i+1}-1)(u+x_{2i+1}+1)}{(u-x_{2i}-1)(u-x_{2i}+1)}\frac{(u-x_{2i})^2}{(u+x_{2i+1})^2}\notag\\
&=\frac{(u+x_{2i+1})^2-1}{(u+x_{2i+1})^2}\frac{(u-x_{2i})^2}{(u-x_{2i})^2-1}.\label{n-1-b}
\end{align}
\end{proposition}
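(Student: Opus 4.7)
The plan is to follow the template established by Nazarov~\cite{Na:1996} for the Brauer algebras and adapted by Beliakova--Blanchet~\cite{BB:2001} for the BMW algebras: derive both recursions by manipulating the formal power series defining $W_{i+\half}(u)$ and $W_{i+1}(u)$ directly, using the recursive formulae~\eqref{x-1-def} and~\eqref{x-2-def} together with the basic-construction identities collected in Proposition~\ref{a-c-r}.

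For identity~\eqref{n-1-b}, I would begin with
\[
W_{i+1}(u)\, p_{i+\thalf} \;=\; p_{i+\thalf}\, \frac{u}{u - x_{i+1}}\, p_{i+\thalf},
\]
expanded as a formal geometric series in $u^{-1}$, and substitute the recursion~\eqref{x-2-def} for $x_{i+1}$ term by term. The contributions split as follows: the conjugate $s_i x_i s_i$ (absent in~\eqref{x-2-def} but its analogue $s_i x_{i-1} s_i$ is present) and the terms $s_i x_i p_{i+\half}$, $p_{i+\half} x_i s_i$ contribute the factors $(u \pm x_i)^{\pm 1}$ when sandwiched by $p_{i+\thalf}$; the central $\sigma_{i+1}$ term collapses via item~\eqref{a-c-r-17}, namely $p_{i+\thalf}\sigma_{i+1}p_{i+\thalf} = p_{i+\half}p_{i+\thalf}$; and the mixed term $p_{i+\half}x_i p_{i+1}p_{i+\half}$ is controlled by item~\eqref{a-c-r-11} of Proposition~\ref{a-c-r}, which after the renormalization $x_j = L_j - z/2$ reads $(x_{i+\half}+x_{i+1})p_{i+1}=0$. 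Re-summing and invoking the basic construction identity $W_i(u)\, p_{i+\thalf} = p_{i+\thalf}\, p_{i+\half}\cdot u(u-x_i)^{-1}\cdot p_{i+\half}\, p_{i+\thalf}$, the right-hand side of~\eqref{n-1-b} appears as the ratio of quadratics between the numerator and denominator of the collapsed power series.

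The proof of~\eqref{n-1-a} proceeds in parallel, starting from $p_{i+1}(u-x_{i+\half})^{-1}p_{i+1}$, using~\eqref{x-1-def}, and invoking items~\eqref{a-c-r-12},~\eqref{a-c-r-15} and~\eqref{a-c-r-16} in place of items~\eqref{a-c-r-11},~\eqref{a-c-r-17}. The additive shift $\tfrac{z}{2}-u-1$ appearing in the numerator and denominator on the left originates from item~\eqref{a-c-r-16}, $p_{i+1}\sigma_{i+\half}p_{i+1}=(z-L_{i-\half})p_{i+1}$, together with the unit $+1$ on the right of~\eqref{x-1-def}, once the constant $-z/2$ converting $L$ into $x$ has been absorbed.

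The main obstacle will be the combinatorial bookkeeping required to show that after substituting~\eqref{x-1-def} or~\eqref{x-2-def} the various contributions from $\sigma_{i+\half}$, $\sigma_{i+1}$, the cross terms and the idempotents assemble into exactly the displayed rational functions. This reduces to a partial-fraction identity which, once the sandwich by $p_{i+\thalf}$ or $p_{i+1}$ is unraveled, is essentially the same identity that drives Nazarov's~\cite[Prop.~4.2]{Na:1996} and~\cite[(12)]{BB:2001}; the novel point for the partition algebra is the simultaneous presence of the two kinds of idempotents $p_j$ and $p_{j+\half}$, which forces a careful alternation of the contraction identities~\eqref{a-c-r-11}--\eqref{a-c-r-12} and~\eqref{a-c-r-15}--\eqref{a-c-r-17} at each step.
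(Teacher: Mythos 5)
Your plan correctly identifies the template (Nazarov's resolvent recursion) and the right raw ingredients: the recursions~\eqref{x-1-def}--\eqref{x-2-def}, the contraction identities~\eqref{a-c-r-11}, \eqref{a-c-r-12}, \eqref{a-c-r-15}--\eqref{a-c-r-17}, and the origin of the shift $\tfrac{z}{2}-u-1$. But the mechanism you describe --- expand $W_{i+1}(u)p_{i+\thalf}$ as a geometric series in $u^{-1}$ and ``substitute the recursion for $x_{i+1}$ term by term'' --- does not work as stated. The powers $(x_{i+1})^j$ cannot be usefully expanded by iterated substitution, because $x_{i+1}$ does not commute with the summands of its own defining recursion; the resulting non-commutative expansion has no tractable re-summation. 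The paper avoids this entirely: it rewrites~\eqref{x-1-def} as a \emph{linear} intertwining identity $s_i(u-x_{i+\half})=\sigma_{i+1}p_ip_{i+\half}+p_{i+\half}x_i+x_{i-\half}p_{i+\half}+(u-x_{i-\half})s_i-\sigma_{i+1}$, multiplies once on the left by $(u-x_{i-\half})^{-1}$ and on the right by $(u-x_{i+\half})^{-1}$, and thereby obtains a single closed identity for $(u-x_{i-\half})^{-1}\sigma_{i+\half}$ (equation~\eqref{s-1-a}, and its analogue~\eqref{t-a-a} for the integer case). All subsequent work consists of sandwiching that one identity with $p_{i+1}$, with $p_i$ followed by $*$, and with $\sigma_{i+\half}$ on the right.

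The second missing ingredient is the identity that actually closes the recursion, namely
\begin{align*}
p_{i+1}\,\sigma_{i+\half}\,\frac{1}{u-x_{i-\half}}\,\sigma_{i+\half}\,p_{i+1}
= s_ip_i\,\sigma_{i+1}\,\frac{1}{u-x_{i-\half}}\,\sigma_{i+1}\,p_is_i
= W_{i-\half}(u)\,p_{i+1},
\end{align*}
and its counterpart $p_{i+\thalf}\sigma_{i+1}\frac{1}{u-x_i}\sigma_{i+1}p_{i+\thalf}=p_{i+\thalf}\frac{W_i(u)}{u}$, obtained from $\sigma_{i+\half}\sigma_{i+1}=s_i$ and the commutation of $\sigma_{i+1}$ with $\mathcal{A}_{i-\half}(z)$. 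Without these, $W_{i-\half}(u)$ (resp.\ $W_i(u)$) never appears on the left-hand side and there is no recursion to extract; what you label ``combinatorial bookkeeping'' and a ``partial-fraction identity'' is in fact this pair of conjugation identities combined with the evaluations $p_{i+1}\sigma_{i+\half}p_{i+1}=(\tfrac{z}{2}-x_{i-\half})p_{i+1}$ and $\sigma_{i+\half}p_i=(\tfrac{z}{2}+x_i)p_{i+\half}p_i$. So the proposal names the correct toolbox but omits the two steps on which the proof actually turns.
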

\begin{proof}
We prove~\eqref{n-1-a}. From the relation~\eqref{x-1-def}, 
\begin{align*}
s_i(u-x_{2i+1})&={\textstyle{\frac{z}{2}}}e_{2i}+s_ix_{2i}e_{2i}+e_{2i}x_{2i}+x_{2i-1}e_{2i}+(u-x_{2i-1})s_i-\sigma_{2i+1}\\
&=\sigma_{2i+1}e_{2i-1}e_{2i}+e_{2i}x_{2i}+x_{2i-1}e_{2i}+(u-x_{2i-1})s_i-\sigma_{2i+1},
\end{align*}
and 
\begin{align*}
\frac{1}{u-x_{2i-1}}s_i
&=\frac{1}{u-x_{2i-1}}\sigma_{2i+1}e_{2i-1}e_{2i}\frac{1}{u-x_{2i+1}}
+\frac{1}{u-x_{2i-1}}e_{2i}x_{2i}\frac{1}{u-x_{2i+1}}\\
&\quad-e_{2i}\frac{1}{u-x_{2i+1}}
+\frac{u}{u-x_{2i-1}}e_{2i}\frac{1}{u-x_{2i+1}}+s_i\frac{1}{u-x_{2i+1}}\notag\\
&\quad-\sigma_{2i+1}\frac{1}{(u-x_{2i+1})(u-x_{2i-1})}.\notag
\end{align*}
Using the fact that $e_{2i}(u-x_{2i+1})=e_{2i}(u+x_{2i})$, the previous expression gives 
\begin{align*}
\frac{1}{u-x_{2i-1}}s_i&=\frac{1}{u-x_{2i-1}}\sigma_{2i+1}e_{2i-1}e_{2i}\frac{1}{u-x_{2i+1}}+\frac{1}{u-x_{2i-1}}e_{2i} - e_{2i}\frac{1}{u+x_{2i}}\\
&\quad+s_i\frac{1}{u-x_{2i+1}}-\sigma_{2i+1}\frac{1}{(u-x_{2i+1})(u-x_{2i-1})}.\notag
\end{align*}
Since $\sigma_{2i+1}$ commutes with $x_{2i-1}$, multiplying both sides of the last expression by $\sigma_{2i+1}$ on the left, we obtain 
\begin{equation}\label{s-1-a}
\begin{split}
\frac{1}{u-x_{2i-1}}\sigma_{2i}&=\frac{1}{u-x_{2i-1}}e_{2i-1}e_{2i}\frac{1}{u-x_{2i+1}}+\frac{1}{u-x_{2i-1}}e_{2i} - e_{2i}\frac{1}{u+x_{2i}}\\
&\quad+\sigma_{2i}\frac{1}{u-x_{2i+1}}-\frac{1}{(u-x_{2i+1})(u-x_{2i-1})}
\end{split}
\end{equation}
Multiplying each side of~\eqref{s-1-a} by $e_{2i+1}$ on the right and on the left,
\begin{equation}\label{s-1-b}
\begin{split}
e_{2i+1}\sigma_{2i}\frac{1}{u-x_{2i+1}}e_{2i+1}
&=\frac{1}{u-x_{2i-1}}e_{2i+1}\sigma_{2i}e_{2i+1}
-\frac{1}{u-x_{2i-1}}e_{2i-1}\frac{1}{u+x_{2i}}e_{2i+1}\\
&\quad+\frac{1}{u+x_{2i}}e_{2i+1}-\frac{1}{u-x_{2i-1}}e_{2i+1}
+\frac{1}{u-x_{2i-1}}{W_{2i+1}(u)}e_{2i+1}.
\end{split}
\end{equation}
Using Proposition~\ref{a-c-r}\eqref{a-c-r-15}, we obtain the relation $e_{2i+1}\sigma_{2i}e_{2i+1}=(\frac{z}{2}-x_{2i-1})e_{2i+1}$ which, substituted into~\eqref{s-1-b}, gives 
\begin{multline}
e_{2i+1}\sigma_{2i}\frac{1}{u-x_{2i+1}}e_{2i+1}
=({\textstyle\frac{z}{2}}-u-1)\frac{1}{u-x_{2i-1}}e_{2i+1}+e_{2i+1}-\frac{1}{u-x_{2i-1}}e_{2i+1}\\
-\frac{1}{u-x_{2i-1}}e_{2i-1}\frac{1}{u+x_{2i}}e_{2i+1}
+\frac{1}{u+x_{2i}}e_{2i+1}
+\frac{1}{u-x_{2i-1}}{W_{2i+1}(u)}e_{2i+1}.\label{s-1-c}
\end{multline}
Multiplying each side of~\eqref{s-1-a} by $e_{2i-1}$ on the left, and applying the algebra anti-involution $*$ to the result,
\begin{align*}
\sigma_{2i}\frac{1}{u-x_{2i-1}}e_{2i-1}
&=\frac{1}{u+x_{2i}}e_{2i}e_{2i-1}W_{2i-1}(u)
+\frac{1}{u-x_{2i-1}}e_{2i}e_{2i-1}-\frac{1}{u+x_{2i}}e_{2i}e_{2i-1}\\
&\quad+\frac{1}{u-x_{2i+1}}\sigma_{2i}e_{2i-1}
-\frac{1}{(u+x_{2i})(u-x_{2i+1})}e_{2i-1}.
\end{align*}
Making the substitution $\sigma_{2i}e_{2i-1}=(\frac{z}{2}+x_{2i})e_{2i}e_{2i-1}$ in the last expression gives
\begin{align*}
\sigma_{2i}\frac{1}{u-x_{2i-1}}e_{2i-1}
&=\frac{1}{u+x_{2i}}e_{2i}e_{2i-1}W_{2i-1}(u)
+\frac{1}{u-x_{2i-1}}e_{2i}e_{2i-1}+e_{2i}e_{2i-1}\\
&\quad+({\textstyle\frac{z}{2}}-u-1)\frac{1}{u+x_{2i}}e_{2i}e_{2i-1}
-\frac{1}{(u+x_{2i})(u-x_{2i+1})}e_{2i-1}.\notag
\end{align*}
Therefore, 
\begin{equation}
\begin{split}\label{s-1-d}
&e_{2i+1}\sigma_{2i}\frac{1}{u-x_{2i-1}}e_{2i-1}e_{2i}\frac{1}{u+x_{2i}}e_{2i+1}=\frac{1}{(u+x_{2i})^2}W_{2i-1}(u)e_{2i+1}
+\frac{1}{u+x_{2i}}e_{2i+1}\\
&\qquad+({\textstyle\frac{z}{2}}-u-1)\frac{1}{(u+x_{2i})^2}e_{2i+1}
+\frac{1}{(u-x_{2i-1})(u+x_{2i})}e_{2i+1}\\
&\qquad\qquad-\frac{1}{u-x_{2i-1}}e_{2i-1}\frac{1}{(u+x_{2i})^2}e_{2i+1}.
\end{split}
\end{equation}
Now, 
\begin{equation}\label{s-1-e}
\begin{split}
e_{2i+1}\sigma_{2i}\frac{1}{u-x_{2i-1}}\sigma_{2i}e_{2i+1}
&=e_{2i+1}\sigma_{2i}\frac{1}{u-x_{2i-1}}e_{2i-1}e_{2i}\frac{1}{u-x_{2i+1}}e_{2i+1}\\
&\quad +\frac{1}{u-x_{2i-1}}e_{2i+1}
-\frac{1}{u+x_{2i}}e_{2i+1}+W_{2i+1}(u)e_{2i+1}\\
&\quad-e_{2i+1}\sigma_{2i}\frac{1}{u-x_{2i+1}}e_{2i+1}\frac{1}{u-x_{2i-1}}.
\end{split}
\end{equation}
Using the relation
\begin{align*}
e_{2i+1}\sigma_{2i}\frac{1}{u-x_{2i-1}}\sigma_{2i}e_{2i+1}
=s_i e_{2i-1}\sigma_{2i+1}\frac{1}{u-x_{2i-1}}\sigma_{2i+1}e_{2i-1}s_i={W_{2i-1}(u)}e_{2i+1},
\end{align*}
and substituting~\eqref{s-1-c} and~\eqref{s-1-d} into~\eqref{s-1-e}, we obtain
\begin{align*}
W_{2i-1}(u)e_{2i+1}&=\frac{1}{(u+x_{2i})^2}W_{2i-1}(u)e_{2i+1}+({\textstyle\frac{z}{2}}-u-1)\frac{1}{(u+x_{2i})^2}e_{2i+1}+W_{2i+1}(u)e_{2i+1}\\
&\quad-\frac{1}{(u-x_{2i-1})^2}e_{2i+1}-({\textstyle\frac{z}{2}}-u-1)\frac{1}{(u-x_{2i-1})^2}e_{2i+1}
\end{align*}
or 
\begin{multline*}
\frac{(u+x_{2i})^2-1}{(u+x_{2i})^2}e_{2i+1}W_{2i-1}(u)-({\textstyle\frac{z}{2}}-u-1)\frac{1}{(u+x_{2i})^2}e_{2i+1}\\
=\frac{(u-x_{2i-1})^2-1}{(u-x_{2i-1})^2}e_{2i+1}W_{2i+1}(u)-({\textstyle\frac{z}{2}}-u-1)\frac{1}{(u-x_{2i-1})^2}e_{2i+1},
\end{multline*}
from which the relation~\eqref{n-1-a} follows.

For the proof of\eqref{n-1-b}, we use the the relation~\eqref{x-2-def} to write
\begin{align}\label{t-a-0}
(u-x_{2i+2})s_i={\textstyle\frac{z}{2}}e_{2i}+s_ix_{2i}e_{2i}+e_{2i}x_{2i}-e_{2i}x_{2i}e_{2i+1}e_{2i}+s_i(u-x_{2i})-\sigma_{2i}.
\end{align}
Since $(x_{2i+2})^je_{2i}=e_{2i}(x_{2i})^je_{2i+1}e_{2i}$ for $j=0,1,\ldots,$ we obtain
\begin{align*}
\frac{1}{u-x_{2i+2}}e_{2i}=e_{2i}\frac{1}{u-x_{2i}}e_{2i+1}e_{2i}, 
\end{align*}
which together with~\eqref{t-a-0}, gives 
\begin{align*}
s_i\frac{1}{u-x_{2i}}&=\frac{z}{2}\frac{1}{u-x_{2i+2}}e_{2i}\frac{1}{u-x_{2i}}+\frac{1}{u-x_{2i+2}}s_ix_{2i}e_{2i}\frac{1}{u-x_{2i}}-e_{2i}\frac{1}{u-x_{2i}}e_{2i+1}e_{2i}\\
&\quad+e_{2i}\frac{1}{u-x_{2i}}+\frac{1}{u-x_{2i+2}}s_i-\sigma_{2i}\frac{1}{(u-x_{2i})(u-x_{2i+2})}\\
&=\frac{1}{u-x_{2i+2}}\sigma_{2i}e_{2i+1}e_{2i}\frac{1}{u-x_{2i}}-e_{2i}\frac{1}{u-x_{2i}}e_{2i+1}e_{2i}
+e_{2i}\frac{1}{u-x_{2i}}\\
&\quad+\frac{1}{u-x_{2i+2}}s_i-\sigma_{2i}\frac{1}{(u-x_{2i})(u-x_{2i+2})}. 
\end{align*}
Multiplying both sides of the above expression by $\sigma_{2i}$ on the left, 
\begin{equation}\label{t-a-a}
\begin{split}
\sigma_{2i+1}\frac{1}{u-x_{2i}}&=\frac{1}{u-x_{2i+2}}e_{2i+1}e_{2i}\frac{1}{u-x_{2i}}-e_{2i}\frac{1}{u-x_{2i}}e_{2i+1}e_{2i}
+e_{2i}\frac{1}{u-x_{2i}}\\
&\quad+\frac{1}{u-x_{2i+2}}\sigma_{2i+1}-\frac{1}{(u-x_{2i})(u-x_{2i+2})}, 
\end{split}
\end{equation}
and multiplying both sides of the expression~\eqref{t-a-a} by $e_{2i}$ on the right, and then applying the anti-involution $*$ to the result,
\begin{equation}\label{t-a-b}
\begin{split}
e_{2i}\frac{1}{u-x_{2i}}\sigma_{2i+1}&=
\frac{W_{2i}(u)}{u}e_{2i}e_{2i+1}\frac{1}{u-x_{2i+2}}
+\frac{W_{2i}(u)}{u}e_{2i}\\
&\quad-e_{2i}\frac{1}{u-x_{2i}}e_{2i+1}e_{2i}\frac{1}{u-x_{2i}}.
\end{split}
\end{equation}
Multiplying both sides of the expression~\eqref{t-a-a} by $e_{2i+2}$ on the right and on the left, and using Proposition~\ref{a-c-r}~\eqref{a-c-r-17} together with the fact that $e_{2i+1}(u+x_{2i+1})=e_{2i+1}(u-x_{2i+2})$, we obtain
\begin{equation}\label{t-a-r}
\begin{split}
e_{2i+2}\frac{1}{u-x_{2i+2}}\sigma_{2i+1}e_{2i+2}=&
e_{2i}e_{2i+2}\frac{W_{2i}(u)}{u}-\frac{1}{u+x_{2i+1}}e_{2i}e_{2i+2}\frac{1}{u-x_{2i}}\\
&+\frac{1}{u-x_{2i}}e_{2i+2}\frac{W_{2i+2}(u)}{u}.
\end{split}
\end{equation}
Multiplying~\eqref{t-a-a} on the right by $\sigma_{2i+1}$ and substituting for the term appearing on the left hand side of~\eqref{t-a-b}, 
\begin{equation}\label{t-a-q}
\begin{split}
\sigma_{2i+1}\frac{1}{u-x_{2i}}\sigma_{2i+1}&= \frac{W_{2i}(u)}{u}\frac{1}{u+x_{2i+1}}e_{2i+1}\frac{1}{u+x_{2i+1}}
+\frac{W_{2i}(u)}{u}\frac{1}{u+x_{2i+1}}e_{2i+1}e_{2i} \\
&\quad-\frac{1}{(u-x_{2i})(u+x_{2i+1})}e_{2i+1}e_{2i}\frac{1}{u-x_{2i}}
- e_{2i}\frac{1}{u-x_{2i}}e_{2i+1}e_{2i}\\
&\quad+\frac{W_{2i}(u)}{u}e_{2i}e_{2i+1}\frac{1}{u+x_{2i+1}}-\frac{1}{(u-x_{2i})(u-x_{2i+2})}\sigma_{2i+1}\\ 
&\quad-e_{2i}\frac{1}{u-x_{2i}}e_{2i+1}e_{2i}\frac{1}{u-x_{2i}} +\frac{1}{u-x_{2i+2}}+\frac{W_{2i}(u)}{u}e_{2i}.
\end{split}
\end{equation}
Since
\begin{align*}
e_{2i+2}\sigma_{2i+1}\frac{1}{u-x_{2i}}\sigma_{2i+1}e_{2i+2}&
=e_{2i+2}\sigma_{2i+1}\sigma_{2i+2}\frac{1}{u-x_{2i}}\sigma_{2i+2}\sigma_{2i+1}e_{2i+2}\\
&=\sigma_{2i+1}\sigma_{2i+2}e_{2i}\frac{1}{u-x_{2i}}e_{2i}\sigma_{2i+2}\sigma_{2i+1}\\
&=\sigma_{2i+1}\sigma_{2i+2}e_{2i}\sigma_{2i+2}\sigma_{2i+1}\frac{W_{2i}(u)}{u}\\ 
&=e_{2i+2}\frac{W_{2i}(u)}{u},
\end{align*}
multiplying both sides of~\eqref{t-a-q} by $e_{2i+2}$ on the left and on the right, and substituting for the term appearing on the left hand side of~\eqref{t-a-r} gives
\begin{align*}
e_{2i+2}\frac{W_{2i}(u)}{u}&=
e_{2i+2}\bigg(\frac{W_{2i}(u)}{u}\frac{1}{(u+x_{2i+1})^2}
+\frac{W_{2i}(u)}{u}\frac{1}{u+x_{2i+1}}e_{2i}\\
&\qquad-\frac{1}{(u-x_{2i})(u+x_{2i+1})}e_{2i}\frac{1}{u-x_{2i}}-\frac{W_{2i}(u)}{u}e_{2i}
+\frac{W_{2i}(u)}{u}e_{2i}\frac{1}{u+x_{2i+1}}\\
&\qquad +\frac{W_{2i}(u)}{u}e_{2i}-\frac{W_{2i}(u)}{u}e_{2i}\frac{1}{u-x_{2i}}+\frac{W_{2i+2}(u)}{u}
-\frac{1}{u-x_{2i}}\frac{W_{2i}(u)}{u}e_{2i}\\
&\qquad +\frac{1}{(u-x_{2i})(u+x_{2i+1})}e_{2i}\frac{1}{u-x_{2i}}
-\frac{1}{(u-x_{2i})^2}\frac{W_{2i+2}(u)}{u}
\bigg).
\end{align*}
Since $(u-x_{2i})e_{2i}=(u+x_{2i+1})e_{2i}$, we now obtain
\begin{align*}
\frac{W_{2i}(u)}{u}
-\frac{W_{2i}(u)}{u}\frac{1}{(u+x_{2i+1})^2}=
\frac{W_{2i+2}(u)}{u}
-\frac{W_{2i+2}(u)}{u}\frac{1}{(u-x_{2i})^2},
\end{align*}
and the statement~\eqref{n-1-b} follows. 
\end{proof}
As an application of the recursions~\eqref{n-1-a} and~\eqref{n-1-b}, the seminormal matrix entries of the contractions $e_{2i-1}$ and $e_{2i}$ can be computed independently of any formula for the dimensions of the irreducible representations of the symmetric group.

Determine a series $Z_{2i+1}(u)\in A_{i}(z)[[u^{-1}]]$ by the recursion~\eqref{n-1-a} and 
\begin{align}\label{n-2-a}
Z_{2i+1}(u)=W_{2i+1}(u)+(\textstyle\frac{z}{2}-u-1)
&&\text{and}&&Z_1(u)=-\frac{(u+1+\textstyle\frac{z}{2})(u-\textstyle\frac{z}{2})}{(u+\textstyle\frac{z}{2})},
\end{align}
and a series $Z_{2i+2}(u)\in A_{i+\half}(z)[[u^{-1}]]$ by the recursion~\eqref{n-1-b}, and
\begin{align}\label{n-2-b}
Z_{2i+2}(u)=\frac{W_{2i+2}(u)}{u}
&&\text{and}&&
{Z_2(u)}=\frac{(u+1-\textstyle\frac{z}{2})}{(u+\textstyle\frac{z}{2})(u-\textstyle\frac{z}{2})}.
\end{align}
For $(\lambda,l)\in\hat{A}_{k}$, let $Z_{k+1}(u,\lambda)$ denote the scalar by which $Z_{k+1}(u)$ acts on the $A_{k}(z)[[u^{-1}]]$-module $\Delta^{(\lambda,l)}_{k,\mathbb{F}}$. 
If $\mathfrak{t}\in\hat{A}_k^{(\lambda,l)}$ and $i=1,\ldots,k$, let 
\begin{align*}
x_\mathfrak{t}(i)=-{\textstyle\frac{z}{2}}+c_\mathfrak{t}(i)
\end{align*}
denote the eigenvalue by which $x_i$ acts on $f_\mathfrak{t}$. The next statement is a counterpart to~\cite[(3.6)]{MR1398116} and~\cite[Lemma~7.4]{MR1866492}. 
\begin{lemma}\label{r-e-s-i}
Assume that $(\lambda, l )\in\hat{A}_{k+1}$ and $\mathfrak{t}=((\lambda^{(0)},l_0),\ldots,(\lambda^{(k+1)},l_{k+1}))$, where $\lambda^{(k-1)}=\lambda^{(k+1)}$. Then 
\begin{align}\label{r-e-s-i-1} 
e_{k}(\mathfrak{t},\mathfrak{t})=\Res_{u=x_\mathfrak{t}(k)}Z_{k}(u,\lambda).
\end{align}
\end{lemma}
\begin{proof}
Let $(\lambda, l )\in\hat{A}_{k+1}$ and $\mathfrak{t}\in\hat{A}_{k+1}^{(\lambda,l)}$ be as in the statement of the lemma and assume that $k$ is even. Then, using Lemma~\ref{s-q},
\begin{align*}
f_\mathfrak{t}\frac{W_{k}(u)}{u}e_{k}
&=f_\mathfrak{t}e_{k}\frac{1}{u-x_{k}}e_{k} 
=\sum_{\mathfrak{s}\stackrel{k}{\sim}\mathfrak{t}}
\frac{e_{k}(\mathfrak{s},\mathfrak{t})}{u-x_\mathfrak{s}(k)}f_\mathfrak{s}e_{k}
=\sum_{\mathfrak{s}\stackrel{k}{\sim}\mathfrak{t}}
\sum_{\mathfrak{v}\stackrel{k}{\sim}\mathfrak{s}}
\frac{e_{k}(\mathfrak{v},\mathfrak{s})e_{k}(\mathfrak{s},\mathfrak{t})}{u-x_\mathfrak{s}(k)}f_\mathfrak{v}\\
&=\sum_{\mathfrak{s}\stackrel{k}{\sim}\mathfrak{t}}\,
\sum_{\mathfrak{v}\stackrel{k}{\sim}\mathfrak{s}}
\frac{e_{k}(\mathfrak{s},\mathfrak{s})e_{k}(\mathfrak{v},\mathfrak{t})}{u-x_\mathfrak{s}(k)}f_\mathfrak{v}
=\sum_{\mathfrak{s}\stackrel{k}{\sim}\mathfrak{t}}
\frac{e_{k}(\mathfrak{s},\mathfrak{s})}{u-x_\mathfrak{s}(k)}f_\mathfrak{t}e_{k}. 
\end{align*}
Since $e_{k}(\mathfrak{t},\mathfrak{t})\ne0$ the relation~\eqref{r-e-s-i-1} now follows. The proof when $k$ is odd is similar.
\end{proof}
The formulae given in the next proposition are proved by induction. 
\begin{proposition}\label{c-f-r}
(1) If $(\lambda,l)\in\hat{A}_{2k}$, then 
\begin{align}
Z_{2k+1}(u,\lambda)&=-
{\displaystyle\frac{\big(u-|\lambda|+1+\frac{z}{2}\big)}{\big(u-|\lambda|+\frac{z}{2}\big)}
\frac{\prod_{\beta\in A(\lambda)}\big(u+c(\beta)-\frac{z}{2}\big)}{\prod_{\beta\in R(\lambda)}\big(u+c(\beta)-\frac{z}{2}\big)}}.\label{c-f-r-2}
\end{align}
\item[(2)]If $(\lambda,l)\in\hat{A}_{2k+1}$, then
\begin{align}
Z_{2k+2}(u,\lambda)&=
{\displaystyle
\frac{\big(u+|\lambda|+1-\frac{z}{2}\big)}{\big(u+|\lambda|-\frac{z}{2}\big)}
\frac{\prod_{\beta\in R(\lambda)}\big(u-c(\beta)+\frac{z}{2}\big)}{\prod_{\beta\in A(\lambda)}\big(u-c(\beta)+\frac{z}{2}\big)}.}\label{c-f-r-1}
\end{align}
\end{proposition}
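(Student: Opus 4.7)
The plan is to prove both~\eqref{c-f-r-2} and~\eqref{c-f-r-1} by induction on $|\mu|$, using the Nazarov-type recursions~\eqref{n-1-a} and~\eqref{n-1-b} of Proposition~\ref{n-1}. The base case $\mu=\emptyset$ is read off directly from the prescribed initial values~\eqref{n-2-a} and~\eqref{n-2-b}.

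For the inductive step of~\eqref{c-f-r-2} with $|\mu|\ge 1$, I would pick any removable box $b\in R(\mu)$ of content $c=c(b)$, set $\lambda=\mu\setminus\{b\}$, take $k$ large enough that $(\mu,m)\in\hat{A}_k$ for some $m$, and construct a path $\mathfrak{t}\in\hat{A}_k^{(\mu,m)}$ with $\mathfrak{t}^{(k-1)}=\mathfrak{t}^{(k-\half)}=\lambda$ and $\mathfrak{t}^{(k)}=\mu$ (propagation from level $k-1$ to level $k-\half$, then addition of the box $b$ at level $k$). Since $Q_{k+\half}(u)$ and $Q_{k-\half}(u)$ are central in $\mathcal{A}_k(z)$ and $\mathcal{A}_{k-1}(z)$ respectively, they act on the seminormal eigenvector $f_\mathfrak{t}$ by the scalars $Q_{k+\half}(u,\mu)$ and $Q_{k-\half}(u,\lambda)$, while by Proposition~\ref{s-a-c} the renormalised Jucys--Murphy elements $x_{k-\half}$ and $x_k$ act on $f_\mathfrak{t}$ by the eigenvalues $-\frac{z}{2}+|\lambda|$ and $-\frac{z}{2}+c$ respectively. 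Evaluating~\eqref{n-1-a} on $f_\mathfrak{t}$ therefore yields an explicit rational expression for the ratio $Q_{k+\half}(u,\mu)/Q_{k-\half}(u,\lambda)$.

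Substituting the inductive formula for $Q_{k-\half}(u,\lambda)$ into this ratio and writing $v=u-\frac{z}{2}$, the outer linear factors of shape $(u-|\gamma|+1+\frac{z}{2})/(u-|\gamma|+\frac{z}{2})$ cancel cleanly against the rational function produced by~\eqref{n-1-a}, and the inductive step reduces to the purely combinatorial identity
\begin{align*}
\frac{\prod_{a\in A(\mu)}(v+c(a))}{\prod_{b'\in R(\mu)}(v+c(b'))}\cdot\frac{\prod_{b'\in R(\lambda)}(v+c(b'))}{\prod_{a\in A(\lambda)}(v+c(a))}=\frac{(v+c-1)(v+c+1)}{(v+c)^2}.
\end{align*}
I would verify this identity by a local content analysis on the three diagonals of contents $c-1,c,c+1$: at content $c$ the addable node of $\lambda$ becomes the removable node of $\mu$, contributing the factor $(v+c)^{-2}$; while at each of the neighbouring contents $c\pm 1$ exactly one of two mutually exclusive changes occurs --- either a removable node of $\lambda$ disappears in $\mu$, or a new addable node of $\mu$ appears --- and in either case the net contribution is the factor $(v+c\pm 1)$. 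The mutual exclusion is seen from the fact that the diagonal at content $c-1$ (resp.\ $c+1$) hosts an inner corner of $\lambda$ exactly when the adjacent row has strictly smaller (resp.\ strictly larger) neighbour, and hosts a new outer corner of $\mu$ in precisely the complementary situation. I expect this content-tracking case analysis to be the main --- though elementary --- technical step of the proof.

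The proof of~\eqref{c-f-r-1} runs in parallel: using the recursion~\eqref{n-1-b} on an eigenvector in $A_{k+\half}^{(\mu,m)}$ along a path with $\mathfrak{t}^{(k-\half)}=\lambda$ and $\mathfrak{t}^{(k)}=\mathfrak{t}^{(k+\half)}=\mu$, and substituting $v=u+\frac{z}{2}$, the same cancellation reduces the inductive step to the analogous identity with $(v+c(\cdot))$ replaced throughout by $(v-c(\cdot))$, which is verified by an identical local content argument.
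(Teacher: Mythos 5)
Your proposal is correct and follows essentially the same route as the paper: evaluate the recursion~\eqref{n-1-a} (resp.~\eqref{n-1-b}) on a seminormal eigenvector whose final step adjoins the box $\alpha$, substitute the Jucys--Murphy eigenvalues, cancel the outer linear factors against $(u-|\mu|+1+\frac{z}{2})/(u-|\mu|+\frac{z}{2})$, and finish with exactly the local addable/removable-node analysis on the diagonals of content $c(\alpha)\pm1$ that the paper carries out with its nodes $a_1,\dots,a_4$. The only cosmetic difference is that the paper inducts along the maximal path $\mathfrak{t}^{\mu}$ (disposing of the $m$ contraction steps first, where the recursion ratio is $1$), whereas you induct on $|\mu|$ with an arbitrary removable corner; the substance is identical.
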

\section{Gram Determinants}\label{det-proof}
In this section we verify the formulae for the branching factors of the inner product on cell modules given in Proposition~\ref{branching}. We will use the following notation from~\cite[Theorem~4.2]{EG:2012}.

If $(\lambda,l)\in\hat{A}_{2i-1}$ and $(\mu,m)\in\hat{A}_{2i}$, where $(\lambda,l)\to(\mu,m)$ in $\hat{A}$, define 
\begin{align*}
u_{(\lambda,l)\to(\mu,m)}^{(2i)}=
\begin{cases}
\bar{u}^{\,(2i-2l)}_{\mu\to\lambda}e_{2i-1}^{(m)},&\text{if $\lambda\subsetneq\mu$ and $l=m$,}\medskip\\
e_{2i-1}^{(m)},&\text{if $\lambda=\mu$ and $l=m-1$.}
\end{cases}
\end{align*}
Similarly, if $(\lambda,l)\in\hat{A}_{2i}$ and $(\mu,m)\in\hat{A}_{2i+1}$, where $(\lambda,l)\to(\mu,m)$ in $\hat{A}$, define
\begin{align*}
u_{(\lambda,l)\to(\mu,m)}^{(2i+1)}=
\begin{cases}
e_{2i}^{(m)},&\text{if $\lambda=\mu$ and $l=m$, }\medskip\\
\bar{d}_{\mu\to\lambda}^{\,(2i-2l)}e_{2i}^{(m)},&\text{if $\lambda\subsetneq\mu$ and $l=m-1$.}
\end{cases}
\end{align*}
\begin{lemma}\label{observation:0}
Assume that $(\mu,m)\in\hat{A}_{k+1}$ and $\mathfrak{s},\mathfrak{u}\in\hat{A}_{k+1}^{(\mu,m)}$, where $\Shape(\mathfrak{s}\downarrow_{k})= \Shape(\mathfrak{u}\downarrow_{k})$. Let $\mathfrak{t}=\mathfrak{s}\downarrow_k$ and $\mathfrak{v}=\mathfrak{u}\downarrow_k$. Then
\begin{align*}
\frac{\langle f_\mathfrak{s},f_\mathfrak{s}\rangle}{ \langle f_\mathfrak{t},f_\mathfrak{t}\rangle}=
\frac{\langle f_\mathfrak{u},f_\mathfrak{u}\rangle}{ \langle f_\mathfrak{v},f_\mathfrak{v}\rangle}.
\end{align*}
\end{lemma}
\begin{proof}
Let $\Shape(\mathfrak{s}\downarrow_k)=(\lambda,l)$, where $(\lambda,l)\in\hat{A}_k$ and $(\lambda,l)\to(\mu,m)$ in $\hat{A}$.
Then, 
\begin{align*}
c_{(\mu,m)}^{(k+1)}d_{(\lambda,l)\to(\mu,m)}^{(k+1)} =\left(u^{(k+1)}_{(\lambda,l)\to(\mu,m)}\right)^{\hspace{-1.5pt}*}c_{(\lambda,l)}^{(k)}. 
\end{align*}
Moreover, if  $a\in A_k$, where $c_{(\lambda,l)}^{(k)}a\equiv 0\mod A_{k}^{\rhd(\lambda,l)}$, then Proposition~\ref{filtration} gives the relation 
\begin{align}\label{modstuff}
\left(u^{(k+1)}_{(\lambda,l)\to(\mu,m)}\right)^{\hspace{-1.5pt}*}c_{(\lambda,l)}^{(k)}a \equiv0\mod A_{k+1}^{\rhd(\mu,m)}. 
\end{align}
Therefore,
\begin{align*}
c_{(\mu,m)}^{(k+1)}d_{(\lambda,l)\to(\mu,m)}^{(k+1)} d_\mathfrak{t} F_\mathfrak{s} d_\mathfrak{t}^* (d_{(\lambda,l)\to(\mu,m)}^{(k+1)})^*c_{(\mu,m)}^{(k+1)} = 
(u_{(\lambda,l)\to(\mu,m)}^{(k+1)} )^* c_{(\lambda,l)}^{(k)} d_\mathfrak{t} F_\mathfrak{t} d_\mathfrak{t}^* c_{(\lambda,l)}^{(k)} \Psi_{\mathfrak{s},k}u_{(\lambda,l)\to(\mu,m)}^{(k+1)},
\end{align*}
where $\Psi_{\mathfrak{s},k}=\Psi_{\mathfrak{s},k}(L_{k+1})$ is a polynomial in $L_{k+1}$ over $\mathbb{F}$ such that $F_\mathfrak{s}=F_\mathfrak{t}\Psi_{\mathfrak{s},k}$. Since,
\begin{align*}
c_{(\lambda,l)}^{(k)} d_\mathfrak{t} F_\mathfrak{t} d_\mathfrak{t}^* c_{(\lambda,l)}^{(k)}\equiv \langle f_\mathfrak{t},f_\mathfrak{t}\rangle c_{(\lambda,l)}^{(k)}  \mod A_{k}^{\rhd(\lambda,l)},
\end{align*}
using the relation~\eqref{modstuff}, we have 
\begin{align*}
c_{(\mu,m)}^{(k+1)}d_{(\lambda,l)\to(\mu,m)}^{(k+1)} d_\mathfrak{t} F_\mathfrak{s} d_\mathfrak{t}^* (d_{(\lambda,l)\to(\mu,m)}^{(k+1)})^*c_{(\mu,m)}^{(k+1)} \equiv \langle f_\mathfrak{t},f_\mathfrak{t}\rangle  c_{(\mu,m)}^{(k+1)}d_{(\lambda,l)\to(\mu,m)}^{(k+1)} \Psi_{\mathfrak{s},k} u_{(\lambda,l)\to(\mu,m)}^{(k+1)}. 
\end{align*}
The same argument shows that
\begin{align*}
c_{(\mu,m)}^{(k+1)}d_{(\lambda,l)\to(\mu,m)}^{(k+1)} d_\mathfrak{v} F_\mathfrak{t} d_\mathfrak{v}^* (d_{(\lambda,l)\to(\mu,m)}^{(k+1)})^*c_{(\mu,m)}^{(k+1)} \equiv \langle f_\mathfrak{v},f_\mathfrak{v}\rangle  c_{(\mu,m)}^{(k+1)}d_{(\lambda,l)\to(\mu,m)}^{(k+1)} \Psi_{\mathfrak{u},k} u_{(\lambda,l)\to(\mu,m)}^{(k+1)}. 
\end{align*}
Since the polynomials $\Psi_{\mathfrak{s},k},\Psi_{\mathfrak{u},k}$ depend only on the $(k+1)^{\text{st}}$ edges in the paths $\mathfrak{s}$ and $\mathfrak{u}$ respectively, we have $\Psi_{\mathfrak{s},k}=\Psi_{\mathfrak{u},k}$, and the statement follows.  
\end{proof}

\begin{lemma}\label{observation:2}
Let $(\lambda,l)\in\hat{A}_{k+1}$ and  $\mathfrak{s}\in\hat{A}_{k+1}^{(\lambda,l)}$. If $\mathfrak{t}=\mathfrak{s}\sigma_{k}$ exists and $\mathfrak{t}\succ\mathfrak{s}$, then
\begin{align*}
\langle f_\mathfrak{s},f_\mathfrak{s}\rangle = \frac{(c_\mathfrak{s}(k+1)-c_\mathfrak{s}(k-1))^2-1}{(c_\mathfrak{s}(k+1)-c_\mathfrak{s}(k-1))^2}\langle f_\mathfrak{t},f_\mathfrak{t}\rangle. 
\end{align*}
\end{lemma}
\begin{proof}
We use the relation $\langle f_\mathfrak{s},f_\mathfrak{s}\rangle =\langle f_\mathfrak{s}\sigma_k,f_\mathfrak{s}\sigma_k\rangle$ and the formula~\eqref{sigmaeven.5.1} or~\eqref{sigmaodd.5.1}, depending on whether $k$ is odd or even. 
\end{proof}
Both our proof of the branching rule~\eqref{quotient} on inner products and our derivation of the off-diagonal structure constants for the contraction $e_k$, for $k$ even, will depend on the following combinatorial lemma. 
\begin{lemma}\label{obs:5}
Let $k$ be even and $(\lambda,l)\in\hat{A}_{k}$. Assume that $\mathfrak{s}=\mathfrak{t}^{(\lambda,l)}\in\hat{A}_{k}^{(\lambda,l)}$ and  let $\alpha=(j,\lambda_j)\in R(\lambda)$, where $\lambda_{j+1}>0$. Denote $\upsilon=\lambda\setminus\lbrace \alpha\rbrace$,  $a=l+\sum_{r=0}^j\lambda_r$ and $t=\sum_{r>j}\lambda_r$. Then the following statements hold:
\begin{enumerate}
[label=(\arabic{*}), ref=\arabic{*},leftmargin=0pt,itemindent=1.5em]
\item\label{obs:5.1} $\lambda^{(2a)}=\lambda^{(2a-1)}\cup\lbrace \alpha\rbrace$.
\item\label{obs:5.2} If $0\le r\le t,$ then $\mathfrak{s}_r=(\cdots ((\mathfrak{s}\sigma_{2a+1})\sigma_{2a+3})\cdots)\sigma_{2a+2r-1}$ exists in $\hat{A}_k^{(\lambda,l)}$.
\item\label{obs:5.3} If $\mathfrak{s}_r=((\nu^{(0)},u_0),\ldots,(\nu^{(k)},u_k))$, then $\nu^{(2a+2r)}=\nu^{(2a+2r-1)}\cup\lbrace \alpha\rbrace$ and $\nu^{(i)}=\lambda^{(i)}$ for $i=2a+2r,\ldots ,k$.
\item\label{obs:5.4} The sequence $\lbrace \mathfrak{s}_i\mid i=0,\ldots,t\rbrace $ satisfies $\mathfrak{s}_0\succ\mathfrak{s}_1\succ\cdots\succ\mathfrak{s}_{t}$. 
\item\label{obs:5.5} If $r=0,1,\ldots,t$, then $f_{\mathfrak{s}_r}\sigma_{2a+2r-2}=f_{\mathfrak{s}_r}$. 
\item\label{obs:5.6} If $\mathfrak{t}=\mathfrak{s}_{t}$,  then $\Shape(\mathfrak{t}\downarrow_{k-1})=(\upsilon,l)$ and 
\begin{align}\label{expansion}
f_\mathfrak{s} \sigma_{2a+1}\sigma_{2a+3}\cdots \sigma_{2a+2t-1}=f_\mathfrak{t}+\sum_{\mathfrak{u}\succ\mathfrak{t}}r_\mathfrak{u}f_\mathfrak{u},
\end{align}
where the sum is over $\mathfrak{u}\in\hat{A}_{k}^{(\lambda)}$ such that $\Shape(\mathfrak{u}\downarrow_{k-1})\ne (\upsilon,l)$. 
\end{enumerate}
\end{lemma}
\begin{proof}
\eqref{obs:5.1} Let $\mathfrak{s}=((\lambda^{(0)},l_0),\ldots,(\lambda^{(k)},l_k))$. Observe that $(\lambda^{(r)},l_{r})=(\emptyset,r)$ for $0\le r\le 2l+1$ and $\lambda^{(i-1)}\subsetneq\lambda^{(i)}$ exactly when $i=2l+2,2l+4,\ldots,2l+2a+2t.$ By the maximality property of $\mathfrak{s}$, we have $\lambda^{(r+1)}=\lambda^{(r)}\cup \lbrace\alpha\rbrace$ at the $a^\text{th}$ instance where $\lambda^{(r)}\subsetneq\lambda^{(r+1)}$. Thus the item  follows.

\eqref{obs:5.2} Let $\lbrace\beta_1,\ldots,\beta_{t}\rbrace\subseteq\lambda$ denote the set of nodes of $\lambda$ with row index greater that $j$, ordered so that $\lambda^{(2a+2r)}=\lambda^{(2a+2r-1)}\cup\lbrace \beta_{r}\rbrace$ for $r=1,2,\ldots,t$. The assumption that $\alpha\in R(\lambda)$ implies that if  $1\le i\le t$, then  $\alpha$ and $\beta_r$ are neither in the same row nor the same column. In particular, since $\lambda^{(2a+2)}\ominus\lambda^{(2a-1)}=\lbrace\alpha,\beta_1\rbrace$, the path $\mathfrak{s}_1=\mathfrak{s}\sigma_{2a+1}$ exists. If $\mathfrak{s}_1=((\mu^{(0)},l_0),\ldots,(\mu^{(k)},l_k))$, then $(\mu^{(i)},l_i)=(\lambda^{(i)},l_i)$ except at the positions $i=2a$ and $i=2a+1$,  where $\mu^{(2a)}=\mu^{(2a+1)}=\mu^{(2a-1)}\cup\lbrace\beta_1\rbrace$. Note that $\mu^{(2a+2)}=\mu^{(2a+1)}\cup\lbrace\alpha\rbrace$ and $\mu^{(2a+4)}=\mu^{(2a+3)}\cup\lbrace\beta_2\rbrace$, where the nodes $\alpha$ and $\beta_2$ are neither in the same row nor the same column. Thus $\mathfrak{s}_2=\mathfrak{s}_1\sigma_{2a+3}$ exists and, continuing by induction, we construct the sequence $\mathfrak{s}_0,\mathfrak{s}_1,\ldots,\mathfrak{s}_t$ in $\hat{A}_k^{(\lambda,l)}$. 

\eqref{obs:5.3} The observation follows from the construction of the path $\mathfrak{s}_r$ above. 

\eqref{obs:5.4} 
If $\mathfrak{s}_{r-1}=((\mu^{(0)},l_0),\ldots,(\mu^{(k)},l_k))$ and $\mathfrak{s}_{r}=((\nu^{(0)},l_0),\ldots,(\nu^{(k)},l_k))$, then 
\begin{align*}
\mu^{(2a+2r)}=\mu^{(2a+2r-1)}\cup\lbrace\beta_r\rbrace\qquad\text{and}\qquad \nu^{(2a+2r)}=\nu^{(2a+2r-1)}\cup\lbrace\alpha\rbrace.
\end{align*}
Since $\mu^{(i)}=\nu^{(i)}$ for $i=2a+2r+2,\ldots,k$ we conclude that $\mu^{(2a+2r)}\rhd\mu^{(2a+2r-1)}$ and thus that the relation $\mathfrak{s}_{r-1}\succ\mathfrak{s}_r$ holds.

\eqref{obs:5.5} Assuming that $0\le r\le t$ and $\mathfrak{s}_r=((\nu^{(0)},l_0),\ldots,(\nu^{(k)},l_k))$, we have the inclusions $\nu^{(2a+2r-4)}=\nu^{(2a+2r-3)}\subsetneq \nu^{(2a+2r-2)}=\nu^{(2a+2r-1)}$. Thus $f_{\mathfrak{s}_r}\sigma_{2a+2r-2}=f_{\mathfrak{s}_r}$, by  Theorem~\ref{sigmaeven}\eqref{sigmaeven.4}. 

\eqref{obs:5.6} If $t=1$, the statement is a consequence of Theorem~\ref{sigmaodd}\eqref{sigmaodd.5}. Otherwise, by induction,  
\begin{align*}
f_\mathfrak{s}\sigma_{2a+1} \sigma_{2a+3}\cdots \sigma_{2a+2t-1} &=f_\mathfrak{t}+\sum_{\mathfrak{u}\succ\mathfrak{t}} r'_\mathfrak{u}f_\mathfrak{u}+ \sigma_{2a+1}(\mathfrak{s,s})f_\mathfrak{s}\sigma_{2a+3}\cdots \sigma_{2a+2t-1}.
\end{align*}
We focus our attention on the term $f_\mathfrak{s}\sigma_{2a+3}\cdots \sigma_{2a+2t-1}$. There two cases to consider. 

{\textsc{Case~1.}} If $\lbrace \beta_2,\beta_3,\cdots, \beta_t\rbrace$ all lie in the same row or the same column, Theorem~\ref{sigmaodd}\eqref{sigmaodd.4} implies that either $f_\mathfrak{s}\sigma_{2a+3}\cdots \sigma_{2a+2t-1} = f_\mathfrak{s}$ or $f_\mathfrak{s}\sigma_{2a+3}\cdots \sigma_{2a+2t-1} = -f_\mathfrak{s}$ and we are done. 

{\textsc{Case~2.}} If $\lambda_{j+1}=s$ and $\lambda_{j+2}>0$, then Theorem~\ref{sigmaodd}\eqref{sigmaodd.4} gives 
\begin{align*}
f_\mathfrak{s}\sigma_{2a+3}\sigma_{2a+5}\cdots \sigma_{2a+2t-1}&= f_\mathfrak{s}\sigma_{2a+2s+1}\sigma_{2a+2s+3}\cdots \sigma_{2a+2t-1},
\end{align*} 
where
\begin{align}\label{expand}
f_\mathfrak{s}\sigma_{2a+2s+1}\sigma_{2a+2s+3}\cdots \sigma_{2a+2t-1}=\sum_{\mathfrak{v}} r''_\mathfrak{v}f_\mathfrak{v}.
\end{align}
Firstly, we  show that if $\mathfrak{v}=((\rho^{(0)},r_0),\ldots,(\rho^{(k)},r_k))$ and $r''_\mathfrak{v}\ne0$ in the expression~\eqref{expand}, then $\lambda\ne \rho^{(k-1)}\cup\lbrace a\rbrace$. Using the maximality of $\mathfrak{s}$ and  parts~\eqref{sigmaodd.4} and~\eqref{sigmaodd.5} of Theorem~\ref{sigmaodd}, we observe that $\rho^{(2i-3)}\subsetneq\rho^{(2i-2)}=\rho^{(2i-1)}\subsetneq\rho^{(2i)}$ for $i=l+2,l+3,\ldots,a+t$. Hence
\begin{align*}
\big\lbrace i\mid 1\le i\le k\text{ and } \rho^{(i)}\subsetneq\rho^{(i-1)}\big\rbrace=\emptyset.
\end{align*}
Now note that $L_{i}$ and  $\sigma_{2a+2s+1}\sigma_{2a+2s+3}\cdots \sigma_{2a+2t-1}$ commute for $i=1,\ldots,2a$. Hence the separating property of the elements $\lbrace L_1,\ldots,L_{2a}\rbrace$ implies that $\mathfrak{v}\downarrow_{2a}=\mathfrak{s}\downarrow_{2a}$. It follows that $\rho^{(2a)}=\rho^{(2a-1)}\cup\lbrace\alpha\rbrace$ and $\lambda\ne\rho^{(k-1)}\cup\lbrace \alpha\rbrace$. Since $\lambda=\rho^{(k-1)}\cup \lbrace\beta_i\rbrace$, where $s\le i\le t$, we have $\rho^{(k-1)}\rhd\upsilon$ and $\mathfrak{v}\succ\mathfrak{t}$. This completes the proof of the lemma. 
\end{proof}
If $\lambda=(\lambda_1,\lambda_2,\ldots,\lambda_t)$ is a partition, let $\lambda!=\prod_{r=1}^t(\lambda_r!)$. 
\begin{lemma}\label{factorial}
If $(\lambda,l)\in\hat{A}_k$, then $\langle f_{\mathfrak{t}^{(\lambda,l)}},f_{\mathfrak{t}^{(\lambda,l)}}\rangle= \lambda!z^l$. 
\end{lemma}
\begin{proof}
The statement follows from the definition of $a_{(\lambda,l)}$ and the corresponding result~\cite[Lemma~3.41]{MR1711316} for the group algebra of the symmetric group. 
\end{proof}
\begin{proof}[Proof of Proposition~\ref{branching}] Let $\mathfrak{s}\in\hat{A}_{k+1}^{(\mu,m)}$ and  $\mathfrak{s}\downarrow_{k}=\mathfrak{s}'\in\hat{A}_k^{(\lambda,l)}$. We compute the quotient on the left hand side of the expression~\eqref{quotient} in four separate cases below.

{\textsc{Case 1.}} Assume that $k$ is even and $(\mu,l)=(\lambda,l)$.  By Lemma~\ref{observation:0}, there is no loss of generality in assuming that $\mathfrak{s}$ is maximal in $\hat{A}_{k+1}^{(\mu,m)}$ and  $\mathfrak{s'}$ is maximal in $\in\hat{A}_{k}^{(\mu,m)}$. Using Lemma~\ref{factorial}, we have
$\langle f_\mathfrak{s},f_\mathfrak{s}\rangle = \langle f_\mathfrak{s'},f_\mathfrak{s'}\rangle$, which verifies the relation~\eqref{quotient} in this case. 

{\textsc{Case 2.}} Assume that $k$ is odd and let $\mathfrak{s}=((\mu^{(0)},m_0),\ldots,(\mu^{(k+1)},m_{k+1}))\in\hat{A}_{k+1}^{(\mu,m)}$, where $(\mu^{(k)},m_k)=(\lambda,m)$,  $\mu=\lambda\cup\lbrace \alpha\rbrace$ and  $\alpha=(j,\mu_j)$. If $\mathfrak{s}$ is maximal in $\hat{A}_{k+1}^{(\lambda,l)}$, then $\mu_{j+1}=0$; hence  $\langle f_\mathfrak{s},f_\mathfrak{s}\rangle =\mu_j\langle f_\mathfrak{s'},f_\mathfrak{s'}\rangle$ and there is nothing further to show. We therefore proceed by induction. There is no loss of generality in assuming that $\mu_{j+1}>0$ and $\mathfrak{s'}$ is maximal in $\hat{A}_k^{(\lambda,m)}$. Since $\mu^{(k-2)}\subsetneq \mu^{(k-1)}=\mu^{(k)}\subsetneq \mu^{(k+1)}$, we may assume that $\mu^{(k+1)}\ominus \mu^{(k-2)}=\lbrace\alpha,\beta\rbrace$, where $\beta=(r,\mu_r)$ and $\mu_{r+1}=0$. By the hypotheses on $\mathfrak{s'}$, the nodes $\alpha$ and $\beta$ are neither in the same row nor in the same column. Hence there exists $\mathfrak{t}=\mathfrak{s}\sigma_k\in\hat{A}_{k+1}^{(\mu,m)}$ given by $\mathfrak{t}\downarrow_{k-2}=\mathfrak{s}\downarrow_{k-2}$ and $(\mathfrak{t}^{(k-1)},\mathfrak{t}^{(k)},\mathfrak{t}^{(k+1)})=((\upsilon,m),(\upsilon,m),(\mu,m))$, where $\upsilon=\mu^{(k-2)}\cup\lbrace \alpha \rbrace $ and $\mu=\upsilon\cup\lbrace\beta\rbrace$. Since $j<r$, we conclude that $\mathfrak{t}\succ\mathfrak{s}$. Let $\mathfrak{t}'=\mathfrak{t}\downarrow_{k}$.  By Lemma~\ref{observation:2} and induction on $\succcurlyeq$, we have 
\begin{align}\label{subst}
\frac{\langle f_\mathfrak{s},f_\mathfrak{s}\rangle}{\langle f_\mathfrak{t'},f_\mathfrak{t'}\rangle}= {\mu_r}\frac{(c(\alpha)-c(\beta))^2-1}{(c(\alpha)-c(\beta))^2}.
\end{align}
Let $\mathfrak{u}=\mathfrak{s}\downarrow_{k-2}$. By the first case and induction on $k$, we obtain
\begin{align*}
\frac{\langle f_\mathfrak{s'},f_\mathfrak{s'}\rangle}{\langle f_\mathfrak{u},f_\mathfrak{u}\rangle}=\mu_r\qquad\text{and}\qquad \frac{\langle f_\mathfrak{t'},f_\mathfrak{t'}\rangle}{\langle f_\mathfrak{u},f_\mathfrak{u}\rangle}=\frac{\prod_{\delta\in A(\upsilon)^{<\alpha}}(c(\alpha)-c(\delta))}{\prod_{\delta\in R(\upsilon)^{<\alpha}}(c(\alpha)-c(\delta))},
\end{align*}
which, together with the relation~\eqref{subst}, gives the required result.

{\textsc{Case 3.}} Assume that $k$ is even,  $m=l+1$ and $\lambda=\mu\cup\lbrace(j,\lambda_j)\rbrace$. By Lemma~\ref{observation:0}, there is no loss of generality in assuming that $\mathfrak{s'}=\mathfrak{t}^{(\lambda,l)}\in\hat{A}_k^{(\lambda,l)}$.  Let $(\rho,l)\in\hat{A}_k$ be minimal with respect to the property that $(\rho,l)\to(\mu,m)$ is an edge in $\hat{A}$ and define $\mathfrak{u}\in\hat{A}_{k+1}^{(\mu,m)}$ by the condition that $\mathfrak{u}'=\mathfrak{u}\downarrow_{k}=\mathfrak{t}^{(\rho,l)}\in\hat{A}_{k}^{(\rho,l)}$. Writing $a=l+\sum_{r=0}^{j}\mu_r$, we have 
\begin{align*}
p_\mathfrak{u}=w_{m,i}e_k\qquad\text{and}\qquad p_\mathfrak{s}=w_{m,i}e_kw_{i,a}\sum_{r=0}^{\mu_j}w_{a,a-r}.
\end{align*}
Thus
\begin{align*}
m_\mathfrak{us}=a_{(\mu,m-1)}^{(k-1)}e_kw_{i,a}\sum_{r=0}^{\mu_j}w_{a,a-r}=e_kw_{i,a}a_{(\lambda,l)}^{(k)}.
\end{align*}
Furthermore, if $p\in A_k$ and $a_{(\lambda,l)}^{(k)}p\in A_k^{\rhd(\lambda,l)}$, then  Proposition~\ref{filtration} implies the relation
\[
e_kw_{i,a}a_{(\lambda,l)}^{(k)}p\equiv0\mod A_{k+1}^{\rhd(\mu,m)}.
\] 
Let $\Psi_{\mathfrak{s},k}=\Psi_{\mathfrak{s},k}(L_{k+1})$ be a polynomial in $L_{k+1}$, such that $F_\mathfrak{s}=F_\mathfrak{s'}\Psi_{\mathfrak{s},k}$. Since 
\[
m_\mathfrak{us}F_\mathfrak{s}m_\mathfrak{su}\equiv \langle f_\mathfrak{s},f_\mathfrak{s}\rangle m_\mathfrak{uu}\mod A_{k+1}^{\rhd(\mu,m)}, 
\]
we use the relation $F_\mathfrak{s}=\langle f_\mathfrak{s},f_\mathfrak{s}\rangle^{-1}F_\mathfrak{ss}$ to obtain  
\begin{align}
m_\mathfrak{us}F_\mathfrak{s}m_\mathfrak{su}&\equiv e_kw_{i,a} a_{(\lambda,l)}^{(k)}F_{s'}a_{(\lambda,l)}\Psi_{\mathfrak{s},k}w_{a,i}e_k\notag\\
&\equiv \langle f_\mathfrak{s'},f_\mathfrak{s'}\rangle e_kw_{i,a}a_{(\lambda,l)}^{(k)}F_\mathfrak{s'} \Psi_{\mathfrak{s},k}w_{a,i}e_k \notag \\ 
&\equiv \langle f_\mathfrak{s'},f_\mathfrak{s'}\rangle e_kw_{i,a}F_\mathfrak{s's'}\Psi_{\mathfrak{s},k}  w_{a,i}e_k \notag \\
&\equiv {\langle f_\mathfrak{s'},f_\mathfrak{s'}\rangle^2} e_kw_{i,a}F_\mathfrak{s} w_{a,i}e_k \notag \\
&\equiv {\langle f_\mathfrak{s'},f_\mathfrak{s'}\rangle^2}{\langle f_\mathfrak{s},f_\mathfrak{s}\rangle}^{-1} e_kw_{i,a}F_\mathfrak{ss} w_{a,i}e_k \mod A_{k+1}^{\rhd(\mu,m)}.\label{cong:1}
\end{align}
By Lemma~\ref{obs:5}, the path $\mathfrak{t}=\mathfrak{s}\sigma_{2a+1}\sigma_{2a+3}\ldots\sigma_{2i-1}$ exists in $\hat{A}_{k+1}^{(\mu,m)}$. Moreover, 
\begin{align*}
f_\mathfrak{s}w_{a,i}&=f_\mathfrak{s}s_{a}s_{a+1}\cdots s_{i-1}=f_\mathfrak{s}\sigma_{2a+1}\sigma_{2a+3}\ldots\sigma_{2i-1}= f_\mathfrak{t}+\sum_\mathfrak{a} r_\mathfrak{a}f_\mathfrak{a},
\end{align*}
where the sum is taken over $\mathfrak{a}\in\hat{A}_{k+1}^{(\mu,m)}$ such that  $\Shape(\mathfrak{a}\downarrow_{k-1})\ne (\mu,l)$. Thus the congruence~\eqref{cong:1} becomes
\begin{align*}
\langle f_\mathfrak{s},f_\mathfrak{s}\rangle m_\mathfrak{uu}
&\equiv\langle f_\mathfrak{s'},f_\mathfrak{s'}\rangle^2\langle f_\mathfrak{s},f_\mathfrak{s}\rangle^{-1} e_kF_\mathfrak{tt} e_k + \sum_\mathfrak{a,b} r_\mathfrak{a,b} e_k F_\mathfrak{ab}e_k \mod A_{k+1}^{\rhd(\mu,m)}, 
\end{align*}
where the sum is over $\mathfrak{a,b}\in\hat{A}_{k+1}^{(\mu,m)}$ such that $\Shape(\mathfrak{a}\downarrow_{k-1})\ne(\mu,l)$ and $\Shape(\mathfrak{b}\downarrow_{k-1})\ne(\mu,l)$. Let $\mathfrak{v}=\mathfrak{u}\downarrow_{k-1}$. We may now use Proposition~\ref{nonzero} and the observation that $\mathfrak{t}\downarrow_{k-1}=\mathfrak{u}\downarrow_{k-1}$ to obtain  
\begin{align}
\langle f_\mathfrak{s},f_\mathfrak{s}\rangle m_\mathfrak{uu}
&\equiv\langle f_\mathfrak{s'},f_\mathfrak{s'}\rangle^2\langle f_\mathfrak{s},f_\mathfrak{s}\rangle^{-1}\langle f_\mathfrak{t},f_\mathfrak{t}\rangle   e_kF_\mathfrak{t}e_k\notag \\
&\equiv\langle f_\mathfrak{s'},f_\mathfrak{s'}\rangle^2\langle f_\mathfrak{s},f_\mathfrak{s}\rangle^{-1}\langle f_\mathfrak{t},f_\mathfrak{t}\rangle e_k(\mathfrak{t,t})F_\mathfrak{v}e_k\notag \\
&\equiv \langle f_\mathfrak{s'},f_\mathfrak{s'}\rangle^2\langle f_\mathfrak{s},f_\mathfrak{s}\rangle^{-1}\langle f_\mathfrak{t},f_\mathfrak{t}\rangle \langle f_\mathfrak{v},f_\mathfrak{v}\rangle^{-1}e_k(\mathfrak{t,t})F_\mathfrak{vv}e_k \notag\\
&\equiv \langle f_\mathfrak{s'},f_\mathfrak{s'}\rangle^2\langle f_\mathfrak{s},f_\mathfrak{s}\rangle^{-1}\langle f_\mathfrak{t},f_\mathfrak{t}\rangle \langle f_\mathfrak{v},f_\mathfrak{v}\rangle^{-1} e_k(\mathfrak{t,t})m_\mathfrak{uu}\label{caseiii} \mod A_{k+1}^{\rhd(\mu,m)}.
\end{align}
Let $\mathfrak{t}'=\mathfrak{t}\downarrow_{k}$. Using Lemma~\ref{observation:0} and the previous case, the relation~\eqref{caseiii} gives 
\begin{align*}
\frac{\langle f_\mathfrak{s},f_\mathfrak{s}\rangle}{\langle f_\mathfrak{s'},f_\mathfrak{s'}\rangle}
= \frac{\langle f_\mathfrak{s'},f_\mathfrak{s'}\rangle}{\langle f_\mathfrak{s},f_\mathfrak{s}\rangle}
\frac{\langle f_\mathfrak{t},f_\mathfrak{t}\rangle}{\langle f_\mathfrak{t'},f_\mathfrak{t'}\rangle} \frac{\langle f_\mathfrak{t'},f_\mathfrak{t'}\rangle}{\langle f_\mathfrak{v},f_\mathfrak{v}\rangle}e_k(\mathfrak{t,t})=\frac{\langle f_\mathfrak{t'},f_\mathfrak{t'}\rangle}{\langle f_\mathfrak{v},f_\mathfrak{v}\rangle}e_k(\mathfrak{t,t})= e_k(\mathfrak{t,t})\gamma_{\mu\to\lambda},
\end{align*}
as required.

{\textsc{Case 4.}} Let $k=2i+1$ and assume that $\Shape(\mathfrak{s'})=(\mu,m-1)$. There is no loss of generality in assuming that $\mathfrak{s'}=\mathfrak{t}^{(\mu,m-1)}\in\hat{A}_k^{(\mu,m-1)}$. Then $p_\mathfrak{s}=w_{m,i}$ and 
\begin{align*}
m_\mathfrak{ss}=a_{(\mu,m-1)}^{(k-1)}e_k=e_k a_{(\mu,m-1)}^{(k)}. 
\end{align*}
Let $\Psi_{\mathfrak{s},k}$ be a polynomial in $L_{k+1}$, where $F_\mathfrak{s}=F_\mathfrak{s'}\Psi_{\mathfrak{s},k}$. Then 
\begin{align*}
\langle f_\mathfrak{s},f_\mathfrak{s}\rangle m_\mathfrak{ss} &\equiv e_k a_{(\mu,m-1)}^{(k)} F_\mathfrak{s'} a_{(\mu,m-1)}^{(k)}\Psi_{\mathfrak{s},k}e_k \\
&\equiv \langle f_\mathfrak{s'},f_\mathfrak{s'}\rangle a_{(\mu,m-1)}^{(k-1)} e_kF_\mathfrak{s}e_k \\
&\equiv \langle f_\mathfrak{s'},f_\mathfrak{s'}\rangle e_k(\mathfrak{s,s})m_\mathfrak{ss} \mod A_{k+1}^{\rhd(\mu,m)}
\end{align*}
gives the required result.
\end{proof}
Our proof of Lemma~\ref{offdiag:2} requires the following observation.
\begin{corollary}\label{obs:7}
Let $k=2i+1$ and  $(\mu,m)\in\hat{A}_{k}$. Assume that $\mathfrak{s}=\mathfrak{t}^{(\mu,m)}\in\hat{A}_{k}^{(\mu,m)}$ and $(\lambda,m)\in\hat{A}_{k-1}$, where $\mu=\lambda\cup\lbrace(j,\mu_j)\rbrace$ and $\mu_{j+1}>0$. Let $a=m+\sum_{r=0}^j\mu_r$. Then $\mathfrak{v}=(\cdots ((\mathfrak{s}\sigma_{2a+1})\sigma_{2a+3})\cdots)\sigma_{2i-1}$ exists in $\hat{A}_{k}^{(\mu,m)}$ and 
\begin{align*}
f_\mathfrak{v}w_{a,i}=\frac{\gamma_{\lambda\to\mu}}{\mu_j}f_\mathfrak{s}+\sum_{\mathfrak{s}\succ\mathfrak{u}}r_\mathfrak{u}f_\mathfrak{u}.
\end{align*}
\end{corollary}
\begin{proof}
By Lemma~\ref{obs:5}, we have $\langle f_\mathfrak{v}w_{i,a},f_\mathfrak{s}\rangle = \langle f_\mathfrak{v},f_\mathfrak{s}w_{a,i}\rangle= \langle f_\mathfrak{v},f_\mathfrak{v}\rangle$. Therefore the coefficient of $f_\mathfrak{s}$ in the expansion of $f_\mathfrak{v}w_{i,a}$ in the seminormal basis is $\langle f_\mathfrak{v},f_\mathfrak{v}\rangle \langle f_\mathfrak{s},f_\mathfrak{s}\rangle^{-1}$. If we denote $\mathfrak{v}'=\mathfrak{v}\downarrow_{k-1}$, then the branching factors for inner products given in Proposition~\ref{branching} together with the expressions for $\langle f_\mathfrak{v'},f_\mathfrak{v'}\rangle$ and $\langle f_\mathfrak{s},f_\mathfrak{s}\rangle$ given by Lemma~\ref{factorial} yield the required statement. 
\end{proof}
For $(\lambda,l)\in\hat{A}_{k}$, let $G^{(\lambda,l)}_k$ denote the Gram matrix of the bilinear form~\eqref{f-d-1} on $\Delta^{(\lambda,l)}_{k}$ and $\det\big(G^{(\lambda,l)}_k\big)$ denote the determinant of $G^{(\lambda,l)}_k$.  We recall the definition~\eqref{gammadef} of branching factors for the inner product on cell modules.
\begin{theorem}\label{detrecursion}
Let $(\mu,m)\in\hat{A}_{k+1}$. Then 
\begin{align}\label{dets}
\det \big(G_{k+1}^{(\mu,m)}\big)=\prod_{(\lambda,l)\to(\mu,m)}\det \big(G_{k}^{(\lambda,l)}\big)\big(\gamma_{(\lambda,l)\to(\mu,m)}^{(k+1)}\big)^{\dim(\Delta_{k}^{(\lambda,l)})},
\end{align}
where the product is taken over $(\lambda,l)\in\hat{A}_{k}$ such that $(\lambda,l)\to(\mu,m)$ in $\hat{A}$.
\end{theorem}
Theorem~\ref{detrecursion} provides a partition algebra counterpart to the branching rule for discriminants associated with cell modules of Brauer algebras given by Rui and Si~\cite[Theorem~4.11]{MR2369064}. For the representations of the partition algebras which factor through the group algebra of the symmetric group, the recursion~\eqref{dets} coincides with the well-known branching formula for the determinant of a Specht module given by James and Murphy~\cite[Sect.~2]{MR541676}.

\section{Tables of Representing Matrices}\label{stables}
The tables in this section give the representing matrices for ${e}_i,\sigma_{i}$, $i=1,\ldots,k-1,$ relative to seminormal bases for selected cell modules of small rank. In each example, the set $\hat{A}_k^{(\lambda,l)}$ is linearly ordered by $\succcurlyeq$ and the bases $\lbrace f_{\mathfrak{t}_i}\mid  \mathfrak{t}_i\in\hat{A}_k^{(\lambda,l)} \rbrace$  for $\Delta_{k,\mathbb{F}}^{(\lambda,l)}$, and $(\lambda,l)\in\hat{A}_k$, are indexed so that $i>j$ whenever $\mathfrak{t}_j\succ \mathfrak{t}_i$. The representing matrices have been verified in the computer algebra package Maple. 
\vfill

\begin{table}[hhh]
\centering
\begin{tabular}{ccc}
\begin{tabular}{cc}
 \toprule
 $A_2(z)$& $\lambda=\emptyset$\\
 \midrule
 ${e}_{1}\mapsto z$ &  \\
 \bottomrule
 \end{tabular} 
 & \hspace{2em}&
 \begin{tabular}{ll}
 \toprule
 $A_{2}(z)$ & $\lambda=(1)$\\
 \midrule
 ${e}_{1}\mapsto 0 $ & \\
 \bottomrule
 \end{tabular} 
\end{tabular}
\end{table}

\begin{table}[hhh]
\centering
 \begin{tabular}{ll}
 \toprule
 $A_{3}(z)$ & $\lambda=\emptyset$\\
 \midrule
 ${e}_{1}\mapsto\begin{bmatrix}z&0\\0&0\end{bmatrix}$ & ${e}_{2}\mapsto\begin{bmatrix}\frac{1}{z}&{\textstyle\frac{z-1}{z^2}} \\1&{\textstyle\frac{z-1}{z}}\end{bmatrix}$\\ 
 \bottomrule
 \end{tabular}
\end{table}

\begin{table}[hhh]
\centering
\begin{tabular}{cc}
 \toprule
 $A_3(z)$& $\lambda=(1)$ \\
 \midrule
 ${e}_{1}\mapsto 0$ & $e_2\mapsto 0$  \\
 \bottomrule
 \end{tabular}
\end{table}

\vfill 
\newpage 

\begin{landscape}
\
\vfill 
\begin{table}[hhh]
\centering
\begin{tabular}{c}
 \begin{tabular}{cccc}
 \toprule
 $A_{4}(z)$ & $\lambda=\emptyset$\\
 \midrule\vspace{-01.5em}\\
 ${e}_{1}\mapsto\begin{bmatrix}z&0\\0&0\end{bmatrix}$ & ${e}_{2}\mapsto\begin{bmatrix}\frac{1}{z}&{\textstyle\frac{z-1}{z^2}} \\1&{\textstyle\frac{z-1}{z}}\end{bmatrix}$ &
 ${e}_{3}\mapsto\begin{bmatrix}z&0\\0&0\end{bmatrix}$ & ${\sigma}_{3}\mapsto\begin{bmatrix}1&0\\0&1\end{bmatrix}$\medskip\\
 \bottomrule
 \end{tabular}
\\
\\
 \begin{tabular}{cccc}
 \toprule
 $A_{4}(z)$& $\lambda=(1)$\\
 \midrule\vspace{-01.5em}\\ 
 ${e}_{1}\mapsto\begin{bmatrix}z&0&0\\0&0&0\\0&0&0\end{bmatrix}$ & ${e}_{2}\mapsto\begin{bmatrix}\frac{1}{z}&\frac{z-1}{z^2}&0 \\1&\frac{z-1}{z}&0\\ 0&0&0\end{bmatrix}$ & 
 ${e}_{3}\mapsto\begin{bmatrix}0&0&0\\0&\frac{z}{z-1}&\frac{z^2(z-2)}{(z-1)^2}\\ 0& 1& \frac{z(z-2)}{z-1}\end{bmatrix}$ 
 & ${\sigma}_{3}\mapsto\begin{bmatrix}0&\frac{1}{z}&\frac{z-2}{z-1}\\ 
 \frac{z}{z-1}& \frac{z-2}{z-1}&-\frac{z(z-2)}{(z-1)^2}\\
 1&-\frac{1}{z}&\frac{1}{z-1} \end{bmatrix}$\medskip\\
 \bottomrule
\end{tabular}

\\

\\
\begin{tabular}{cccc}
 \begin{tabular}{ccccc}
 \toprule
 $A_{4}(z)$ & $\lambda=(1,1)$\\
 \midrule
 ${e}_{1}\mapsto 0 $ &  ${e}_{2}\mapsto 0 $&  ${e}_{3}\mapsto 0 $& ${\sigma}_{3}\mapsto -1$\\
 \bottomrule
 \end{tabular}
 & &
\begin{tabular}{cccc}
 \toprule
 $A_{4}(z)$ & $\lambda=(2)$\\
 \midrule
 ${e}_{1}\mapsto 0 $ & 
 ${e}_{2}\mapsto 0 $&  
 ${e}_{3}\mapsto 0 $& $\sigma_{3}\mapsto 1 $ \\
 \bottomrule
 \end{tabular}
\end{tabular}

\end{tabular}
\end{table}
\vfill 
\end{landscape}
\newpage
\begin{landscape}
\ 
\vfill 
\begin{table}[hhh]
\centering
 \begin{tabular}{ccc}
 \toprule
 $A_{5}(z)$ & $\lambda=\emptyset$\\
 \midrule
 ${e}_{1}\mapsto \begin{bmatrix}z&0&0&0&0\\0&0&0&0&0\\ 0&0&z&0&0\\ 0&0&0&0&0\\0&0&0&0&0\\ \end{bmatrix}$ &  
 ${e}_{2}\mapsto{\displaystyle\begin{bmatrix}{\frac{1}{z}}&{\textstyle\frac{z-1}{z^2}}   &0&0&0\\1&{\textstyle\frac{z-1}{z}}&0&0&0\\ 0&0&\frac{1}{z}&\frac{z-1}{z^2}&0\\0&0&1&\frac{z-1}{z}&0\\0&0&0&0&0\end{bmatrix}}$ & 
 ${e}_{3}\mapsto
 \begin{bmatrix}z&0&0&0&0
 \\0&0&0&0&0
 \\0&0&0&0&0
 \\0&0&0&\frac{z}{z-1}&\frac{z^2(z-2)}{(z-1)^2}
 \\0&0&0&1&\frac{z(z-2)}{z-1}
 \end{bmatrix}$ \medskip \\ 
 ${e}_{4}\mapsto\begin{bmatrix}\frac{1}{z}&0&\frac{z-1}{z^2}&0&0\\0&\frac{1}{z}&0&\frac{z-1}{z^2}&0\\1&0&\frac{z-1}{z}&0&0\\0&1&0&\frac{z-1}{z}&0\\0&0&0&0&0\end{bmatrix}$ & 
 ${\sigma}_{3}\mapsto
 \begin{bmatrix}1&0&0&0&0
 \\0&1&0&0&0\\
 0&0&0&\frac{1}{z}&\frac{z-2}{z-1}\\
 0&0&\frac{z}{z-1}&\frac{z-2}{z-1}&-\frac{z(z-2)}{(z-1)^2}\\
 0&0&1&-\frac{1}{z}&\frac{1}{z-1}\end{bmatrix}$ &
 $\sigma_{4}\mapsto
 \begin{bmatrix}1&0&0&0&0\\0&0&0&\frac{1}{z}&\frac{z-2}{z-1}\\0&0&1&0&0\\
 0&\frac{z}{z-1}&0&\frac{z-2}{z-1}&-\frac{z(z-2)}{(z-1)^2}\\0&1&0&-\frac{1}{z}&\frac{1}{z-1}\end{bmatrix}$\medskip\\
 \bottomrule
\end{tabular}
\end{table}
\vfill

\end{landscape}

\newpage
\begin{landscape}
\ 
\vfill 
\begin{table}[hhh]
\centering
\begin{tabular}{ccc}
\toprule
$A_{5}(z)$& $\lambda=(1)$\\
\midrule
  ${e}_{1}\mapsto\begin{bmatrix}z&0&0&0&0\\0&0&0&0&0\\ 0&0&0&0&0\\ 0&0&0&0&0\\0&0&0&0&0\\ \end{bmatrix}$
  & ${e}_{2}\mapsto{\displaystyle\begin{bmatrix}{\displaystyle\frac{1}{z}}&{\textstyle\frac{z-1}{z^2}} &0&0&0\\1&{\textstyle\frac{z-1}{z}}&0&0&0\\ 0&0&0&0&0\\0&0&0&0&0\\0&0&0&0&0\end{bmatrix}}$ 
  & ${e}_{3}\mapsto
 \begin{bmatrix}
 0&0&0&0&0\\
 0&\frac{z}{z-1}&\frac{z^2(z-2)}{(z-1)^2}&0&0\\
 0&1&\frac{z(z-2)}{z-1}&0&0\\
 0&0&0&0&0\\
 0&0&0&0&0
 \end{bmatrix}$  \medskip
 \\
 ${e}_{4}\mapsto
 \begin{bmatrix}
 0&0&0&0&0\\
 0&0&0&0&0\\
 0&0&\frac{z-1}{z(z-2)}&\frac{(z-3)(z-1)}{z(z-2)^2}&\frac{1}{2}\frac{(z-1)^2}{z^2(z-2)}\\
 0&0&\frac{1}{2}&\frac{1}{2}\frac{z-3}{(z-2)}&\frac{1}{4}\frac{z-1}{z}\\
 0&0&1&\frac{z-3}{z-2}&\frac{1}{2}\frac{z-1}{z}
 \end{bmatrix}$ & 
 ${\sigma}_{3}\mapsto
 \begin{bmatrix}
 0&\frac{1}{z}&\frac{z-2}{z-1}&0&0\\
 \frac{z}{z-1}&\frac{z-2}{z-1}&-\frac{z(z-2)}{(z-1)^2}&0&0\\
 1&-\frac{1}{z}&\frac{1}{z-1}&0&0\\
 0&0&0&1&0\\
 0&0&0&0&-1
 \end{bmatrix}$& 
 $\sigma_{4}\mapsto
 \begin{bmatrix}
 1&0&0&0&0\\
 0&-\frac{1}{z-1}&\frac{z}{(z-1)^2}&\frac{z(z-3)}{(z-1)(z-2)}&-\frac{1}{2}\\
 0&\frac{z}{z(z-2)}&\frac{z^3-3z^2+2z-1}{z(z-2)(z-1)}&-\frac{z-3}{z(z-2)^2}&\frac{1}{2}\frac{z-1}{z^2(z-2)}\\
 0&\frac{1}{2}&-\frac{1}{2}\frac{1}{(z-1)}&\frac{1}{2}\frac{z-1}{(z-2)}&\frac{1}{4}\frac{z-1}{z}\\
 0&-1&\frac{1}{z-1}&\frac{z-3}{z-2}&\frac{1}{2}\frac{z+1}{z}
 \end{bmatrix}$ \medskip
 \\
 \bottomrule
\end{tabular}
\end{table}
\vfill 
\end{landscape}

\begin{landscape}
\ 
\vfill 
\begin{table}
\centering
\begin{tabular}{cccc}
\begin{tabular}{cccccc}
 \toprule
 $A_{5}(z)$ & $\lambda=(2)$\\
 \midrule
 ${e}_{1}\mapsto 0$ & ${e}_{2}\mapsto 0$ &
 ${e}_{3}\mapsto 0$& ${e}_{4}\mapsto 0$ &
 ${\sigma}_{3}\mapsto 1$ & ${\sigma}_{4}\mapsto 1 $\\
 \bottomrule
 \end{tabular}
 & & &
\begin{tabular}{cccccc}
 \toprule
 $A_{5}(z)$ & $\lambda=(1,1)$\\
 \midrule
 ${e}_{1}\mapsto0$ & ${e}_{2}\mapsto 0 $ &
 ${e}_{3}\mapsto 0$& ${e}_{4}\mapsto 0 $  &
  ${\sigma}_{3}\mapsto -1 $& ${\sigma}_{4}\mapsto 1 $ \\
 \bottomrule
 \end{tabular}
\end{tabular}
\end{table}
\begin{table}[hhh]
 \begin{tabular}{cccc}
 \toprule
 $A_{6}(z)$ & $\lambda=\emptyset$\\
 \midrule
 ${e}_{1}\mapsto\begin{bmatrix}z&0&0&0&0\\0&0&0&0&0\\ 0&0&z&0&0\\ 0&0&0&0&0\\0&0&0&0&0\\ \end{bmatrix}$& 
 ${e}_{2}\mapsto{\displaystyle\begin{bmatrix}{\displaystyle\frac{1}{z}}&{\textstyle\frac{z-1}{z^2}}   &0&0&0\\1&{\textstyle\frac{z-1}{z}}&0&0&0\\ 0&0&\frac{1}{z}&\frac{z-1}{z^2}&0\\0&0&1&\frac{z-1}{z}&0\\0&0&0&0&0\end{bmatrix}}$ & 
 ${e}_{3}\mapsto
 \begin{bmatrix}z&0&0&0&0
 \\0&0&0&0&0
 \\0&0&0&0&0
 \\0&0&0&\frac{z}{z-1}&\frac{z^2(z-2)}{(z-1)^2}
 \\0&0&0&1&\frac{z(z-2)}{z-1}
 \end{bmatrix}$ &  ${e}_{4}\mapsto\begin{bmatrix}\frac{1}{z}&0&\frac{z-1}{z^2}&0&0\\0&\frac{1}{z}&0&\frac{z-1}{z^2}&0\\1&0&\frac{z-1}{z}&0&0\\0&1&0&\frac{z-1}{z}&0\\0&0&0&0&0\end{bmatrix}$
 \medskip 
 \\
 ${e}_{5}\mapsto\begin{bmatrix}z&0&0&0&0\\0&z&0&0&0\\ 0&0&0&0&0\\0&0&0&0&0\\0&0&0&0&0\end{bmatrix}$ &
 ${\sigma}_{3}\mapsto
 \begin{bmatrix}1&0&0&0&0
 \\0&1&0&0&0\\
 0&0&0&\frac{1}{z}&\frac{z-2}{z-1}\\
 0&0&\frac{z}{z-1}&\frac{z-2}{z-1}&-\frac{z(z-2)}{(z-1)^2}\\
 0&0&1&-\frac{1}{z}&\frac{1}{z-1}\end{bmatrix}$ &
 ${\sigma}_{4}\mapsto
 \begin{bmatrix}1&0&0&0&0\\0&0&0&\frac{1}{z}&\frac{z-2}{z-1}\\0&0&1&0&0\\
 0&\frac{z}{z-1}&0&\frac{z-2}{z-1}&-\frac{z(z-2)}{(z-1)^2}\\0&1&0&-\frac{1}{z}&\frac{1}{z-1}\end{bmatrix}$ 
 & 
 ${\sigma}_{5}\mapsto\begin{bmatrix}1&0&0&0&0\\0&1&0&0&0\\0&0&1&0&0\\0&0&0&1&0\\0&0&0&0&1\end{bmatrix}$\medskip\\ 
 \bottomrule
\end{tabular}
\end{table}
\vfill 
\end{landscape}

\begin{landscape}
\
\vfill 
\begin{tabular}{ll}
 \toprule
 $A_{6}(z)$ & $\lambda=(1)$\\
 \midrule
$e_1 \mapsto  
\begin{bmatrix} z&0&0&0&0&0&0&0&0&0\\
0&0&0&0&0&0&0&0&0&0\\
0&0&z&0&0&0&0&0&0&0\\
0&0&0&0&0&0&0&0&0&0\\
0&0&0&0&0&0&0&0&0&0\\
0&0&0&0&0&z&0&0&0&0\\
0&0&0&0&0&0&0&0&0&0\\
0&0&0&0&0&0&0&0&0&0\\
0&0&0&0&0&0&0&0&0&0\\
0&0&0&0&0&0&0&0&0&0
\end{bmatrix}$
&
$e_2\mapsto \begin{bmatrix}
\frac{1}{z}&{\frac {z-1}{{z}^{2}}}&0&0
&0&0&0&0&0&0\\
1&{\frac {z-1}{z}}&0&0&0&0&0&0&0&0
\\\noalign{\medskip}0&0&\frac{1}{z} &{\frac {z-1}{{z}^{2}}}&0&0&0&0&0&0
\\
0&0&1&{\frac {z-1}{z}}&0&0&0&0&0&0
\\ 
0&0&0&0&0&0&0&0&0&0\\
0&0&0&0&0& \frac{1}{z}&{\frac {z-1}{{z}^{2}}}&0&0&0\\
0&0&0&0&0&1&{
\frac {z-1}{z}}&0&0&0\\
0&0&0&0&0&0&0&0&0&0
\\
0&0&0&0&0&0&0&0&0&0\\
0&0&0&0&0&0&0&0&0&0 
\end{bmatrix} $
\\
$e_3\mapsto \begin{bmatrix} z&0&0&0&0&0&0&0&0&0
\\
0&0&0&0&0&0&0&0&0&0\\
0&0&0&0&0&0&0&0&0&0\\
0&0&0&{\frac {z}{z-1}}&{\frac {{z}^{2}
 \left( z-2 \right) }{ \left( z-1 \right) ^{2}}}&0&0&0&0&0\\
0&0&0&1&{\frac {z \left( z-2 \right) }{z-1}}&0&0&0 &0&0\\
0&0&0&0&0&0&0&0&0&0\\
0&0&0&0&0&0&{\frac {z}{z-1}}&{\frac {{z}^{2} \left( z-2 \right) }{ \left( z-1 \right) ^{2}}}&0&0\\ 0&0&0&0&0&0&1&{\frac {z \left( 
z-2 \right) }{z-1}}&0&0\\\noalign{\medskip}0&0&0&0&0&0&0&0&0&0
\\0&0&0&0&0&0&0&0&0&0
\end{bmatrix}
$
&
$e_4\mapsto \begin{bmatrix}
\frac{1}{z}&0&{\frac {z-1}{{z}^{2}}}&0&0&0&0&0&0&0\\
0&\frac{1}{z}&0&{\frac {z-1}{{z}^{2}}}&0&0&0&0&0&0\\
1&0&{\frac {z-1}{z}}&0&0&0&0&0&0&0
\\
0&1&0&{\frac {z-1}{z}}&0&0&0&0&0&0
\\
0&0&0&0&0&0&0&0&0&0\\
0&0&0&0&0&0
&0&0&0&0\\
0&0&0&0&0&0&0&0&0&0\\
0&0
&0&0&0&0&0&{\frac {z-1}{z \left( z-2 \right) }}&{\frac { \left( z-3
 \right)  \left( z-1 \right) }{ \left( z-2 \right) ^{2}z}}& \frac{1}{2}{
\frac { \left( z-1 \right) ^{2}}{{z}^{2} \left( z-2 \right) }}\\
0&0&0&0&0&0&0&\frac{1}{2}&{\frac{1}{2}\frac {z-3}{z-2}}&{\frac{1}{4}
\frac {z-1}{z}}\\
0&0&0&0&0&0&0&1&{\frac {z-3}{z-2}}
&{\frac{1}{2}\frac {z-1}{z}}
\end{bmatrix}
$

\\
\bottomrule
\end{tabular}
\end{landscape}
\begin{landscape}
\
\vfill
\begin{table}[hhh]
\centering
\begin{tabular}{ll}
 \toprule
 $A_{6}(z)$ & $\lambda=(1)$\\
 \midrule

$e_5\mapsto 
\begin{bmatrix}
0&0&0&0&0&0&0&0&0&0
\\
0&0&0&0&0&0&0&0&0&0\\
0&0&{\frac 
{z}{z-1}}&0&0&{\frac {{z}^{2} \left( z-2 \right) }{ \left( z-1
 \right) ^{2}}}&0&0&0&0\\
 0&0&0&{\frac {z}{z-1}}&0&0&
{\frac {{z}^{2} \left( z-2 \right) }{ \left( z-1 \right) ^{2}}}&0&0&0
\\
0&0&0&0&{\frac {z}{z-1}}&0&0&{\frac {{z}^{2}
 \left( z-2 \right) }{ \left( z-1 \right) ^{2}}}&0&0
\\
0&0&1&0&0&{\frac {z \left( z-2 \right) }{z-1}}&0&0
&0&0\\
0&0&0&1&0&0&{\frac {z \left( z-2 \right) }{z-1
}}&0&0&0\\
0&0&0&0&1&0&0&{\frac {z \left( z-2
 \right) }{z-1}}&0&0\\
0&0&0&0&0&0&0&0&0&0
\\
0&0&0&0&0&0&0&0&0&0 
\end{bmatrix}
$ 
&

$\sigma_3\mapsto \begin{bmatrix} 1&0&0&0&0&0&0&0&0&0
\\
0&1&0&0&0&0&0&0&0&0\\
0&0&0&\frac{1}{z}&{\frac {z-2}{z-1}}&0&0&0&0&0\\
0&0&{\frac {z}{z-1
}}&{\frac {z-2}{z-1}}&-{\frac {z \left( z-2 \right) }{ \left( z-1
 \right) ^{2}}}&0&0&0&0&0\\
 0&0&1&-\frac{1}{z}& \frac{1}{z
-1}&0&0&0&0&0\\
0&0&0&0&0&0&\frac{1}{z}&{
\frac {z-2}{z-1}}&0&0\\
0&0&0&0&0&{\frac {z}{z-1}}&{
\frac {z-2}{z-1}}&-{\frac {z \left( z-2 \right) }{ \left( z-1 \right) 
^{2}}}&0&0\\
0&0&0&0&0&1&-\frac{1}{z}& \frac{1}{z-1}&0&0\\
 0&0&0&0&0&0&0&0&1&0
\\
0&0&0&0&0&0&0&0&0&-1
\end{bmatrix}
$
\\
\bottomrule
\end{tabular}
\end{table}
\vfill 
\end{landscape}
\begin{landscape}
\begin{table}[hhh]
\centering
\begin{tabular}{l}
 \toprule
 $A_{6}(z)$ \qquad $\lambda=(1)$\\
 \midrule
 $\sigma_4\mapsto
\begin{bmatrix} 1&0&0&0&0&0&0&0&0&0
\\
0&0&0&\frac{1}{z}&{\frac {z-2}{z-1}}&0&0&0&0&0
\\
0&0&1&0&0&0&0&0&0&0\\
0&{\frac {z
}{z-1}}&0&{\frac{z-2}{z-1}}&-{\frac {z \left( z-2 \right) }{ \left( z
-1 \right) ^{2}}}&0&0&0&0&0\\
0&1&0&-\frac{1}{z}&
\frac{1}{z-1} &0&0&0&0&0\\
 0&0&0&0&0&1&0&0
&0&0\\
0&0&0&0&0&0&-\frac{1}{z-1}&{\frac 
{z}{ \left( z-1 \right) ^{2}}}&{\frac {z \left( z-3 \right) }{
 \left( z-2 \right)  \left( z-1 \right) }}&-\frac{1}{2}\\
 0&0
&0&0&0&0&{\frac {1}{z \left( z-2 \right) }}&{\frac {{z}^{3}-3{z}^{2}
+2z-1}{z \left( z-2 \right)  \left( z-1 \right) }}&-{\frac {z-3}{
 z\left( z-2 \right) ^{2}}}&\frac{1}{2}{\frac {z-1}{{z}^{2} \left( z-2
 \right) }}\\
 0&0&0&0&0&0&\frac{1}{2}&-\frac{1}{2} \frac{1}{z-2}&\frac{1}{2}{\frac {z-1}{z-2}}&{\frac{1}{4}\frac {z-1}{z}}
\\
0&0&0&0&0&0&-1& \frac{1}{z-1} &{\frac {z-3}{z-2}}&{\frac{1}{2}\frac {z+1}{z}}
\end{bmatrix}
$\medskip\\
$\sigma_5\mapsto\begin{bmatrix}
 0&0&\frac{1}{z}&0&0&{\frac {z-2}{z-1}}
&0&0&0&0\\\noalign{\medskip}0&0&0&\frac{1}{z}&0&0&{\frac {z-2}{z-1}}&0&0&0
\\\noalign{\medskip}{\frac {z}{z-1}}&0&{\frac {z-2}{z-1}}&0&0&-{\frac 
{z \left( z-2 \right) }{ \left( z-1 \right) ^{2}}}&0&0&0&0
\\\noalign{\medskip}0&{\frac {z}{z-1}}&0&{\frac {z-2}{z-1}}&0&0&-{
\frac {z \left( z-2 \right) }{ \left( z-1 \right) ^{2}}}&0&0&0
\\\noalign{\medskip}0&0&0&0&- \frac{1}{z-1}&0&0&{\frac {z}{
 \left( z-1 \right) ^{2}}}&{\frac {z \left( z-3 \right) }{ \left( z-2
 \right)  \left( z-1 \right) }}&-\frac{1}{2}\\\noalign{\medskip}1&0&-\frac{1}{z}&0
&0&\frac{1}{z-1}&0&0&0&0\\\noalign{\medskip}0&1&0&-\frac{1}{z}&0&0& \frac{1}{z-1}&0&0&0\\\noalign{\medskip}0&0&0&0&{
\frac {1}{z \left( z-2 \right) }}&0&0&{\frac {{z}^{3}-3{z}^{2}+2z-
1}{z \left( z-2 \right)  \left( z-1 \right) }}&-{\frac {z-3}{ z\left( 
z-2 \right) ^{2}}}&\frac{1}{2}{\frac {z-1}{{z}^{2} \left( z-2 \right) }}
\\\noalign{\medskip}0&0&0&0&\frac{1}{2}&0&0&-\frac{1}{2}\frac{1}{z-1} &\frac{1}{2}
\,{\frac {z-1}{z-2}}&\frac{1}{4}{\frac {z-1}{z}}\\\noalign{\medskip}0&0&0&0&
-1&0&0& \frac{1}{z-1}&{\frac {z-3}{z-2}}&\frac{1}{2}{\frac {z+1}
{z}}
\end{bmatrix}
$
\medskip\\
\bottomrule
\end{tabular}
\end{table} 
\vfill 
\end{landscape}

\begin{landscape}

\begin{table}[hhh]
\centering
\begin{tabular}{lll}
 \toprule
 $A_{6}(z)$ \qquad $\lambda=(2)$\\
 \midrule
$
e_1\mapsto
\begin{bmatrix}
z&0&0&0&0&0\\ 
0&0&0&0&0&0\\ 
0&0&0&0&0&0\\ 
0&0&0&0&0&0\\ 
0&0&0&0&0&0\\ 
0&0&0&0&0&0
\end{bmatrix}
$ &
$e_2\mapsto 
\begin{bmatrix}
\frac{1}{z}&{\frac {z-1}{{z}^{2}}}&0&0&0&0
\\ 
1&{\frac {z-1}{z}}&0&0&0&0\\ 
0&0&0&0&0&0\\ 
0&0&0&0&0&0\\ 
0&0&0&0&0&0\\ 
0&0&0&0&0&0
\end{bmatrix}
$ &
$e_3\mapsto
\begin{bmatrix}
 0&0&0&0&0&0\\ 
 0&{
\frac {z}{z-1}}&{\frac {{z}^{2}(z-2) }{(z-1) ^{2}}}&0&0&0\\ 
0&1&{\frac {z (z-2)}{z-1}}&0&0&0\\ 
0&0&0&0&0&0\\ 
0&0&0&0&0&0\\ 
0&0&0&0&0&0
\end{bmatrix}
$\\

$
e_4\mapsto
\begin{bmatrix}
0&0&0&0&0&0\\ 
0&0&0&0&0&0\\ 
0&0&{\frac {z-1}{z (z-2) }}&{
\frac {(z-1)( z-3) }{z(z-2)^{2}}}&\frac{1}{2}\,{\frac {( z-1)^{2}}{{z}^{2}(z-2) }}&0\\ 
 0&0&\frac{1}{2}&\frac{1}{2}\,{\frac {z-3}{z-2}}&\frac{1}{4}
\,{\frac {z-1}{z}}&0\\ 
0&0&1&{\frac {z-3}{z-2}}&\frac{1}{2}
\,{\frac {z-1}{z}}&0\\ 
0&0&0&0&0&0
\end{bmatrix}
$
& 
$e_5\mapsto
\begin{bmatrix}
0&0&0&0&0&0\\ 
0&0&0&0&0&0\\ 
0&0&0&\frac{2(z-2)}{z-3}&0&{\frac {( z-1)( z-2)(z-4)}{(z-3)^{2}}}\\ 
0&0&0&0&0&0\\ 
0&0&0&2&0&{\frac {( z-1)(z-4) }{z-3}}
\end{bmatrix}
$
&
 
 $
\sigma_3\mapsto
\begin{bmatrix}
0&\frac{1}{z}&{\frac {z-2}{z-1}}&0&0&0
\\ {\frac {z}{z-1}}&{\frac {z-2}{z-1}}&-{\frac{z(z-2)}{( z-1)^{2}}}&0&0&0\\ 
1&-\frac{1}{z}& \frac{1}{z-1}&0&0&0\\ 
0&0&0&1&0&0\\ 
0&0&0&0&-1&0\\ 
0&0&0&0&0&1
\end{bmatrix}
$
\\

\bottomrule
\end{tabular}
\end{table} 

\vfill

\begin{table}[hhh]
\centering
\begin{tabular}{ll}
 \toprule
 $A_{6}(z)$ \qquad $\lambda=(2)$\\
 \midrule
$\sigma_4\mapsto
\begin{bmatrix}
1&0&0&0&0&0\\ 
0&-
 \frac{1}{z-1}&{\frac {z}{ \left( z-1 \right) ^{2}}}&{
\frac { z\left( z-3 \right) }{ \left( z-1 \right)  \left( -2+z
 \right) }}&-\frac{1}{2}&0\\ 
 0&{\frac {1}{z \left(z-2
 \right) }}&{\frac {{z}^{3}-3{z}^{2}+2z-1}{ \left( z-1 \right) z
 \left( z-2 \right) }}&-{\frac {z-3}{ \left(z-2 \right) ^{2}z}}&\frac{1}{2}
\,{\frac {z-1}{{z}^{2} \left(z-2\right) }}&0\\
0&
\frac{1}{2}&-\frac{1}{2}\, \frac{1}{z-1}&\frac{1}{2}\,{\frac {z-1}{z-2}}&\frac{1}{4}\,{
\frac {z-1}{z}}&0\\ 
0&-1& \frac{1}{z-1}&{
\frac {z-3}{z-2}}&\frac{1}{2}\,{\frac {1+z}{z}}&0\\ 
0&0&0&0&0&1
\end{bmatrix}
\medskip
$ &
$\sigma_5\mapsto
\begin{bmatrix}
1&0&0&0&0&0\\0&1&0&0&0
&0\\ 0&0&-{\frac {1}{ \left(z -2 \right) z}}&{\frac 
{{z}^{2}-2\,z+1}{ \left( z-2 \right) ^{2}z}}&\frac{1}{2}\,{\frac {{z}^{2}-2\,
z+1}{{z}^{2} \left( z-2 \right) }}&{\frac {-4+9\,z-6\,{z}^{2}+{z}^{3}
}{ \left( z-3 \right)  \left( z-2 \right) z}}\\0&0&
\frac{1}{2}\,{\frac {z-1}{z-3}}&\frac{1}{2}\,{\frac {13-11\,z+2\,{z}^{2}}{ \left( z-3
 \right)  \left( z-2 \right) }}&-\frac{1}{4}\,{\frac {z-1}{ \left( z-3
 \right) z}}&-\frac{1}{2}\,{\frac {{z}^{2}-5\,z+4}{ \left( z-3 \right) ^{2}}}
\\ 0&0&1&- \frac{1}{ z-2 }&\frac{1}{2}\,{\frac {2\,
z-1}{z}}&-{\frac {z-4}{z-3}} \\ 0&0&1&- \frac{1}{ z-2 }&-\frac{1}{2z}& \frac{1}{ z-3 }
\end{bmatrix}
$
\\

\bottomrule
\end{tabular}
\end{table} 
\end{landscape}

\begin{landscape}
\begin{table}[hhh]
\centering
\begin{tabular}{lll}
 \toprule
 $A_{6}(z)$ \qquad $\lambda=(1,1)$\\
 \midrule
$e_1\mapsto
\begin{bmatrix}
z&0&0&0&0&0\\ 
0&0&0&0&0&0\\ 
0&0&0&0&0&0\\ 
0&0&0&0&0&0\\ 
0&0&0&0&0&0\\ 
0&0&0&0&0&0
\end{bmatrix}
$
&
$e_2\mapsto
\begin{bmatrix}
\frac{1}{z}&{\frac {z-1}{{z}^{2}}}&0&0&0&0\\ 
1&{\frac {z-1}{z}}&0&0&0&0\\ 
0&0&0&0&0&0\\ 
0&0&0&0&0&0\\ 
0&0&0&0&0&0\\ 
0&0&0&0&0&0
\end{bmatrix}
$
&
$e_3\mapsto
\begin{bmatrix}
0&0&0&0&0&0\\ 
0&{
\frac {z}{z-1}}&{\frac {{z}^{2} \left(z-3 \right) }{ \left( z-1
 \right) ^{2}}}&0&0&0\\ 
 0&1&{\frac { \left(z-3 \right) z}{z-1}}&0&0&0\\ 
 0&0&0&0&0&0
\\ \noalign{\medskip}0&0&0&0&0&0\\ 
0&0&0&0&0&0
\end{bmatrix}
$
\\
$e_4\mapsto
\begin{bmatrix}
0&0&0&0&0&0\\ 
0&0&0&0&0&0\\ 
0&0&{\frac {z-1}{ \left( z-3 \right) z}}&{
\frac { \left( z-3 \right)  \left( z-1 \right) }{ \left(z-2
 \right) ^{2}z}}&\frac{1}{2}\,{\frac { \left( z-1 \right) ^{2}}{{z}^{2}
 \left( z-2 \right) }}&0\\ 
 0&0&\frac{1}{2}&\frac{1}{2}\,{\frac {z-3}{z-3}}&\frac{1}{4}\,{\frac {z-1}{z}}&0\\ 
0&0&1&{\frac {z-3}{z-2}}&\frac{1}{2}\,{\frac {z-1}{z}}&0\\ 
0&0&0&0&0&0
\end{bmatrix}
$
&
$e_5\mapsto
\begin{bmatrix}
0&0&0&0&0&0\\ 
0&0&0&0&0&0\\ 
0&0&0&0&0&0\\ 
0&0&0&0&{\frac {2z}{z-1}}&{\frac {2{z}^{2}
 \left( z-3 \right) }{ \left( z-1 \right) ^{2}}}\\ 
0&0&0&0&1&{\frac { \left(z-3 \right) z}{z-1}}
\end{bmatrix}
\medskip
$ &
$\sigma_3\mapsto
\begin{bmatrix}
0&\frac{1}{z}&{\frac {z-2}{z-1}}&0&0&0
\\ 
{\frac {z}{z-1}}&{\frac {z-2}{z-1}}&-{\frac {
 \left( z-2 \right) z}{ \left( z-1 \right) ^{2}}}&0&0&0
\\ 
1&-\frac{1}{z}& \frac{1}{ z-1} &0&0&0
\\ 
0&0&0&1&0&0\\ 
0&0&0&0&-1&0 \\ 
0&0&0&0&0&-1
\end{bmatrix}
$\\

\bottomrule
\end{tabular}
\end{table}

\vfill 
\begin{table}[hhh]
\centering
\begin{tabular}{ll}
 \toprule
 $A_{6}(z)$ \qquad $\lambda=(1,1)$\\
 \midrule
 
$\sigma_4\mapsto
\begin{bmatrix}
1&0&0&0&0&0\\ 
0&- \frac{1}{z-1}&{\frac {z}{ \left( z-1 \right) ^{2}}}&{
\frac { \left( z-3 \right) z}{ \left(z-2 \right)  \left( z-1
 \right) }}&-\frac{1}{2}&0\\ 
 0&{\frac {1}{ \left(z-2 \right) z}}&{\frac {{z}^{3}-3{z}^{2}+2z-1}{ \left(z-2 \right) 
 \left( z-1 \right) z}}&-{\frac {z-3}{ \left( z-2 \right) ^{2}z}}&\frac{1}{2}{\frac {z-1}{{z}^{2} \left( z-2 \right) }}&0\\ 
0&\frac{1}{2}&-\frac{1}{2}\, \frac{1}{z-1}&\frac{1}{2}\,{\frac {z-1}{z-2}}&\frac{1}{4}\,{
\frac {z-1}{z}}&0\\ 
0&-1& \frac{1}{z-1} &{
\frac {z-3}{z-2}}&\frac{1}{2}\,{\frac {z+1}{z}}&0\\ 
0&0&0&0&0&1
\end{bmatrix}
$ &
$\sigma_5\mapsto
\begin{bmatrix}
 -1&0&0&0&0&0\\ 
 0&-1&0&0&0&0\\ 
 0&0&{\frac {1}{ \left( z-2 \right) z}}&{
\frac {{z}^{2}-4z+3}{ \left(z-2 \right)^{2}z}}&\frac{1}{2}\,{\frac {{z}^{
2}-1}{{z}^{2} \left(z-2\right) }}&{\frac {z-3}{z-2}}
\\ 
0&0&\frac{1}{2}&-\frac{1}{2}\, \frac{1}{z-2}&\frac{1}{4}\,{
\frac {2\,z-1}{z}}&-\frac{1}{2}\,{\frac {z}{z-1}}\\ 
0&0&{
\frac {z+1}{z-1}}&{\frac {2{z}^{2}-7z+3}{ \left(z-2 \right) 
 \left( z-1 \right) }}&-\frac{1}{2}\,{\frac {1+z}{ \left( z-1 \right) z}}&-{
\frac { z\left(z-3\right) }{ \left( z-1 \right) ^{2}}}
\\ 
0&0&1&- \frac{1}{z-2}&-\frac{1}{2z}&
\frac{1}{ z-1}
\end{bmatrix}
$

\medskip\\
\bottomrule
\end{tabular}
\end{table}

\end{landscape}
 
\newpage

\vfill

\begin{table}[hhh]
\centering
\begin{tabular}{cccc}
 \toprule
 $A_{6}(z)$ & $\lambda=(3)$\\
 \midrule
 $e_1\mapsto 0$ & $e_2\mapsto 0$ & $e_3\mapsto 0$ & $e_4\mapsto 0$\\
$e_5\mapsto 0$ & $\sigma_3\mapsto 1$ &$\sigma_4\mapsto 1$ &$\sigma_5\mapsto 1$
\medskip\\
\bottomrule
\end{tabular}
\end{table}

\begin{table}[hhh]
\centering
\begin{tabular}{cccc}
 \toprule
 $A_{6}(z)$ & $\lambda=(2,1)$ \\
 \midrule
 $e_1\mapsto 
 \begin{bmatrix}
 0 & 0 \\ 0 & 0
 \end{bmatrix}$ & 
 $e_2\mapsto  
 \begin{bmatrix}
 0 & 0 \\ 0 & 0
 \end{bmatrix}$ & 
 $e_3\mapsto  
 \begin{bmatrix}
 0 & 0 \\ 0 & 0
 \end{bmatrix}$ & 
 $e_4\mapsto  
 \begin{bmatrix}
 0 & 0 \\ 0 & 0
 \end{bmatrix}$\medskip \\
 $e_5\mapsto  
 \begin{bmatrix}
 0 & 0 \\ 0 & 0
 \end{bmatrix}$ & 
 $\sigma_3\mapsto  
 \begin{bmatrix}
 1 & 0 \\ 0 & -1
 \end{bmatrix}$ &
 $\sigma_4\mapsto  
 \begin{bmatrix}
 1 & 0 \\ 0 & 1
 \end{bmatrix}$ &
 $\sigma_5\mapsto 
  \begin{bmatrix}
 -\frac{1}{2} & \frac{3}{2} \\ 1 & \frac{1}{2}
 \end{bmatrix}$
\medskip\\
\bottomrule
\end{tabular}
\end{table}

\begin{table}[hhh]
\centering
\begin{tabular}{cccc}
 \toprule
 $A_{6}(z)$ & $\lambda=(1,1,1)$\\
 \midrule
 $e_1\mapsto 0$ & $e_2\mapsto 0$ & $e_3\mapsto 0$ & $e_4\mapsto 0$\\
$e_5\mapsto 0$ & $\sigma_3\mapsto -1$ &$\sigma_4\mapsto 1$ &$\sigma_5\mapsto -1$
\medskip\\
\bottomrule
\end{tabular}
\end{table}

%###################################

\bibliographystyle{amsplain}

\bibliography{seminorm}

\def\cprime{$'$}
\providecommand{\bysame}{\leavevmode\hbox to3em{\hrulefill}\thinspace}
\providecommand{\MR}{\relax\ifhmode\unskip\space\fi MR }
% \MRhref is called by the amsart/book/proc definition of \MR.
\providecommand{\MRhref}[2]{%
  \href{http://www.ams.org/mathscinet-getitem?mr=#1}{#2}
}
\providecommand{\href}[2]{#2}
\begin{thebibliography}{10}

\bibitem{MR2235339}
Susumu Ariki, Andrew Mathas, and Hebing Rui, \emph{Cyclotomic {N}azarov-{W}enzl
  algebras}, Nagoya Math. J. \textbf{182} (2006), 47--134. \MR{2235339}

\bibitem{MR1866492}
Anna Beliakova and Christian Blanchet, \emph{Skein construction of idempotents
  in {B}irman-{M}urakami-{W}enzl algebras}, Math. Ann. \textbf{321} (2001),
  no.~2, 347--373. \MR{1866492}

\bibitem{MR2737787}
Jonathan Comes and Victor Ostrik, \emph{On blocks of {D}eligne's category
  {$\underline{\rm Re}{\rm p}(S\sb t)$}}, Adv. Math. \textbf{226} (2011),
  no.~2, 1331--1377. \MR{2737787}

\bibitem{MR1779601}
William~F. Doran, IV and David~B. Wales, \emph{The partition algebra
  revisited}, J. Algebra \textbf{231} (2000), no.~1, 265--330. \MR{1779601}

\bibitem{MR2811310}
James East, \emph{Generators and relations for partition monoids and algebras},
  J. Algebra \textbf{339} (2011), 1--26. \MR{2811310}

\bibitem{MR3035512}
John Enyang, \emph{Jucys--{M}urphy elements and a presentation for partition
  algebras}, J. Algebraic Combin. \textbf{37} (2013), no.~3, 401--454.
  \MR{3035512}

\bibitem{EG:2012}
John Enyang and Frederick~M. Goodman, \emph{Cellular bases for algebras with a
  {J}ones basic construction}, preprint, 2012.

\bibitem{MR2794027}
Frederick~M. Goodman and John Graber, \emph{Cellularity and the {J}ones basic
  construction}, Adv. in Appl. Math. \textbf{46} (2011), no.~1-4, 312--362.
  \MR{2794027}

\bibitem{MR2774622}
\bysame, \emph{On cellular algebras with {J}ucys {M}urphy elements}, J. Algebra
  \textbf{330} (2011), 147--176. \MR{2774622}

\bibitem{MR1376244}
J.~J. Graham and G.~I. Lehrer, \emph{Cellular algebras}, Invent. Math.
  \textbf{123} (1996), no.~1, 1--34. \MR{1376244}

\bibitem{MR2143201}
Tom Halverson and Arun Ram, \emph{Partition algebras}, European J. Combin.
  \textbf{26} (2005), no.~6, 869--921. \MR{2143201}

\bibitem{MR541676}
G.~D. James and G.~E. Murphy, \emph{The determinant of the {G}ram matrix for a
  {S}pecht module}, J. Algebra \textbf{59} (1979), no.~1, 222--235. \MR{541676}

\bibitem{MR1317365}
V.~F.~R. Jones, \emph{The {P}otts model and the symmetric group}, Subfactors
  ({K}yuzeso, 1993), World Sci. Publ., River Edge, NJ, 1994, pp.~259--267.
  \MR{1317365}

\bibitem{MR2165457}
Alexander Kleshchev, \emph{Linear and projective representations of symmetric
  groups}, Cambridge Tracts in Mathematics, vol. 163, Cambridge University
  Press, Cambridge, 2005. \MR{2165457}

\bibitem{MR2262344}
Masashi Kosuda, \emph{Irreducible representations of the party algebra}, Osaka
  J. Math. \textbf{43} (2006), no.~2, 431--474. \MR{2262344}

\bibitem{MR2583260}
\bysame, \emph{Partition algebra, its characterization and representations},
  Ryukyu Math. J. \textbf{22} (2009), 1--29. \MR{2583260}

\bibitem{MR1427801}
Robert Leduc and Arun Ram, \emph{A ribbon {H}opf algebra approach to the
  irreducible representations of centralizer algebras: the {B}rauer,
  {B}irman-{W}enzl, and type {A} {I}wahori-{H}ecke algebras}, Adv. Math.
  \textbf{125} (1997), no.~1, 1--94. \MR{1427801}

\bibitem{MR1103994}
Paul Martin, \emph{Potts models and related problems in statistical mechanics},
  Series on Advances in Statistical Mechanics, vol.~5, World Scientific
  Publishing Co. Inc., Teaneck, NJ, 1991. \MR{1103994}

\bibitem{MR1265453}
\bysame, \emph{Temperley-{L}ieb algebras for nonplanar statistical
  mechanics---the partition algebra construction}, J. Knot Theory Ramifications
  \textbf{3} (1994), no.~1, 51--82. \MR{1265453}

\bibitem{MR1768036}
\bysame, \emph{The partition algebra and the {P}otts model transfer matrix
  spectrum in high dimensions}, J. Phys. A \textbf{33} (2000), no.~19,
  3669--3695. \MR{1768036}

\bibitem{MR1711316}
Andrew Mathas, \emph{Iwahori-{H}ecke algebras and {S}chur algebras of the
  symmetric group}, University Lecture Series, vol.~15, American Mathematical
  Society, Providence, RI, 1999. \MR{1711316}

\bibitem{MR2414949}
\bysame, \emph{Seminormal forms and {G}ram determinants for cellular algebras},
  J. Reine Angew. Math. \textbf{619} (2008), 141--173, With an appendix by
  Marcos Soriano. \MR{2414949}

\bibitem{MR1398116}
Maxim Nazarov, \emph{Young's orthogonal form for {B}rauer's centralizer
  algebra}, J. Algebra \textbf{182} (1996), no.~3, 664--693. \MR{1398116}

\bibitem{MR1444315}
Arun Ram, \emph{Seminormal representations of {W}eyl groups and
  {I}wahori-{H}ecke algebras}, Proc. London Math. Soc. (3) \textbf{75} (1997),
  no.~1, 99--133. \MR{1444315}

\bibitem{MR2369064}
Hebing Rui and Mei Si, \emph{Discriminants of {B}rauer algebras}, Math. Z.
  \textbf{258} (2008), no.~4, 925--944. \MR{2369064}

\bibitem{MR1109059}
A.~M. Vershik, \emph{Local stationary algebras}, Algebra and analysis
  ({K}emerovo, 1988), Amer. Math. Soc. Transl. Ser. 2, vol. 148, Amer. Math.
  Soc., Providence, RI, 1991, pp.~1--13. \MR{1109059}

\bibitem{MR2050688}
A.~M. Vershik and A.~Yu. Okun{\cprime}kov, \emph{A new approach to
  representation theory of symmetric groups. {II}}, Zap. Nauchn. Sem.
  S.-Peterburg. Otdel. Mat. Inst. Steklov. (POMI) \textbf{307} (2004),
  no.~Teor. Predst. Din. Sist. Komb. i Algoritm. Metody. 10, 57--98, 281.
  \MR{2050688}

\bibitem{MR1711582}
Changchang Xi, \emph{Partition algebras are cellular}, Compositio Math.
  \textbf{119} (1999), no.~1, 99--109. \MR{1711582}

\bibitem{Yo:1932}
Alfred Young, \emph{On quantitative substitutional analysis, {V}{I}}, Proc.
  Lond. Math. Soc. \textbf{34} (1932), no.~2, 196--230.

\end{thebibliography}

%##########################################################
%#
%#END DOCUMENT
%#
%##########################################################

\end{document}